\documentclass[11pt]{amsart}
\usepackage{amsmath,amssymb,amsthm,amsxtra,graphics,enumerate} 
\usepackage{stmaryrd}
\usepackage{times}
\usepackage[margin=1.3in]{geometry}
\usepackage[linktocpage=true, hidelinks, colorlinks=false, allcolors=blue]{hyperref}
\usepackage{comment}
\usepackage{scalerel}
\usepackage{mathrsfs}
\usepackage{accents}

\newtheorem{thm}{Theorem}[section]

\newtheorem{lemma}[thm]{Lemma}
\newtheorem{prop}[thm]{Proposition}

\theoremstyle{definition}
\newtheorem{defn}[thm]{Definition}

\newtheorem{probnumber}{Problem}
\newtheorem{thm2}{Theorem}

\newcommand\Bcal{\mathcal{B}}

\newcommand\Fcal{\mathcal{F}}

\newcommand\Lcal{\mathcal{L}}

\newcommand\Pcal{\mathcal{P}}

\newcommand\Ucal{\mathcal{U}}

\newcommand{\p}{\mathbb{P}}

\newcommand{\dom}{\mathrm{dom}}

\newcommand{\res}{\upharpoonright}
\newcommand\h{\text{ht}}

\begin{document}

\title[Countryman Lines and the Continuum Hypothesis]{Countryman Lines and the Continuum Hypothesis}

\author{John Krueger and Eduardo Martinez Mendoza}

\address{John Krueger, Department of Mathematics, 
	University of North Texas,
	1155 Union Circle \#311430,
	Denton, TX 76203, USA}
\email{john.krueger@unt.edu}

\address{Eduardo Martinez Mendoza, Department of Mathematics, 
	University of North Texas,
	1155 Union Circle \#311430,
	Denton, TX 76203, USA}
\email{eduardomartinezmendoza@my.unt.edu}

\subjclass{Primary 03E35; Secondary 03E05, 03E50, 06A05}

\keywords{Aronszajn tree, Continuum Hypothesis, Countryman line, promises}

\date{July 25, 2026}

\begin{abstract}
	We explore the prospect of basis-like results for the class of Aronszajn lines 
	which are consistent with the Continuum Hypothesis (\textsf{CH}). 
	In particular, we prove that each of the following statements is consistent with 
	\textsf{CH}: Any two Countryman lines contain isomorphic or 
	anti-isomorphic uncountable suborders; 
	for any coherent Aronszajan tree 
	$T \subseteq {}^{\underaccent{\breve}{\omega}_1} \omega$, 
	the filter $\Ucal(T)$ is an ultrafilter. 
	The first result confirms a conjecture of Shelah in the context of \textsf{CH} which 
	was previously shown to follow from the Proper Forcing Axiom. 
	On the other hand, the weak diamond principle $2^\omega < 2^{\omega_1}$ implies that 
	there does not exist a two element basis for the Countryman lines.
\end{abstract}

\maketitle

\section{Introduction}

An \emph{Aronszajn line} is an uncountable linear order which contains no 
uncountable well-ordered or conversely well-ordered suborder nor an uncountable 
separable suborder. 
Aronszajn lines are closely connected to the idea of an \emph{Aronszajn tree}, which 
is a tree of height $\omega_1$ with countable levels and no uncountable chain. 
An uncountable linear order is an Aronszajn line iff it is the lexicographical 
ordering of some Aronszajn tree (\cite{baumgartner2, todortreeslinear}). 
A distinguished subclass of the Aronszajn lines are the \emph{Countryman lines}, which 
are uncountable linear orders whose square is a countable union of chains 
in the componentwise order. 
The existence of Countryman lines was proved by Shelah \cite{shelahcountryman}, 
solving a problem of Countryman \cite{countryman}. 
Countryman lines are particularly relevant to basis problems for uncountable linear 
orders.\footnote{A \emph{basis} for a class $\Lcal$ of linear orders is a subclass 
$\Bcal \subseteq \Lcal$ such that every member of $\Lcal$ contains a suborder 
which is isomorphic to a member of $\Bcal$.} 
Since a Countryman line cannot contain uncountable anti-isomorphic suborders, any basis 
for the Aronszajn lines must have size at least two. 
Shelah asked whether it is consistent that every Aronszajn 
line contains a Countryman line, a statement known as \emph{Shelah's conjecture}. 
Moore \cite{linear_basis} confirmed this conjecture by proving that the Proper Forcing Axiom 
implies that there exists a two element basis for the Aronszajn lines consisting of a 
Countryman line and its converse.\footnote{More broadly, when combined with work of 
Baumgartner \cite{baumgartner}, the Proper Forcing Axiom implies the existence of a five element 
basis for the uncountable linear orders.}

A notable feature of the two element basis theorem for Aronszajn lines, 
including both the work of Moore \cite{linear_basis} 
as well as the reduction of the problem to Shelah's conjecture (\cite{AS, lipschitz}), 
is that every forcing involved in the proof adds reals. 
As a result, the methods used to prove this theorem 
are antithetical to the construction of models of \textsf{CH}. 
This fact raises the question whether the violation of \textsf{CH} is essential, 
or if it is just a consequence of the specific techniques used. 
Relevant to this question is work of Abraham and Shelah \cite{AS}, who 
used the weak diamond principle $2^\omega < 2^{\omega_1}$, 
which follows from \textsf{CH}, to derive anti-basis results for Aronszajn trees. 
For instance, they showed that the weak diamond principle 
implies that for every Aronszajn tree $T$, 
there is an Aronzajn tree $U$ 
such that $T$ does not club embed into $U$. 
Other authors have used their method to derive related 
consequences of the weak diamond principle, including:
\begin{itemize}
	\item There does not 
	exist a two element basis for the Aronszajn lines (\cite[Cor.\ 4.11(a)]{baumgartner2}). 
	\item There exists an Aronszajn line which is the lexicographical ordering of a coherent 
	Aronszajn tree which is not \emph{minimal}, in the sense that it contains an uncountable 
	suborder into which it does not embed 
	(P. Larson, unpublished (see \cite[Sec.\ 8]{moorestructural})). 
	\item There does not exist a two element basis for the Countryman lines (\cite{emmdiss}). 
\end{itemize}
This situation suggests the question of whether, and to what extent, basis-like statements 
for the Aronszajn and Countryman lines are consistent with \textsf{CH}. 
To simplify language, we introduce some terminology.  
Uncountable linear orders $L$ and $M$ are said to be \emph{near} if there exist 
uncountable suborders $L_0 \subseteq L$ and $M_0 \subseteq M$ such that $L_0$ and $M_0$ 
are isomorphic, and they are \emph{compatible} if there exist 
uncountable suborders $L_0 \subseteq L$ and $M_0 \subseteq M$ such that $L_0$ and $M_0$ 
are either isomorphic or anti-isomorphic. 
Since the existence of a two element basis for the Countryman lines is inconsistent 
with \textsf{CH}, we focus on formally weaker statements 
whose consistency was asked in the early literature on the subject:
\begin{enumerate}
	\item[(A)] (Shelah's conjecture) Every Aronszajn line contains a 
	Countryman line (\cite[Conjecture (A)]{shelahcountryman}).
	\item[(B)] Any two Countryman lines are compatible 
	(\cite[Conjecture (B)]{shelahcountryman}).
	\item[(C)] Any two Aronszajn lines are compatible 
	(\cite{todortreeslinear, baumgartner2}).
\end{enumerate}
The consistency of (B) was proven by {Todor\v{c}evi\'{c}} using the Proper Forcing 
Axiom for posets of size at most $\aleph_1$. 
Assuming that $\textsf{MA}_{\omega_1}$ holds and any two normal 
Aronszajn trees are club isomorphic, if we let $C$ denote any of the Countryman lines 
$C(\rho_0)$, $C(\rho_1)$, $C(\rho_3)$, or $C(\rho)$ defined from 
characteristics of walks on ordinals, 
then $C$ and its converse $C^*$ constitute a two element basis for the Countryman 
lines.\footnote{$\textsf{MA}_{\omega_1}$ is the version of Martin's axiom which asserts 
the existence of a filter on any c.c.c.\ forcing which meets a given family of 
at most $\omega_1$-many dense sets.} 
This two element basis theorem for the Countryman lines is derived from the 
the conjunction of two statements: 
(i) $C$ is minimal, that is, it embeds into all of its uncountable suborders, and 
(ii) every Countryman line is compatible with $C$ 
(see e.g.\ \cite[Cor.\ 2.1.13]{todorbook}).\footnote{As pointed out to 
the first author by Justin Moore, there is an error in \cite{todorbook} whereby it is 
asserted that these statements follow only from $\textsf{MA}_{\omega_1}$. 
Statement (ii) needs the additional assumption about club isomorphisms.} 

The conjunction of (i) and (ii) is inconsistent with \textsf{CH}, but we can ask 
whether versions of (i) or (ii) are consistent with \textsf{CH} individually. 
For (i), Baumgartner \cite[Th.\ 4.15]{baumgartner2} proved that $\Diamond^+$ implies that 
there exists a minimal Aronszajn line. 
Baumgartner's example is a Suslin line (that is, a non-separable 
linear order which 
has no uncountable family of pairwise 
disjoint non-empty open intervals), and hence is not Countryman. 
More recently, Cummings, Eisworth, and Moore \cite{cem} proved that $\Diamond$ 
implies the existence of a minimal Countryman line.
Concerning (ii), Shelah \cite{shelahcountryman} observed that $\Diamond$ implies the existence of 
$2^{\omega_1}$ many pairwise non-compatible Countryman lines. 
In contrast, in this article we prove:

\begin{thm2}
	It is consistent that \textsf{CH} holds 
	and any two Countryman lines are compatible.
\end{thm2}

Note that (C) follows from (A) and (B), so it is natural to approach the consistency 
of (C) by way of Shelah's conjecture. 
The \emph{coloring axiom for trees} (\textsf{CAT}) is the statement 
that for any Aronszajn tree $T$ and for any $K \subseteq T$, there exists an uncountable 
antichain of $T$ such that all meets of pairs of elements in the antichain lie in $K$ or 
all lie in $T \setminus K$. 
When this property holds for a specific Aronszajn tree $T$, we say that \textsf{CAT} 
holds for $T$. 
The coloring axiom for trees was introduced by 
Abraham and Shelah \cite{AS}, who stated (without proof) that, 
under additional hypotheses, \textsf{CAT} is equivalent to Shelah's conjecture. 
Later, a proof of this equivalence 
appeared in \cite[Prop.\ 8.7]{lipschitz}, where it is shown in particular that 
\textsf{CAT} implies Shelah's conjecture under the assumptions:
\begin{enumerate}
	\item $\textsf{MA}_{\omega_1}$;
	\item Any two normal Aronszajn trees are club isomorphic.
\end{enumerate}
Each of these hypotheses implies the failure of \textsf{CH}. 
However, an analysis of the proof of \cite[Prop.\ 8.7]{lipschitz} reveals that, with 
a mild adjustment of the argument, \textsf{CAT} implies Shelah's conjecture 
under the weaker assumptions:
\begin{enumerate}
	\item[($1'$)] Any normal infinitely splitting Aronszajn tree contains a 
	normal downwards closed binary subtree.
	\item[($2'$)] Any two normal Aronszajn trees contain normal downwards closed 
	subtrees which are club isomorphic.
\end{enumerate}
The consistency of ($2'$) with \textsf{CH} was pointed out 
by Abraham and Shelah \cite{AS} (without proof), 
and the consistency of ($1'$) with \textsf{CH} was proven in \cite{jk45}.  
These observations suggest that if \textsf{CH} 
is consistent with \textsf{CAT}, then it is likely that \textsf{CH} is also consistent 
with Shelah's conjecture.

Moore's proof of the consistency of Shelah's conjecture involved showing that 
\textsf{CAT} holds for a particular coherent Aronszajn tree and was inspired in part by 
an earlier and related result of {Todor\v{c}evi\'{c}} (\cite{todorbook}). 
Recall that a tree $T$ is \emph{coherent} if it is a downwards closed subtree of 
${}^{\underaccent{\breve}{\omega}_1} \omega$ 
such that any two elements of the tree agree as functions on 
all but finitely many members of the intersection of their 
domains.\footnote{For any cardinal $\kappa$ and set $X$, 
${}^{\underaccent{\breve}{\kappa}} X$ is the 
collection of all functions of the form 
$f : \alpha \to X$, where $\alpha < \kappa$.} 
For a given coherent Aronszajn tree $T$, define $\Ucal(T)$ to be the collection of sets 
$X \subseteq \omega_1$ for which there exists an uncountable antichain $A \subseteq T$ 
such that for all distinct $x, y \in A$, the height of the meet of $x$ and $y$ is in $X$. 
Assuming that $T$ is \emph{non-Suslin} (that is, any uncountable subset of $T$ 
contains an uncountable antichain), $\Ucal(T)$ is a filter. 
{Todor\v{c}evi\'{c}} \cite[Th.\ 4.1.13]{todorbook} proved that 
$\textsf{MA}_{\omega_1}$ implies that $\Ucal(T)$ is an 
ultrafilter.\footnote{The weaker hypothesis that 
the c.c.c.\ property of posets is productive also implies that $\Ucal(T)$ 
is an ultrafilter. 
This hypothesis is inconsistent with \textsf{CH} (\cite{todorentangled}).}  
The statement that $\Ucal(T)$ is an ultrafilter is a 
weak form of \textsf{CAT} in which we restrict the 
required dichotomy to sets $K \subseteq T$ of the form $T \res X$ 
for some $X \subseteq \omega_1$. 
This connection of $\Ucal(T)$ with Shelah's conjecture motivates the second main theorem 
of the article.

\begin{thm2}
	It is consistent with \textsf{CH} that for any coherent Aronszajn tree 
	$T \subseteq {}^{\underaccent{\breve}{\omega}_1} \omega$, $\Ucal(T)$ is an ultrafilter.
\end{thm2}

A number of open problems are suggested by the above discussion, which we list 
at the end of the article. 
For now we highlight one of the strongest possibilities, which can be thought of 
as an analogue of the two element basis theorem for Aronszajn lines in the 
context of \textsf{CH}.

\begin{probnumber} \label{problem 1}
	Is it consistent with \textsf{CH} that for any Aronszajn lines $L$ and $J$, 
	there exists a Countryman line $C$ such that each of $L$ and $J$ contain 
	either a copy of $C$ or its converse $C^*$?
\end{probnumber}

One scenario in which a positive solution to Problem 1 
could be realized would be if a model of Theorem 1 is combined with 
the family of all uncountable suborders of $C(\rho_3)$ and $C(\rho_3)^*$ 
constituting a basis for the Aronszajn lines.

We give a short overview of the article. 
In Section 2, we provide some background information about trees. 
In Sections 3--9 we prove the first main theorem, and in Sections 10--16 
we prove the second. 
These two parts can be read independently of each other. 
In Section 17, we state further results and some open problems. 

\section{Background on Trees and Linear Orders}

We assume that the reader is familiar with trees and lexicographical orderings of trees, 
but for clarity we review some prominent ideas, facts, and notation which we use. 
For more background see \cite{todortreeslinear}, 
and for notation see \cite[Sec.\ 1]{jk45}. 
If $L$ is a linear order, $L^*$ denotes the \emph{converse} of $L$ which is obtained 
by reversing the order of elements of $L$ (that is, $x <_L y$ iff $y <_{L^*} x$). 
Linear orders $L$ and $M$ are \emph{anti-isomorphic} if $L^*$ and $M$ are isomorphic. 
An uncountable linear order $L$ is a \emph{Countryman line} if $L^2$ is a countable 
union of chains in the componentwise order. 
If $L$ is a Countryman line, then: (1) $L$ is an Aronszajn line, 
(2) $L$ does not contain anti-isomorphic uncountable suborders, and 
(3) for all $2 \le n < \omega$, $L^n$ is a countable 
union of chains (\cite[Th.\ 5.4]{todortreeslinear}). 
Note that any uncountable suborder of a Countryman line is also a Countryman line, and 
if $L$ is a Countryman line then so is $L^*$.

An $\omega_1$-tree (that is, a tree of height $\omega_1$ with countable levels) is 
\emph{normal} if (1) it has a root, (2) every element has incomparable elements above it, 
(3) is Hausdorff (that is, 
distinct elements of the same limit height have different sets of predecessors), and 
(4) every element has uncountably many elements above it. 
In this article, we are concerned almost exclusively with $\omega_1$-trees 
which are uncountable downwards closed subtrees of 
$({}^{\underaccent{\breve}{\omega}_1} \omega,\subsetneq)$. 
For any such tree, (1) and (3) are automatic, and (2) follows from (4) if it is Aronszajn. 
Any $\omega_1$-tree contains an uncountable downwards closed subtree which satisfies (4). 
Hence, any downwards closed Aronszajn subtree of 
${}^{\underaccent{\breve}{\omega}_1} \omega$ contains 
a downwards closed normal subtree.

Suppose that $T$ is an $\omega_1$-tree. 
For any $n < \omega$ and set $X$, $X^n$ 
denotes the $n$-dimensional Cartesian product of $X$. 
Let $T^{\otimes n}$ denote the subset of $T^n$ consisting of tuples whose elements all 
have the same height in $T$. 
The set $T^{\otimes n}$ ordered componentwise is itself a tree. 
Similarly, if $T$ and $S$ are $\omega_1$-trees, then $T \otimes S$ 
is the tree of pairs $(x,y)$, where $x \in T_\alpha$ and $y \in S_\alpha$ for 
some $\alpha < \omega_1$. 
For any $x \in T$, we let $T_x = \{ y \in T : x \le_T y \}$.
An element $x \in T$ is said to \emph{split} if it has at least two immediate successors.
An $\omega_1$-tree is \emph{non-Suslin} if every uncountable subset of it 
contains an uncountable antichain (for Aronszajn trees, 
this property is equivalent to the tree not containing 
a downwards closed Suslin subtree). 
For each $x \in T$ and $\xi \le \h_T(x)$, let $x \res \xi$ denote the unique 
member of $T$ with height $\xi$ such that $x \res \xi \le_T x$. 
If $X \subseteq T_\alpha$ for some $\alpha$ and $\xi \le \alpha$, we say that 
$X$ has \emph{unique drop-downs to $\xi$} if the function which maps any $x \in X$ 
to $x \res \xi$ is injective on $X$.

Assume that $T$ is a downwards closed uncountable subtree of 
${}^{\underaccent{\breve}{\omega}_1} \omega$. 
The height of any element of $T$ is equal to its domain. 
So if $\xi \le \h_T(x)$, then $x \res \xi$ as defined above is the same as the 
restriction of the function $x$ to $\xi$. 
We say that $T$ is \emph{coherent} if any two elements of $T$ agree as functions 
on all but finitely many elements of the intersection of their domain. 
For incomparable $x, y \in T$, let $\Delta(x,y)$ denote the largest ordinal 
$\gamma < \min(\h_T(x),\h_T(y))$ such $x \res \gamma = y \res \gamma$. 
Equivalently, $\gamma$ is the least ordinal such that $x(\gamma) \ne y(\gamma)$, 
and $\gamma$ is the height of the meet $x \wedge y$ of $x$ and $y$. 
For an antichain $A \subseteq T$, let $\Delta (A) = \{ \Delta(x,y) : x \ne y, \ x, y \in A \}$. 
Define $\Ucal(T)$ to be the collection of all sets 
$X \subseteq \omega_1$ such that for some uncountable 
antichain $A \subseteq T$, $\Delta(T) \subseteq X$. 
If $T$ is coherent, Aronszajn, and non-Suslin, then $\Ucal(T)$ is a uniform filter on $\omega_1$.  
The \emph{lexicographical ordering of $T$} is the linear order $L$ 
defined by letting $x <_L y$ if either $x <_T y$, or else $x$ and $y$ are incomparable 
and $x(\Delta(x,y)) < y(\Delta(x,y))$. 
If $T$ is Aronszajn, then the lexicographical ordering of $T$ is an Aronszajn line 
(for a converse, see the beginning of Section \ref{Compatibility of Countryman Lines} below). 
Note that if $x$ and $y$ are incomparable, $x <_T x^+$, and $y <_T y^+$, 
then $\Delta(x,y) = \Delta(x^+,y^+)$. 
This fact easily implies the following useful lemma which we use frequently.

\begin{lemma} \label{easy lex lemma}
	Let $T$ be a downwards closed subtree of 
	${}^{\underaccent{\breve}{\omega}_1} \omega$ 
	with lexicographical ordering $L$. 
	If $a_0 <_T a_1$, $b_0 <_T b_1$, and $a_0$ and $b_0$ are incomparable in $T$, 
	then $a_0 <_L b_0$ iff $a_1 <_L b_1$. 
	In particular, for all incomparable $a, b \in T$, 
	for all $\Delta(a,b) < \xi \le \h_T(a)$, and 
	for all $\Delta(a,b) < \gamma \le \h_T(b)$, 
	$a <_L b$ iff $a \res \xi <_L b \res \gamma$.
\end{lemma}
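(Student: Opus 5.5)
We first prove the key fact stated just before the lemma: if $x$ and $y$ are incomparable, $x \le_T x^+$, and $y \le_T y^+$, then $\Delta(x,y) = \Delta(x^+,y^+)$. Let $\gamma = \Delta(x,y)$, so $\gamma < \min(\h_T(x),\h_T(y))$. Since $x \subseteq x^+$ and $y \subseteq y^+$ as functions, we have $x^+ \res \gamma = x \res \gamma = y \res \gamma = y^+ \res \gamma$. We also have $x^+(\gamma) = x(\gamma) \ne y(\gamma) = y^+(\gamma)$. Hence $\gamma$ is the least ordinal at which $x^+$ and $y^+$ differ, which gives $\Delta(x^+,y^+) = \gamma$. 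In particular $x^+$ and $y^+$ are incomparable, because they disagree at a point of their common domain.

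For the first assertion, suppose $a_0 <_T a_1$, $b_0 <_T b_1$, and $a_0$, $b_0$ are incomparable. By the fact above, $a_1$ and $b_1$ are incomparable and $\Delta(a_1,b_1) = \Delta(a_0,b_0) =: \gamma$. Moreover $a_1(\gamma) = a_0(\gamma)$ and $b_1(\gamma) = b_0(\gamma)$. Since both pairs are incomparable, the lexicographical comparison of each pair is decided by the values at $\gamma$ alone. So $a_0 <_L b_0$ iff $a_0(\gamma) < b_0(\gamma)$ iff $a_1(\gamma) < b_1(\gamma)$ iff $a_1 <_L b_1$. The same argument holds if $a_0 = a_1$ or $b_0 = b_1$, so strictness plays no role.

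For the "in particular" clause, let $a$, $b$ be incomparable with $\gamma_0 = \Delta(a,b)$, and take $\gamma_0 < \xi \le \h_T(a)$ and $\gamma_0 < \gamma \le \h_T(b)$. Because $T$ is downwards closed, $a \res \xi$ and $b \res \gamma$ lie in $T$, with $a \res \xi \le_T a$ and $b \res \gamma \le_T b$. Since $\gamma_0$ lies in both domains and $a(\gamma_0) \ne b(\gamma_0)$, the restrictions $a \res \xi$ and $b \res \gamma$ are incomparable. Applying the first part with $a_0 = a \res \xi$, $a_1 = a$, $b_0 = b \res \gamma$, $b_1 = b$ yields the claim.

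The only point needing care is incomparability of the restricted elements. This is exactly why we require $\xi$ and $\gamma$ to be strictly greater than $\Delta(a,b)$: the disagreement point must survive the restriction.
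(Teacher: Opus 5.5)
Your proof is correct and follows the paper's route: the paper observes that $\Delta(x,y) = \Delta(x^+,y^+)$ whenever $x$ and $y$ are incomparable and $x <_T x^+$, $y <_T y^+$, and says the lemma follows easily from this. You supply the routine details the paper leaves implicit, including the incomparability of $a \res \xi$ and $b \res \gamma$ and the non-strict case $\xi = \h_T(a)$ or $\gamma = \h_T(b)$.
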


The following lemma has a straightforward proof.

\begin{lemma} \label{club lemma}
	Suppose that $T$ is a normal Aronszajn tree which is a downwards closed 
	subtree of ${}^{\underaccent{\breve}{\omega}_1} \omega$. 
	Let $L$ be the lexicographical ordering of $T$. 
	Then there exists a club $E \subseteq \omega_1$ such that for all $\eta \in E$ 
	and for all $x \in T \res \eta$, the set of successors of $x$ in $T_\eta$, 
	considered as a suborder of $L$, is isomorphic to the rationals.
\end{lemma}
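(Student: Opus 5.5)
We will prove Lemma \ref{club lemma} by a closure (elementary submodel) argument, since the two relevant properties of $T_\eta$ are each witnessed by countably many requirements.

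\textbf{Step 1: reduce to a countable order-type check.}
For $x \in T\res\eta$, let $S_x = \{y \in T_\eta : x \subseteq y\}$. It is a suborder of $L$, and it is countable because $T$ is an $\omega_1$-tree. By Cantor's theorem, $S_x$ is isomorphic to the rationals iff it is nonempty, has no endpoints, and is dense. So we look for a club $E$ such that for every $\eta \in E$ and every $x \in T\res\eta$ the following hold:
\begin{enumerate}
\item[(a)] $S_x \neq \emptyset$;
\item[(b)] for each $y \in S_x$ there are $y', y'' \in S_x$ with $y' <_L y <_L y''$;
\item[(c)] for $y <_L z$ in $S_x$ there is $w \in S_x$ with $y <_L w <_L z$.
\end{enumerate}

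\textbf{Step 2: define $E$.}
Let $E$ be the set of $\eta < \omega_1$ of the form $M \cap \omega_1$, where $M \prec H(\omega_2)$ is countable and $T \in M$. This set is club in the standard way.

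\textbf{Step 3: verify (a)--(c) for $\eta \in E$.}

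For (a): by normality condition (4), $x$ has uncountably many extensions. An Aronszajn tree has countable levels, so $x$ has extensions at every height above $\h(x)$, in particular at height $\eta$.

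For (b) and (c), fix $y, z \in S_x$ and take limit $\eta$ (elements of $E$ are limits). Then $\Delta(y,z) < \eta$. Choose $\xi$ with $\Delta(y,z) < \xi < \eta$ and $\h(x) < \xi$. Put $y_0 = y\res\xi$ and $z_0 = z\res\xi$; these lie in $M$.

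\emph{Density (c).} Since $T$ is Aronszajn and normal, $y_0$ has two incomparable extensions. Repeating, $y_0$ has extensions $u \supseteq y_0$ of height below $\eta$ that branch off from $y$ above $\xi$, and on either lexicographic side of $y$.

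To see this, first note that the branch $y$ is not in $M$ and continues past $\xi$. Choose $\zeta$ with $\xi < \zeta < \eta$. The node $y\res\zeta$ splits somewhere above it below $\eta$: by elementarity, $M$ contains nodes above $y\res\zeta$ with incomparable successors. So there are $\gamma < \eta$ and $u \in T\res\eta$ with $u \supseteq y\res\gamma$ and $u(\gamma) \neq y(\gamma)$.

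To control the side of $y$ on which $u$ lands, use immediate successors at a splitting node $y\res\gamma$. Its immediate successors are the values $y\res\gamma{}^\frown n$, which form a countable set. There are at least two of them, but they could all lie on one side of $y(\gamma)$. To handle this, argue inside $M$: in $T$, the set of $n$ such that $y\res\gamma{}^\frown n \in T$ is a set of naturals. If $y(\gamma)$ is the minimum of this set for cofinally many $\gamma$ and the maximum for cofinally many others, we are fine. This is the main obstacle, and I expect it needs a genuine argument rather than simply finding a split above.

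The plan is to use an order-dense-pair argument. Take two incomparable extensions $a <_L b$ of $y_0$ lying in $M$ below $\eta$. Each of them extends to level $\eta$ by (a), applied inside $M$. By Lemma \ref{easy lex lemma}, their extensions to level $\eta$ keep the order $a' <_L b'$. One of the two differs from $y$ below $\eta$, and $y$ lies somewhere relative to them.

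To get elements strictly on both sides of $y$, iterate this. By elementarity, for each $\zeta < \eta$ there are, in $M$, three pairwise incomparable extensions of $y\res\zeta$. At most one of them is compatible with $y$, so the other two branch off from $y$ above $\zeta$. The lex order relative to $y$ is then determined at the branching point, again by Lemma \ref{easy lex lemma}.

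If all branches off $y$ above some $\zeta$ lie on one side, say left, I will derive a contradiction. In that case $y$ would be the $L$-maximum of $S_{y\res\zeta}$ at level $\eta$. But the statement ``$T_{y\res\zeta}$ has a lex-maximal element at level $\beta$'' is definable. By elementarity this would force a rightmost branch through $T_{y\res\zeta}$ to be cofinal in $\eta$ and to be computed in $M$. This gives uncountably many levels with a rightmost node, and hence a cofinal branch in $T$, contradicting that $T$ is Aronszajn.

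The same reasoning gives density (c): between $y$ and $z$, extend a node of $M$ lying strictly between $y\res\xi'$ and $z\res\xi'$. Such a node is obtained by branching off $y$ to the right above $\Delta(y,z)$, using the argument just given.
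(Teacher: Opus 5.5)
The paper gives no proof of this lemma (it only calls it straightforward), so there is nothing to compare against. Judged on its own, your argument is correct, and all the real work is in your last paragraph.

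That paragraph works as follows. Suppose every node branching off $y$ above $\zeta$ lies to the left of $y$. Then for every $\beta \in [\zeta,\eta]$, $y \res \beta$ is the $L$-maximum of the successors of $y \res \zeta$ in $T_\beta$. So the set $B \in M$ of levels at which such a maximum exists contains $[\zeta,\eta)$. Hence $B$ is unbounded in $\omega_1$, since otherwise $\sup B \in M \cap \omega_1 = \eta$.

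You should delete the exploratory material: the minima/maxima of immediate-successor sets, the order-dense pair, and the three incomparable extensions. None of it is used. You should also state two routine points that you currently leave implicit.

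First, maxima at different levels cohere. Let $m, m'$ be the maxima at levels $\beta < \beta'$ and suppose $m' \res \beta \neq m$. Extend $m$ to level $\beta'$ using normality, and apply Lemma~\ref{easy lex lemma} to get $m <_L m' \res \beta$, a contradiction. This coherence is what turns ``unboundedly many levels with a rightmost node'' into a cofinal branch.

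Second, (b) and (c) follow from the resulting branching. Take $\zeta = \h(x)$ for (b) and $\zeta = \Delta(y,z)+1$ for (c). Then extend a node that branches off $y$ to the right (or left) at a level $\gamma \ge \zeta$ up to level $\eta$.

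The same idea also gives an argument without elementary submodels, which is probably what the authors intend. By the coherence above, for each $x \in T$ the set of levels at which the successors of $x$ have an $L$-maximum or $L$-minimum is an initial segment. It is bounded because $T$ has no cofinal branch. So $E$ can be taken to be the limit closure points of the map sending $\alpha$ to the supremum of these bounds over the countably many $x \in T \res (\alpha+1)$.
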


\section{Compatibility of Countryman Lines} \label{Compatibility of Countryman Lines}

In this section, we give an overview of our approach for proving the 
consistency of the statement that \textsf{CH} holds and 
any two Countryman lines are compatible.

We begin by reviewing one way of representing an Aronszajn line as 
a lexicographically ordered Aronszajn tree. 
Suppose that $L$ is an Aronszajn line. 
Then there exists an Aronszajn tree $T$ satisfying the following properties.
	\begin{enumerate}
		\item The elements of $T$ of successor height are intervals 
		of $L$ which have a right endpoint.
		\item The elements of $T$ of limit height are non-empty 
		intervals of $L$.
		\item $T$ has a root which is $L$.
		\item $I <_T J$ iff $J \subseteq I$, for all $I, J \in T$.
		\item Incomparable elements of $T$ are disjoint.
		\item $T$ is Hausdorff.
		\item Every element of $T$ with limit height has an immediate successor.
		\item Linearly order the set of immediate successors of any element of $T$ 
		by letting $I$ be below $J$ iff $\max(I) <_L \max(J)$, and let $M$ denote 
		the corresponding lexicographical ordering of $T$. Then:
		\begin{enumerate}
			\item The set of immediate successors of any element of $T$ is either empty, 
			a singleton, or is isomorphic to $(\omega,\in)$.
			\item The suborder of $M$ consisting of elements of $T$ with successor height 
			is isomorphic to $L$.
		\end{enumerate}
	\end{enumerate}

The proof of the above is a slight modification of the augmented version of the tree 
construction of Theorem 4.2 mentioned in the comments before Theorem 4.7 of \cite{baumgartner2} 
(we use right endpoints instead of left endpoints to obtain (8a)). 
Note that any lexicographically ordered 
tree satisfying the above properties is isomorphic to a downwards closed 
lexicographically ordered subtree of 
${}^{\underaccent{\breve}{\omega}_1} \omega$, so going forward 
we only consider trees of this kind.

\begin{lemma} \label{augmented tree is Countryman}
	Suppose that $T$ is a downwards closed subtree of 
	${}^{\underaccent{\breve}{\omega}_1} \omega$ 
	which is special, every element of limit height has an immediate successor, 
	and the lexicographical ordering of $T$ 
	restricted to its successor levels is a Countryman line. 
	Then the lexicographical ordering of $T$ is a Countryman line.
\end{lemma}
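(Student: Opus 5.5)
We must show that $L^2$ is a countable union of chains, where $L$ is the lexicographical ordering of $T$ and $S \subseteq T$ denotes the set of elements of successor height, so that $L \res S$ is Countryman by hypothesis. The plan is to map every element of $T$ to an element of successor height in a way that preserves enough of the order, and then to use the specialness of $T$ to handle the pairs on which this map loses information.

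\emph{The map $\sigma$.} Define $\sigma : T \to S$ by letting $\sigma(x) = x$ if $x$ has successor height. If $x$ has limit height (or is the root), let $\sigma(x)$ be some fixed immediate successor of $x$; such a successor exists by hypothesis, and for the root we may assume it by normality, or simply treat the root separately since it is a single point. Then $x \le_T \sigma(x)$ and $\h_T(\sigma(x)) \le \h_T(x)+1$.

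\emph{Transfer of order.} The key claim is that if $x, y$ are incomparable in $T$, then $x <_L y$ iff $\sigma(x) <_L \sigma(y)$. This follows from Lemma~\ref{easy lex lemma}, because $x <_T \sigma(x)$ or $x = \sigma(x)$, and incomparable elements have incomparable extensions with the same meet. Now fix chains $\{C_n : n<\omega\}$ in the componentwise order covering $(L\res S)^2$. For each $n$, let $D_n$ be the set of pairs $(x,y) \in L^2$ such that $x,y$ are incomparable in $T$ and $(\sigma(x),\sigma(y)) \in C_n$. Given two pairs $(x,y),(x',y')$ in $D_n$, we want $x \le x'$ and $y \le y'$ or the reverse. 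The difficulty is that $x$ and $x'$ may be $T$-comparable, in which case the order of $\sigma(x)$ and $\sigma(x')$ need not reflect that of $x$ and $x'$.

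\emph{The main obstacle: comparable pairs.} To handle comparability, use specialness. Write $T = \bigcup_k A_k$ with each $A_k$ an antichain, and refine $D_n$ into the sets
\[
D_{n,k,l} = \{(x,y) \in D_n : x \in A_k,\ y \in A_l\}.
\]
Within $D_{n,k,l}$, distinct first coordinates lie in the same antichain and hence are incomparable, so by the key claim $x<_L x'$ iff $\sigma(x)<_L\sigma(x')$, and likewise for the second coordinates. The chain condition on $C_n$ then transfers, and each $D_{n,k,l}$ is a chain. The case $\sigma(x)=\sigma(x')$ with $x\ne x'$ needs a check: it can only occur if $x$ and $x'$ are comparable, which is excluded inside $A_k$.

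It remains to cover the pairs $(x,y)$ with $x,y$ comparable in $T$, including the diagonal. Since $x \le_T y$ implies $x<_L y$ or $x=y$, we use specialness again: the sets
\[
E_{k,l} = \{(x,y) : x\in A_k,\ y\in A_l,\ x \le_T y\}
\]
and the symmetric sets $E'_{k,l}$ for $y \le_T x$ should be covered by countably many chains. This is the step I expect to be hardest. It is not automatic, since $(x,y),(x',y')$ with $x<_L x'$ but $y'<_L y$ can occur.

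The first thing I would try is to code such a pair by the single element $y$ together with the integer index $k$ of $x$'s antichain. Since $y$ determines $x$ given $k$ (as $x$ is the unique element of $A_k$ below $y$, if any), the set $E_{k,l}$ is the graph of a partial function $f_k$ from $A_l$. I would then split further using a chain decomposition of $L^2$ restricted to pairs $(y \res \xi, y)$. Alternatively, I would use that Countryman lines have $L^n$ a countable union of chains applied to $(\sigma(x),\sigma(y),\ldots)$ with appropriate witnesses, comparing $x,x'$ via their incomparable extensions. If this fails, I would fall back on showing that $L$ embeds into a Countryman line built from $L\res S$, by replacing each limit-height node by a point between its successors, which is order-theoretically justified by Lemma~\ref{easy lex lemma}.
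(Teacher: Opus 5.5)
\paragraph{Incomplete as written, but the gap is not real.} Your proof stops short of the end. You restricted $D_n$ to pairs $(x,y)$ with $x$ and $y$ incomparable in $T$. That leaves the pairs with $x \le_T y$ or $y \le_T x$, including the diagonal, uncovered, and for these you only list tentative strategies. The difficulty comes from the unnecessary restriction, not from the problem itself. The componentwise order on $L^2$ compares two pairs $(x,y)$ and $(x',y')$ only coordinatewise: $x$ with $x'$, and $y$ with $y'$. How $x$ relates to $y$ inside a single pair never enters.

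Your own verification that $D_{n,k,l}$ is a chain never uses the incomparability of $x$ and $y$. It uses only three facts:
\begin{itemize}
\item $x,x' \in A_k$ and $y,y' \in A_l$, so each coordinate is equal to or $T$-incomparable with its counterpart;
\item Lemma~\ref{easy lex lemma} then transfers the $L$-order between incomparable elements to their $\sigma$-images, and distinct incomparable elements have distinct $\sigma$-images;
\item $C_n$ is a chain.
\end{itemize}
So drop that clause. The sets $\{(x,y) \in L^2 : x \in A_k,\ y \in A_l,\ (\sigma(x),\sigma(y)) \in C_n\}$ are chains by exactly your argument. They cover all of $L^2$, because $(\sigma(x),\sigma(y))$ always lies in $(L \res S)^2$ and hence in some $C_n$.

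With that change, the paragraph on the ``main obstacle'', the sets $E_{k,l}$, and the fallback ideas can all be deleted. The configuration you worried about, with $x <_L x'$ but $y' <_L y$, cannot occur inside a single one of these sets. The corrected argument is exactly the paper's proof. There, with $f$ a specializing map, $g$ a coloring of the successor-level square into chains, and $c^+$ your $\sigma(c)$, the pair $(c,d)$ is colored by $(f(c), f(d), g(c^+,d^+))$, which is your index $(k,l,n)$.
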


\begin{proof}
	Let $L$ denote the lexicographical ordering of $T$ and let $C$ denote 
	the suborder of $L$ consisting of elements of $T$ with successor height. 
	Fix a specializing function $f : T \to \omega$ and fix a function 
	$g : C^2 \to \omega$ such that for all $n < \omega$, $g^{-1}(n)$ is a chain in $C^2$.  
	For each $c \in T$ not of successor height, 
	pick an arbitrary immediate successor $c^+$ of $c$, and for $c \in T$ 
	of successor height, let $c^+ = c$.  
	Define $h : L^2 \to \omega^3$ by $h(c,d)= (f(c),f(d),g(c^+,d^+))$. 
	Using Lemma \ref{easy lex lemma}, it is easy to check that for all 
	$x \in \omega^3$, $h^{-1}(x)$ is a chain in $L^2$.
\end{proof}

\begin{lemma} \label{isomorphic antichains}
	Let $T$ be an Aronszajn tree which is a downwards closed 
	subtree of ${}^{\underaccent{\breve}{\omega}_1} \omega$ 
	and let $L$ be the lexicographical ordering of $T$. 
	Suppose that $X$ is a dense subset of $T$. 
	Then any uncountable suborder of $L$ which is an antichain of $T$ 
	is isomorphic to an uncountable suborder of $X$.
\end{lemma}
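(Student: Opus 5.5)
Let $A \subseteq T$ be an uncountable antichain, viewed as a suborder of $L$. The plan is to replace each $a \in A$ by an element of $X$ lying above $a$. Then Lemma \ref{easy lex lemma} shows that the replacement map preserves $<_L$.

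We read ``$X$ is dense in $T$'' in the usual sense: for every $t \in T$ there is $x \in X$ with $t \le_T x$. For each $a \in A$, choose $x_a \in X$ with $a \le_T x_a$, and define $\varphi : A \to X$ by $\varphi(a) = x_a$.

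\emph{Injectivity and order preservation.} Let $a \ne b$ be in $A$. Since $A$ is an antichain, $a$ and $b$ are incomparable. We have $a \le_T x_a$ and $b \le_T x_b$. If $a \le_T x_a$ is an equality, the conclusion is trivial, so it suffices to apply the first part of Lemma \ref{easy lex lemma} in the non-strict form. Set $\gamma = \Delta(a,b)$. Then $x_a \res (\gamma+1) = a \res (\gamma+1)$ and $x_b \res (\gamma+1) = b \res (\gamma+1)$. Hence $x_a$ and $x_b$ are incomparable, $\Delta(x_a,x_b) = \gamma$, and
\[
x_a(\gamma) = a(\gamma) \quad\text{and}\quad x_b(\gamma) = b(\gamma).
\]
So $a <_L b$ iff $x_a <_L x_b$. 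In particular $x_a \ne x_b$, so $\varphi$ is an order isomorphism from $A$ onto $\varphi[A]$.

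\emph{Uncountability.} Since $\varphi$ is injective and $A$ is uncountable, $\varphi[A] \subseteq X$ is uncountable. Thus $A$ is isomorphic to the uncountable suborder $\varphi[A]$ of $X$, with the order $X$ inherits from $L$.

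There is no real obstacle here. The only point needing care is the meaning of ``dense''. If the paper intends the weaker notion that every element has some element of $X$ comparable with it, the argument still goes through. In that case pick $x_a$ comparable with $a$. When $x_a <_T a$, it must have height greater than the relevant meets, which can only be ensured for all pairs simultaneously if we choose $x_a$ above $a$. So I would use the standard ``above'' meaning of density.
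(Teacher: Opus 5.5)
Your proof is correct and is essentially the paper's argument. The paper likewise chooses $x_a \in X$ with $a \le_T x_a$ and gets $a <_L b$ iff $x_a <_L x_b$ from Lemma \ref{easy lex lemma}; your direct computation with $\gamma = \Delta(a,b)$ just unpacks that lemma, covers the case $x_a = a$, and makes the injectivity and uncountability explicit. One correction to your closing remark: under the weaker ``comparable'' reading of density the lemma is false, since $X = \{\emptyset\}$ (the root, which is comparable with every node) would qualify, so the ``above'' reading is the one the statement actually requires.
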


\begin{proof}
	Let $A \subseteq T$ be an uncountable antichain. 
	For each $a \in A$, pick some $x_a \in X$ such that $a \le_T x_a$. 
	Since $A$ is an antichain, Lemma \ref{easy lex lemma} implies that for all 
	distinct $a, b \in A$, $a <_L b$ iff $x_a <_L x_b$.
\end{proof}

The next lemma describes our strategy for proving the first main theorem of the article.

\begin{lemma}
	Assume that all Aronszajn trees are special. 
	Suppose that whenever $L$ and $J$ are Countryman lexicographical 
	orderings of normal downwards closed subtrees 
	$T$ and $S$ of ${}^{\underaccent{\breve}{\omega}_1} \omega$, respectively, 
	then $L$ and $J$ are compatible. 
	Then any two Countryman lines are compatible.
\end{lemma}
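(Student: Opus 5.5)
Given Countryman lines $L_1$ and $L_2$, I will replace each $L_i$ by a suborder which is isomorphic to a Countryman lexicographical ordering of a normal downwards closed subtree of ${}^{\underaccent{\breve}{\omega}_1} \omega$. The hypothesis then makes these suborders compatible, hence $L_1$ and $L_2$ are compatible. Compatibility passes upward from suborders, so it suffices to find, for each Countryman line $L$, such a normal tree $T'$ whose lexicographical ordering is Countryman and isomorphic to an uncountable suborder of $L$.

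Fix a Countryman line $L$; it is Aronszajn. Take the augmented tree $T$ from the start of this section, viewed as a downwards closed subtree of ${}^{\underaccent{\breve}{\omega}_1} \omega$ with lexicographical ordering $M$. By (8b), the suborder $C$ of $M$ consisting of successor-height elements is isomorphic to $L$, hence Countryman. By (7), every element of limit height has an immediate successor. $T$ is Aronszajn, hence special by assumption. Lemma \ref{augmented tree is Countryman} therefore gives that $M$ is Countryman. Next, pass to a normal downwards closed subtree $T' \subseteq T$, which exists as noted in Section 2. Its lexicographical ordering $M'$ is the restriction of $M$ to $T'$, so $M'$ is an uncountable suborder of a Countryman line and thus Countryman.

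It remains to embed $M'$, or at least an uncountable suborder of it, into $L$. The natural move is to take an uncountable antichain $A \subseteq T'$; one exists because $T'$ is special, hence non-Suslin. Then apply Lemma \ref{isomorphic antichains} with $X$ the set of successor-height elements, which is dense in $T$ by (7) and the successor structure. This shows that $A$, as a suborder of $M$, is isomorphic to an uncountable suborder of $C \cong L$.

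This yields only a suborder of $M'$, not all of $M'$. So I will apply the compatibility hypothesis at the level of $M'_1, M'_2$ (the trees $T'_i$ built from $L_1, L_2$) first. This gives uncountable $Y_1 \subseteq M'_1$ and $Y_2 \subseteq M'_2$ that are isomorphic or anti-isomorphic. Then I shrink: $Y_1$ is an uncountable subset of the special tree $T'_1$, so it contains an uncountable antichain $A_1$. Its image under the (anti-)isomorphism is an uncountable $B_2 \subseteq Y_2$, which likewise contains an uncountable antichain $A_2 \subseteq T'_2$. Pulling $A_2$ back gives an uncountable $A'_1 \subseteq A_1$, still an antichain, with $A'_1$ and $A_2$ isomorphic or anti-isomorphic. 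By Lemma \ref{isomorphic antichains}, each $A'_1$ and $A_2$ is isomorphic to a suborder of $L_1$, respectively $L_2$, which finishes the proof.

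The main obstacle is this ordering of steps. Compatibility has to be obtained for the tree orderings before restricting to antichains, and the antichain restriction must then be made compatible on both sides simultaneously, which the pullback handles. A secondary point is to check that $X$ is dense in $T'$ inside $T$, so that Lemma \ref{isomorphic antichains} applies to antichains of $T'$. That lemma only needs extensions in $T$, and (7) supplies successor-height extensions of every node.
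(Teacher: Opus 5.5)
Your proposal is correct and follows essentially the same route as the paper's proof. You build the augmented trees, use Lemma~\ref{augmented tree is Countryman} together with speciality to get Countryman lexicographical orderings, pass to normal subtrees, apply the hypothesis, shrink to antichains on both sides via the pullback through the (anti-)isomorphism, and finish with Lemma~\ref{isomorphic antichains} using the successor-height elements as the dense set. The plan in your first paragraph, embedding the whole normal tree's ordering into $L$, is not needed, and you rightly replace it with this antichain argument.
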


\begin{proof}
	Let $X$ and $Y$ be Countryman lines. 
	Fix downwards closed subtrees $T_0$ and $S_0$ of 
	${}^{\underaccent{\breve}{\omega}_1} \omega$ 
	satisfying properties (1)--(8) listed at the beginning of the section such that 
	the lexicographical ordering of the successor levels of $T_0$ and $S_0$ 
	are isomorphic to $X$ and $Y$, respectively. 
	Let $L_0$ and $J_0$ be the lexicographical orderings of $T_0$ and $S_0$, respectively. 
	By Lemma \ref{augmented tree is Countryman}, $L_0$ and $J_0$ are Countryman lines. 
	Fix normal subtrees $T$ and $S$ of $T_0$ and $S_0$, respectively, and let 
	$L$ and $J$ denote the lexicographical orderings of $T$ and $S$, respectively. 
	Since $L \subseteq L_0$ and $J \subseteq J_0$, 
	$L$ and $J$ are also Countryman lines.

	By the assumptions of the lemma, $L$ and $J$ are compatible. 
	So fix uncountable sets $A_0 \subseteq L$ and $B_0 \subseteq J$ 
	such that $A_0$ and $B_0$ are either isomorphic or anti-isomorphic as 
	witnessed by an order preserving or order reversing bijection $G : A_0 \to B_0$. 
	Applying the fact that $A_0 \subseteq T$ and $T$ is special, fix an uncountable 
	set $A_1 \subseteq A_0$ which is an antichain of $T$. 
	Similarly, as $S$ is special fix an uncountable set $B \subseteq G[A_1]$ 
	which is an antichain of $S$. 
	Let $A = G^{-1}[B]$. 
	Then $A$ and $B$ are either isomorphic or anti-isomorphic. 
	Now Lemma \ref{isomorphic antichains} applied to each of $T_0$ and $S_0$ implies that 
	$A$ is isomorphic to a subset of $X$ and $B$ is isomorphic to a subset of $Y$. 
	Therefore, $X$ and $Y$ contain uncountable subsets which are either 
	isomorphic or anti-isomorphic.
\end{proof}

We work with normal special Aronszajn trees $T$ and $S$ which are downwards 
closed subtrees of 
${}^{\underaccent{\breve}{\omega}_1} \omega$ with corresponding lexicographical orderings 
$L$ and $J$ which are each Countryman lines. 
We consider two possibilities. 
The first is that $L^*$ and $J$ are near. 
In that case, $T$ and $S$ are already handled. 
Assume that $L^*$ and $J$ are not near. 
Then our goal becomes to design a forcing $\p(T,S)$ 
which adds a club set $C \subseteq \omega_1$ 
and a function $F : T \res C \to S \res C$ satisfying:
\begin{itemize}
	\item $F$ is level preserving (that is, for all $x \in T \res C$, 
	$\h_S(F(x)) = \h_T(x)$);
	\item for all $x$ and $y$ in $T \res C$:
	\begin{itemize}
		\item $x <_T y$ implies $F(x) <_S F(y)$;
		\item $x <_L y$ implies $F(x) <_J F(y)$.
	\end{itemize}
\end{itemize}

The forcing $\p(T,S)$ adds the function as above using countable approximations. 
Specifically, conditions include a \emph{working part} which is a function 
$f : c_f \to S$, where $c_f$ is a closed countable subset of $\omega_1$ and $f$ 
satisfies the description of $F$ above for members of its domain. 
There are two main types of properties which we prove of this forcing. 
The first type is \emph{extension}, which means extending the working part 
of a condition arbitrarily high in the tree while satisfying various requirements. 
The second type is the ability to build 
generic conditions over countable elementary substructures 
to prove properness and related properties. 
For this second purpose, the conditions in our forcing involve a 
second component, which is a countable 
set of subtrees of product trees called \emph{promises} which put additional 
restrictions on the working part. 
The method of promises was introduced by Shelah in his forcing for specializing 
an Aronszajn tree while not adding reals (\cite[Ch.\ V, Sec. 6]{shelahbook}). 

In order for $\p(T,S)$ to preserve \textsf{CH}, 
we prove that it is \emph{totally proper}, which means that it is 
proper and does not add reals. 
Equivalently, for any appropriate countable elementary substructure $N$, every condition 
in $N$ has an extension $q$ which is a \emph{total master condition}, meaning that for every 
dense set $D \subseteq \p(T,S)$ in $N$, there is some condition in $N \cap D$ above $q$. 
A general approach for proving total properness is to enumerate all 
dense sets in $N$ as $\langle D_n : n < \omega \rangle$, 
build a descending sequence $\langle p_n : n < \omega \rangle$ of conditions 
such that for each $n$, $p_{n+1} \in D_n \cap N$, and 
then define a lower bound of this sequence. 
A minor adjustment in this argument shows that $\p(T,S)$ satisfies a stronger property 
called \emph{$(<\! \omega_1)$-proper}, 
which means that total master conditions exists not only for a 
single countable model, but for all models appearing in an $\in$-chain 
of countable models of any countable length. 
Another adjustment shows that $\p(T,S)$ has the 
$\omega_2$-p.i.c. (\cite[Ch.\ VIII, Def.\ 2.1]{shelahbook}).

In order to obtain the desired consistency result, we need to iterate the above 
forcing so that the iteration itself does 
not add reals or collapse $\omega_2$. 
Working over a model of $2^\omega = \omega_1$ and $2^{\omega_1} = \omega_2$, 
we use a countable support iteration of length $\omega_2$ of forcings of the type 
above together with forcings which specialize Aronszajn trees without adding reals. 
The fact that the forcing iteration is totally proper is a consequence of 
the following iteration theorem of Shelah: 
Any countable support forcing iteration of forcings which are $(<\! \omega_1)$-proper 
and Dee-complete is totally proper (\cite[Ch.\ V, Th.\ 7.1]{shelahbook}). 
Finally, for the preservation of cardinals greater than $\omega_1$ as well as for 
the usual bookkeeping process, the forcings we use 
satisfy the $\omega_2$-p.i.c.
This property is a strong form of the $\omega_2$-chain condition which is preserved, 
over a model of \textsf{CH}, by any countable support forcing iteration of length 
less than $\omega_2$, and guarantees that such an iteration of length $\omega_2$ is 
$\omega_2$-c.c.\ (\cite[Ch.\ VIII, Lem.\ 2.4]{shelahbook}). 
Finally, in order to ensure a successful bookkeeping of all trees which appear in the final 
model, we use the fact that our forcings have size $2^{\omega_1} = \omega_2$ 
and the iteration is $\omega_2$-c.c.

To summarize, in order to prove the first consistency result we need forcings of the 
form $\p(T,S)$ to be $(<\! \omega_1)$-proper, Dee-complete, and $\omega_2$-p.i.c. 
We also use the fact due to Shelah \cite{shelahbook} that any Aronszajn tree 
can be made special using a forcing which has these properties as well. 
But the core piece of information about the forcing $\p(T,S)$ is that it is totally proper. 
The verification of $(<\! \omega_1)$-proper is a slight adjustment of the proof of this fact, 
and the proofs of Dee-completeness and $\omega_2$-p.i.c.\ are routine and 
duplicate proofs of these properties for similar posets in the literature. 
To avoid artificially extending the length and complexity of the article by including 
routine but lengthy details, 
we only verify total properness and leave the interested reader to explore 
the other properties using arguments from other sources (specifically, see 
\cite[Ch.\ V, Th.\ 6.1]{shelahbook} 
and \cite[Sec.\ 10]{jk45}).\footnote{Complete proofs of all results 
stated in this article 
are included in the second author's PhD dissertation.}

For the remainder of the proof of Theorem 1, 
fix normal special Aronszajn trees $T$ and $S$ which are downwards 
closed subtrees of ${}^{\underaccent{\breve}{\omega}_1} \omega$ 
with corresponding lexicographical orderings $L$ and $J$ which are each 
Countryman lines. 
Applying Lemma \ref{club lemma} to $S$, 
fix a club $E \subseteq \omega_1$ such that 
for all $\eta \in E$ and for all $x \in S \res \eta$, 
the set of successors of $x$ in $S_\eta$, 
considered as a suborder of $J$, is isomorphic to the rationals. 
Most of the properties of the poset $\p(T,S)$ and its associated objects 
can be proved without additional hypotheses, 
but at several key places we assume that $L^*$ and $J$ are not near.

\section{Tuples, 1}

We work with finite tuples of pairs of the form 
$\vec x = ((a_k,b_k) : k < n)$, 
where $n < \omega$ and for some $\alpha \in E$, 
$a_0,\ldots,a_{n-1}$ are in $T_\alpha$ 
and $b_0,\ldots,b_{n-1}$ are in $S_{\alpha}$. 
For a fixed $n$, we consider such tuples as elements of the $n$-dimensional product tree 
$((T \res E) \otimes (S \res E))^{\otimes n}$. 

\begin{defn}
	Let $0 < n < \omega$, $\alpha \in E$, and 
	$\vec x = ((a_k,b_k) : k < n) \in 
	(T_\alpha \times S_{\alpha})^{n}$. 
	\begin{enumerate}
		\item We refer to $\alpha$ as the \emph{height} of $\vec x$.
		\item $\vec x$ is \emph{lex-increasing} if 
		$a_0 <_L \cdots <_L a_{n-1}$ and 
		$b_{0} <_J \cdots <_J b_{n-1}$.
		\item For any $\xi \le \alpha$:
		\begin{enumerate}
			\item $\vec x$ has 
			\emph{unique drop-downs to $\xi$} if 
			$(a_k \res \xi : k < n)$ and 
			$(b_k \res \xi : k < n)$ 
			are each injective;
			\item Define 
			$\vec x \res \xi = 
			((a_k \res \xi,b_k \res \xi) : k < n)$.
		\end{enumerate}
	\end{enumerate}
\end{defn}

\begin{lemma} \label{lex increasing drop 1}
	Let $\xi < \alpha \in E$. 
	Suppose that $\vec x \in (T_\alpha \otimes S_{\alpha})^n$ and $\vec x$ 
	has unique drop-downs to $\xi$. 
	Then $\vec x$ is lex-increasing iff $\vec x \res \xi$ is lex-increasing.
\end{lemma}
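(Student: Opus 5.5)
The statement splits into a $T$-coordinate assertion and an $S$-coordinate assertion, so the plan is to prove each coordinate separately and then combine them. By definition, $\vec x$ is lex-increasing iff $a_0 <_L \cdots <_L a_{n-1}$ and $b_0 <_J \cdots <_J b_{n-1}$. Likewise $\vec x \res \xi$ is lex-increasing iff $a_0 \res \xi <_L \cdots <_L a_{n-1}\res\xi$ and $b_0\res\xi <_J \cdots <_J b_{n-1}\res\xi$. Each chain of inequalities is a conjunction of consecutive comparisons. It therefore suffices to show, for each $k < n-1$, that $a_k <_L a_{k+1}$ iff $a_k\res\xi <_L a_{k+1}\res\xi$, and the analogous statement for the $b$'s in $J$.

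Fix $k$ and put $a = a_k$, $b = a_{k+1}$. Both lie in $T_\alpha$. The unique drop-downs hypothesis gives $a \res \xi \ne b \res \xi$. Since $a\res\xi$ and $b\res\xi$ are distinct elements of the same level $T_\xi$, they are incomparable in $T$. Hence $a$ and $b$ are incomparable as well, and $\Delta(a,b) < \xi$, because $a$ and $b$ already differ below $\xi$.

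Now apply Lemma \ref{easy lex lemma} in its first form, with $a_0 = a\res\xi <_T a_1 = a$ and $b_0 = b\res\xi <_T b_1 = b$. These satisfy its hypotheses: $\xi < \alpha$ makes the relations strict, and $a\res\xi$, $b\res\xi$ are incomparable. The lemma yields $a\res\xi <_L b\res\xi$ iff $a <_L b$. Alternatively, use the second form with $\Delta(a,b) < \xi \le \h_T(a) = \h_T(b) = \alpha$, which gives the same conclusion. The identical argument in $S$, with the lexicographical ordering $J$, handles the $b_k$'s.

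There is no real obstacle here. The only point needing care is checking that unique drop-downs forces incomparability of $a_k\res\xi$ and $a_{k+1}\res\xi$, so that Lemma \ref{easy lex lemma} applies; this holds because distinct nodes on a common level of a tree are incomparable.
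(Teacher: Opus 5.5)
Your proof is correct and follows the paper's approach; the paper only remarks that this lemma follows easily from Lemma~\ref{easy lex lemma}. Your argument supplies exactly those details: unique drop-downs make $a_k \res \xi \ne a_{k+1} \res \xi$, so these are incomparable nodes of $T_\xi$, and Lemma~\ref{easy lex lemma} then transfers each consecutive comparison between levels $\xi$ and $\alpha$, in $T$ with $L$ and likewise in $S$ with $J$.
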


\begin{defn}
	Consider $\alpha \in E$ and 
	lex-increasing tuples 
	$\vec x = ((a_k,b_k) : k < m)$ and $\vec y = ((c_k,d_k) : k < n)$ 
	in $(T_\alpha \times S_{\alpha})^{<\omega}$. 
	\begin{enumerate}
		\item The tuples $\vec x$ and $\vec y$ are \emph{disjoint} if 
		$a_i \ne c_j$ and $b_i \ne d_j$ for all $i < m$ and $j < n$. 
		\item If $\vec x$ and $\vec y$ are disjoint, we say that they are 
		\emph{compatible} if for all $i < m$ and $j < n$, 
		$a_i <_L c_j$ iff $b_i <_J d_j$.
	\end{enumerate}
\end{defn}

An easy fact which is used frequently below is that 
if $\vec x$ and $\vec y$ are compatible, then there exists a lex-increasing tuple 
which lists exactly the pairs occurring in either $\vec x$ or $\vec y$.

\begin{lemma} \label{compatible drop 1}
	Let $\xi < \alpha \in E$. 
	Suppose that $\vec x$ and $\vec y$ are lex-increasing tuples in 
	$(T_\alpha \otimes S_{\alpha})^{<\omega}$ each of which has 
	unique drop-downs to $\xi$ and satisfy that 
	$\vec x \res \xi$ and $\vec y \res \xi$ are disjoint. 
	Then $\vec x$ and $\vec y$ are compatible iff $\vec x \res \xi$ and 
	$\vec y \res \xi$ are compatible.
\end{lemma}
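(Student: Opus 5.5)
The plan is to reduce the comparison of $\vec x$ and $\vec y$ to a finite collection of pairwise comparisons, and to compare each pair after dropping down to level $\xi$ by means of Lemma \ref{easy lex lemma}. Write $\vec x = ((a_i,b_i) : i < m)$ and $\vec y = ((c_j,d_j) : j < n)$, both of height $\alpha$. By definition, $\vec x$ and $\vec y$ are compatible iff they are disjoint and for all $i < m$ and $j < n$ we have $a_i <_L c_j \iff b_i <_J d_j$. The same description holds for $\vec x \res \xi$ and $\vec y \res \xi$, with $a_i \res \xi$, $b_i \res \xi$, $c_j \res \xi$, $d_j \res \xi$ in place of $a_i, b_i, c_j, d_j$. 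It therefore suffices to prove the following for each fixed pair $(i,j)$:
\begin{enumerate}
	\item[(a)] $\vec x$ and $\vec y$ are disjoint;
	\item[(b)] $a_i <_L c_j \iff a_i \res \xi <_L c_j \res \xi$;
	\item[(c)] $b_i <_J d_j \iff b_i \res \xi <_J d_j \res \xi$.
\end{enumerate}
Once (a)--(c) are in hand, the two biconditionals defining compatibility at heights $\alpha$ and $\xi$ are literally equivalent.

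For (a), disjointness of $\vec x \res \xi$ and $\vec y \res \xi$ gives $a_i \res \xi \ne c_j \res \xi$. Hence $a_i \ne c_j$, and likewise $b_i \ne d_j$. So $\vec x$ and $\vec y$ are disjoint.

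For (b), note that $a_i$ and $c_j$ are distinct elements of the same level $T_\alpha$, so they are incomparable in $T$. Since $a_i \res \xi \ne c_j \res \xi$, the two functions already differ below $\xi$, which gives $\Delta(a_i,c_j) < \xi$. The elements $a_i \res \xi$ and $c_j \res \xi$ are distinct and lie on the same level $T_\xi$, so they are also incomparable. Lemma \ref{easy lex lemma} applies with $a_0 = a_i \res \xi$, $a_1 = a_i$, $b_0 = c_j \res \xi$, $b_1 = c_j$; the lemma is trivially fine with non-strict $\le_T$, or one can use its second formulation with $\Delta(a_i,c_j) < \xi \le \alpha$. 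This yields $a_i <_L c_j$ iff $a_i \res \xi <_L c_j \res \xi$. Part (c) is the identical argument carried out in $S$ with the order $J$.

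The only real point is verifying that $\Delta < \xi$, and this follows immediately from the disjointness hypothesis. The hypothesis of unique drop-downs is not needed for the equivalence itself. It only ensures that $\vec x \res \xi$ and $\vec y \res \xi$ are again lex-increasing tuples (by Lemma \ref{lex increasing drop 1}), so that compatibility is defined for them. I expect no substantive obstacle.
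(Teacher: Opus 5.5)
Your proof is correct and is essentially the paper's approach. The paper only remarks that this lemma follows easily from Lemma \ref{easy lex lemma}, and your pairwise reduction (applying that lemma to $a_i \res \xi <_T a_i$, $c_j \res \xi <_T c_j$, and likewise in $S$) fills in that remark, including the correct observation that unique drop-downs is needed only so that the restricted tuples are lex-increasing.
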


Lemmas \ref{lex increasing drop 1} and \ref{compatible drop 1} 
both follow easily from Lemma \ref{easy lex lemma}.

Suppose that 
$\vec x = ((a_k,b_k) : k < n)$ and $\vec y = ((c_k,d_k) : k < n)$ 
are in $((T \res E) \otimes (S \res E))^{\otimes n}$, not necessarily of the same height. 
We say that $\vec x$ and $\vec y$ are \emph{componentwise incomparable} if 
for all $k < n$, $a_k$ and $c_k$ are incomparable in $T$ and 
$b_k$ and $d_k$ are incomparable in $S$. 
We say that $\vec x$ and $\vec y$ are \emph{fully incomparable} if for all $k, m < n$, 
$a_k$ and $c_m$ are incomparable in $T$ and $b_k$ and $d_m$ are incomparable in $S$. 

\section{Promises, 1}

We use a version of the method of promises of Shelah \cite[Ch.\ V, Sec.\ 6]{shelahbook}. 
The type of promises which we use for the proof of Theorem 1 is described next.

\begin{defn}[Promises, 1]
	Define $\Pcal$ to be the set of all $P$ satisfying that for some 
	$0 < n < \omega$ and $\delta \in E$:
	\begin{enumerate}
		\item $P$ is a non-empty downwards closed subset of the tree 
		$[(T \res (E \setminus \delta)) \otimes 
		(S \res (E \setminus \delta))]^{\otimes n}$;
		\item for all $\vec x \in P$, $\vec x$ is lex-increasing;
		\item for all $\vec x \in P$, 
		there are uncountably many elements of $P$ above $\vec x$.
	\end{enumerate}
	In the above, we refer to $\delta$ as the \emph{base level of $P$}, 
	which we denote by $\delta_P$. 
	The set of $P \in \Pcal$ with base level $\delta$ is denoted by $\Pcal_\delta$.
\end{defn}

Following Shelah, 
we informally refer to the members of $\Pcal$ as \emph{promises}.

\begin{prop} \label{promise existence start 1}
	Assume that $L^*$ and $J$ are not near. 
	For all $P \in \Pcal$ and for any $\vec x \in P$, there exists a family 
	$\{ \vec x_n : n < \omega \}$ of elements of $P$ of the same height above $\vec x$ 
	which is pairwise disjoint and pairwise compatible.
\end{prop}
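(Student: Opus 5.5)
The plan is to first pick an uncountable family of elements of $P$ above $\vec x$ that is pairwise disjoint and whose pairwise compatibility reduces to a single coordinate. The Countryman property of $L$ and $J$, combined with the hypothesis that $L^*$ and $J$ are not near, then gives an infinite compatible subfamily. Finally we raise everything to a common height.

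\emph{Step 1: an uncountable, fully incomparable family.} Let $\alpha$ be the height of $\vec x$. By clause (3) of the definition of $\Pcal$, there are uncountably many elements of $P$ above $\vec x$. Since $P$ is downwards closed, every element of $P$ has extensions in $P$ at every level of $E$ above it.

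\begin{itemize}
\item Choose $\vec y_\xi = ((c^\xi_k,d^\xi_k) : k<n) \in P$ above $\vec x$ for $\xi<\omega_1$, with strictly increasing heights.
\item Since $T$ and $S$ are Aronszajn, the usual $\Delta$-system and pressing-down argument refines this to an uncountable family that is pairwise fully incomparable. This is where the uncountably many predecessors at a fixed level must be dealt with.
\item Press down once more. We obtain a level $\beta \in E$ with $\alpha \le \beta$ such that every $\vec y_\xi$ has unique drop-downs to $\beta$, and $\vec y_\xi \res \beta = \vec z$ is the same tuple for all $\xi$. This works because the coordinates of a lex-increasing tuple are distinct, and $T_\beta$, $S_\beta$ are countable.
\end{itemize}

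Full incomparability makes any two $\vec y_\xi$, $\vec y_\eta$ disjoint.

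\emph{Step 2: reduction to the diagonal coordinates.} Take $i \neq j$. Then $c^\xi_i \res \beta = c^\xi_i \res \beta \neq c^\eta_j \res \beta$, since both restrictions are coordinates of $\vec z$. By Lemma \ref{easy lex lemma}, $c^\xi_i <_L c^\eta_j$ iff $i<j$, because $\vec z$ is lex-increasing. Likewise $d^\xi_i <_J d^\eta_j$ iff $i<j$.

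So $\vec y_\xi$ and $\vec y_\eta$ are compatible iff for each $k<n$,
\[
c^\xi_k <_L c^\eta_k \iff d^\xi_k <_J d^\eta_k .
\]

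Now use that $L^n$ and $J^n$ are countable unions of chains (the fact quoted in Section 2). Refine to an uncountable subfamily in which all tuples $(c^\xi_k : k<n)$ lie in one chain of $L^n$ and all tuples $(d^\xi_k : k<n)$ lie in one chain of $J^n$. In this subfamily, the comparison of $c^\xi_k$ with $c^\eta_k$ is the same for every $k$ as the comparison of $c^\xi_0$ with $c^\eta_0$, and similarly for the $d$'s. Hence compatibility of $\vec y_\xi$ and $\vec y_\eta$ is just the statement that $c^\xi_0 <_L c^\eta_0 \iff d^\xi_0 <_J d^\eta_0$.

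\emph{Step 3: Ramsey and non-nearness.} Color a pair $\{\xi,\eta\}$ red if the map $c^\xi_0 \mapsto d^\xi_0$ preserves the order between $\xi$ and $\eta$, and blue if it reverses it. By the Erdős–Dushnik–Miller theorem $\omega_1 \to (\omega_1,\omega)^2$, there is either an uncountable blue-homogeneous set or an infinite red-homogeneous set.

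\begin{itemize}
\item An uncountable blue set would give an anti-isomorphism between uncountable suborders of $L$ and $J$. The points involved are distinct by incomparability. This would make $L^*$ and $J$ near, contradicting the hypothesis.
\item So there is an infinite red set $\{\xi_m : m<\omega\}$, and the corresponding $\vec y_{\xi_m}$ are pairwise disjoint and pairwise compatible.
\end{itemize}

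\emph{Step 4: raising to a common height.} Let $\gamma \in E$ be at least the supremum of the countably many heights of the $\vec y_{\xi_m}$. Extend each $\vec y_{\xi_m}$ to some $\vec x_m \in P$ of height $\gamma$, which is possible by the remark in Step 1. Each $\vec x_m$ has unique drop-downs to the height of $\vec y_{\xi_m}$.

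Disjointness and compatibility transfer upward by Lemma \ref{easy lex lemma}, in the form used for Lemma \ref{compatible drop 1}, because the underlying coordinates are pairwise incomparable. Hence $\{\vec x_m : m<\omega\}$ is the required family.

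I expect the main obstacle to be Step 1: producing an uncountable family that is simultaneously pairwise fully incomparable and has a common drop-down $\vec z$. This needs a careful combination of the Aronszajn property with pressing-down. Once that is in place, Steps 2–4 follow directly from Lemma \ref{easy lex lemma}, the Countryman property, and the partition relation.
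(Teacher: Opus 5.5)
Your Steps 2--4 follow the paper's proof essentially line for line: you reduce compatibility to coordinate $0$ via chains in $L^n$ and $J^n$, apply $\omega_1 \to (\omega_1,\omega)^2$, rule out the uncountable reversing alternative because $L^*$ and $J$ are not near, and lift to a common height in $E$ using Lemma~\ref{easy lex lemma}. Comparing tuples of different heights directly, instead of comparing $\vec x_\alpha$ with $\vec x_\beta \res \alpha$ as the paper does, is harmless.

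The gap is in Step 1, exactly where you locate the main obstacle. The Aronszajn property together with $\Delta$-system or pressing-down arguments cannot give an uncountable family whose $k$-th coordinates are pairwise incomparable. If $T$ were a Suslin tree, no uncountable subset of $T$ would contain an uncountable antichain. In fact, by $\omega_1 \to (\omega_1,\omega)^2$, every uncountable subset would contain an infinite chain. So this step fails for general Aronszajn trees, and no amount of pressing down repairs it.

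The missing ingredient is that $T$ and $S$ are special. This is part of the standing hypotheses fixed at the end of Section~\ref{Compatibility of Countryman Lines}, and it is what the paper uses. Non-Suslinness would also suffice, and it follows from $L$ and $J$ being Countryman, but specialness makes the step immediate.

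Here is the repair. Write $\vec x = ((a_k,b_k) : k<n)$ with height $\alpha$, and fix specializing maps $\sigma_T : T \to \omega$ and $\sigma_S : S \to \omega$. Apply pigeonhole to the colour pattern $(\sigma_T(c^\xi_k),\sigma_S(d^\xi_k))_{k<n} \in \omega^{2n}$. This gives uncountably many $\xi$ on which, for each $k$, the nodes $c^\xi_k$ are pairwise incomparable: they share a colour, and they are distinct because the heights differ. The same holds for the $d^\xi_k$. For $i \neq j$, incomparability of $c^\xi_i$ and $c^\eta_j$ is automatic, since they restrict to $a_i \neq a_j$ at level $\alpha$.

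Your third bullet is also unnecessary. Taking $\beta = \alpha$ and $\vec z = \vec x$ already works, since every $\vec y_\xi$ lies above the lex-increasing, hence injective, tuple $\vec x$. With this repair the rest of your argument goes through as written.
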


\begin{proof}
	Write $\vec x = ((a_k,b_k) : k < n)$. 
	For each $\alpha \in E$ greater than the height of $\vec x$, 
	pick some $\vec x_\alpha \in P$ 
	with height $\alpha$ above $\vec x$. 
	Write each $\vec x_\alpha$ as $((a_k^\alpha,b_k^\alpha) : k < n)$. 
	Since $\vec x$ is injective and 
	$a_k <_T a_k^\alpha$ and $b_k <_T b_k^\alpha$ for all $\alpha$ and $k < n$, 
	it follows that for all distinct $k, l < n$ and for all $\alpha \le \beta$, 
	$a_k^\alpha$ and $a_l^\beta$ are incomparable in $T$ and 
	$b_k^\alpha$ and $b_l^\beta$ are incomparable in $S$. 

	Applying the fact that $T$ and $S$ are special in $2n$-many steps, 
	fix an uncountable set $X \subseteq E$ of ordinals greater than the height of $\vec x$ 
	so that for all $\alpha < \beta$ in $X$, $\vec x_\alpha$ and $\vec x_\beta$ 
	are componentwise incomparable. 
	In particular, for all $\alpha < \beta$ in $X$, $\vec x_\alpha$ and $\vec x_\beta$ 
	are fully incomparable and hence $\vec x_\alpha$ and 
	$\vec x_\beta \res \alpha$ are disjoint. 
	Using the fact that $L$ and $J$ are Countryman lines in two steps, 
	fix an uncountable set $Y \subseteq X$ such that 
	$\{ (a_0^\alpha,\ldots,a_{n-1}^\alpha) : \alpha \in Y \}$ is 
	a chain in $L^n$ and 
	$\{ (b_0^\alpha,\ldots,b_{n-1}^\alpha) : \alpha \in Y \}$ 
	is a chain in $J^n$.

	Define a function $F : [Y]^2 \to 2$ so that for all $\alpha < \beta$ in $Y$, 
	$F(\{ \alpha, \beta \})$ is equal to $1$ if 
	$\vec x_\alpha$ and $\vec x_\beta \res \alpha$ are compatible, and $0$ otherwise. 
	By the Dushnik-Miller theorem, there exists a set $Z \subseteq Y$ such that 
	either (a) $Z$ is infinite and for all $\alpha < \beta$ in $Z$, 
	$F(\{ \alpha, \beta \}) = 1$, 
	or else (b) $Z$ is uncountable and for all $\alpha < \beta$ in $Z$, 
	$F(\{ \alpha,\beta \}) = 0$.

	Suppose for a contradiction that (b) holds. 
	Consider any $\alpha < \beta$ in $Z$. 
	Since $F(\{ \alpha,\beta \}) = 0$, $\vec x_\alpha$ and $\vec x_\beta \res \alpha$ 
	are not compatible. 
	By Lemma \ref{easy lex lemma} and the fact that $\vec x$ is lex-increasing, 
	for any distinct $k, l < n$, 
	$a_k^\alpha <_L a_l^\beta$ iff $a_k <_L a_l$ iff $k < l$ iff 
	$b_k <_J b_l$ iff $b_k^\alpha <_J b_l^\beta$. 
	Consequently, there exists some $k < n$ such that either 
	$a_k^\alpha <_L a_k^\beta$ and $b_k^\alpha >_J b_k^\beta$, or 
	$a_k^\alpha >_L a_k^\beta$ and $b_k^\alpha <_J b_k^\beta$. 
	Since $\{ (a_0^\xi,\ldots,a_{n-1}^\xi) : \xi \in Z \}$ and 
	$\{ (b_0^\xi,\ldots,b_{n-1}^\xi) : \xi \in Z \}$ are each chains, 
	the last sentence is true for $0$ in place of $k$. 
	Therefore, $a_0^\alpha <_L a_0^\beta$ implies $b_0^\alpha >_J b_0^\beta$ 
	and $a_0^\alpha >_L a_0^\beta$ implies $b_0^\alpha <_J b_0^\beta$. 
	So the map $g : \{ a_0^\alpha : \alpha \in Z \} \to \{ b_0^\alpha : \alpha \in Z \}$ 
	defined by $g(a_0^\alpha) = b_0^\alpha$ for all $\alpha \in Z$ 
	is an order preserving function from an uncountable 
	subset of $L^*$ into $J$. 
	Hence $L^*$ and $J$ are near, which is a contradiction.
	
	So (a) holds. 
	Pick some $\delta \in E$ greater than the height of 
	$\vec x_\alpha$ for all $\alpha \in Z$. 
	For each $\alpha \in Z$, choose some $\vec x_\alpha^+ \in P$ 
	with height $\delta$ which is above $\vec x_\alpha$. 
	Now for all $\alpha < \beta$ in $Z$, $\vec x_\alpha$ and 
	$\vec x_\beta \res \alpha$ are disjoint and compatible. 
	By Lemma \ref{compatible drop 1}, $\vec x_\alpha^+$ and $\vec x_\beta^+$ 
	are disjoint and compatible.
\end{proof}

\section{The Working Part, 1}

The forcing $\p(T,S)$ adds the desired club and function by countable approximations. 
The next definition describes these objects.

\begin{defn}
	Define $\Fcal$ to be the set of functions of the form 
	$f : T \res c_f \to S \res c_f$, 
	where $c_f$ is a closed countable subset of $E$, satisfying:
	\begin{enumerate}
		\item $f$ is level preserving (that is, for all $x \in \dom(f)$, 
		$\h_T(x) = \h_S(f(x))$;
		\item for all $x, y \in \dom(f)$:
		\begin{enumerate}
			\item $x <_T y$ implies $x <_S y$;
			\item $x <_L y$ implies $f(x) <_J f(y)$.
		\end{enumerate}
	\end{enumerate}
	In the above, let $\alpha_f$ denote the maximum element of $c_f$.
\end{defn}

\begin{defn}
	Suppose that $0 < n < \omega$, $\xi < \omega_1$, 
	and $\vec x = ((a_k,b_k) : k < n)$ is in $(T_\xi \times S_{\xi})^n$. 
	For any $f \in \Fcal$ with $\xi \in c_f$, 
	we say that $f$ \emph{is consistent with $\vec x$} if for all $k < n$, $f(a_k) = b_k$.
\end{defn}

Note that if $f \in \Fcal$ is consistent with two disjoint tuples of the same height, 
then the tuples are compatible.

\begin{lemma} \label{compatible drop 1 bonus}
	Assume the following:
	\begin{enumerate}
		\item $f \in \Fcal$;
		\item $\alpha_f \le \delta \in E$;
		\item $\vec x$ and $\vec y$ are lex-increasing tuples in 
		$(T_\delta \times S_\delta)^{<\omega}$ each with unique drop-downs to 
		$\alpha_f$;
		\item $\vec x \res \alpha_f$ and $\vec y \res \alpha_f$ are disjoint;
		\item $f$ is consistent with $\vec x \res \alpha_f$ and $\vec y \res \alpha_f$.
	\end{enumerate}
	Then $\vec x$ and $\vec y$ are compatible. 
	Hence, there is a lex-increasing 
	tuple in $(T_\delta \times S_\delta)^{<\omega}$ which lists the elements 
	of $\vec x$ and $\vec y$.
\end{lemma}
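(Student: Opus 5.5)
The plan is to reduce to Lemma \ref{compatible drop 1}. Write $\xi = \alpha_f$. If $\xi = \delta$, the drop-downs are the identity, so $\vec x \res \xi = \vec x$ and $\vec y \res \xi = \vec y$, and no reduction is needed. If $\xi < \delta$, the hypotheses of Lemma \ref{compatible drop 1} hold: $\vec x$ and $\vec y$ are lex-increasing, lie in $(T_\delta \otimes S_\delta)^{<\omega}$ since all entries have height $\delta$, have unique drop-downs to $\xi$, and $\vec x \res \xi$ and $\vec y \res \xi$ are disjoint. So $\vec x$ and $\vec y$ are compatible iff $\vec x \res \xi$ and $\vec y \res \xi$ are compatible. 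In both cases it suffices to show that $\vec x \res \xi$ and $\vec y \res \xi$ are compatible.

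For this I use the remark after the definition of consistency: a function in $\Fcal$ that is consistent with two disjoint tuples of the same height makes them compatible. To check it, write $\vec x \res \xi = ((a_i,b_i) : i<m)$ and $\vec y \res \xi = ((c_j,d_j) : j<n)$. Consistency gives $f(a_i) = b_i$ and $f(c_j) = d_j$. Disjointness gives $a_i \ne c_j$, and these are distinct elements of the same level, so they are $<_L$-comparable.

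\begin{itemize}
\item If $a_i <_L c_j$, then clause (2)(b) of the definition of $\Fcal$ gives $b_i = f(a_i) <_J f(c_j) = d_j$.
\item If $c_j <_L a_i$, then similarly $d_j <_J b_i$. Since $b_i \ne d_j$ and $J$ is linear, $b_i <_J d_j$ fails.
\end{itemize}

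So $a_i <_L c_j$ iff $b_i <_J d_j$, which is exactly compatibility. For the final sentence of the lemma, apply the easy fact stated after the definition of compatibility: compatible disjoint tuples can be merged into one lex-increasing tuple. Explicitly, sort the union of the $T$-coordinates by $<_L$. Compatibility, together with each tuple being lex-increasing, ensures that the $S$-coordinates then appear in $<_J$-increasing order.

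No step is a real obstacle. The only care needed is the degenerate case $\xi = \delta$, which Lemma \ref{compatible drop 1} does not cover because it assumes $\xi < \alpha$, and which is handled directly as above.
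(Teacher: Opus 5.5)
Your proof is correct and takes the route the paper intends. The paper says only that the result ``follows easily from Lemma~\ref{compatible drop 1}'', relying on the remark that an $f \in \Fcal$ consistent with two disjoint tuples of the same height makes them compatible, which is exactly your argument. Your separate treatment of the degenerate case $\alpha_f = \delta$, which Lemma~\ref{compatible drop 1} does not cover since it assumes $\xi < \alpha$, is a detail the paper leaves implicit.
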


The proof follows easily from Lemma \ref{compatible drop 1}.

\begin{defn}
	Suppose that $f \in \Fcal$ and $P \in \Pcal$ has dimension $n$. 
	We say that $f$ \emph{fulfills $P$} if $\delta_P \le \alpha_f$ and 
	there exists 
	a pairwise disjoint family $\{ \vec x_n : n < \omega \}$ of 
	members of $P$ with height $\alpha_f$ such that for all $n < \omega$, 
	$f$ is consistent with $\vec x_n$.
\end{defn}

The next lemma assists with constructing members of $\Fcal$ in limit processes.

\begin{lemma} \label{assisting lemma}
	Assume the following:
	\begin{enumerate}
		\item $\langle \alpha_n : n < \omega \rangle$ is an increasing sequence 
		of ordinals in $E$ with supremum $\delta$;
		\item for each $n$:
		\begin{enumerate}
			\item $f_n \in \Fcal$ and $\alpha_n = \alpha_{f_n}$;
			\item $f_n \subseteq f_{n+1}$;
		\end{enumerate}
		\item $g : T \res c_g \to S \res c_g$, where 
		$c_g = (\bigcup_n c_{f_n}) \cup \{ \delta \}$;
		\item $g \res (T \res \delta) = \bigcup_n f_n$;
		\item $g \res T_\delta$ is a lexicographical-order preserving function mapping 
		into $S_{\delta}$;
		\item for all $x \in T_\delta$, there exists $n$ such that for all $m \ge n$, 
		$f_m(x \res \alpha_m) = g(x) \res \alpha_m$.
	\end{enumerate}
	Then $g \in \Fcal$.
\end{lemma}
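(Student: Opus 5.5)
We verify directly the clauses in the definition of $\Fcal$ for $g$.

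\emph{Domain and closedness.} Each $c_{f_n}$ is a closed countable subset of $E$ with maximum $\alpha_n$, and the $c_{f_n}$ increase. So $\bigcup_n c_{f_n}$ is countable and contained in $E\cap\delta$. Any limit point of it below $\delta$ lies below some $\alpha_n$, hence is a limit point of $c_{f_m}$ for large $m$, hence lies in $c_{f_m}$. Its only other possible limit point is $\delta=\sup_n\alpha_n$, which lies in $E$ because $E$ is club. Therefore $c_g$ is closed, countable, and contained in $E$, with $\alpha_g=\delta$. Level preservation holds on $T\res\delta$ because it holds for each $f_n$, and it holds on $T_\delta$ by (5).

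\emph{Order preservation on pairs.} Let $x,y\in\dom(g)$. If both have height below $\delta$, they lie in $\dom(f_m)$ for some $m$, and both clauses follow from $f_m\in\Fcal$. If both lie in $T_\delta$, they are incomparable, so only the lexicographic clause matters, and that is (5).

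So take $x\in T\res c_g$ with $\h(x)=\xi<\delta$ and $y\in T_\delta$. By (6), choose $m$ with $\alpha_m\ge\xi$ and $f_m(y\res\alpha_m)=g(y)\res\alpha_m$; we may also enlarge $m$ later. Note that $x\in\dom(f_m)$ and $g(x)=f_m(x)$.

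If $x<_T y$, then $x\le_T y\res\alpha_m$. Since $f_m$ preserves $\le_T$ (using level preservation when equality holds), $g(x)=f_m(x)\le_S f_m(y\res\alpha_m)=g(y)\res\alpha_m<_S g(y)$.

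Now suppose $x$ and $y$ are incomparable. We use the bound $\Delta(x,y)<\xi\le\alpha_m$. By Lemma~\ref{easy lex lemma}, $x<_L y$ iff $x<_L y\res\alpha_m$. Applying $f_m$, this gives $f_m(x)<_J f_m(y\res\alpha_m)=g(y)\res\alpha_m$. Enlarging $m$ if needed, we may ensure $g(y)\res\alpha_m\ne f_m(x\res\alpha_m)$ whenever the relevant elements are distinct at level $\alpha_m$. The point is that $f_m(x)$ and $g(y)\res\alpha_m$ have heights $\xi$ and $\alpha_m$.

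\emph{Main obstacle.} The subtle case is when $f_m(x)$ happens to be comparable with $g(y)$, i.e.\ $f_m(x)\le_S g(y)\res\xi$. In that case Lemma~\ref{easy lex lemma} does not transfer the lex relation, and $f_m(x)<_J g(y)$ would mean $f_m(x)<_S g(y)$, which is fine for $<_J$ only in one direction.

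To handle this we use injectivity. Since $x\ne y\res\xi$, and $f_\ell$ is injective on levels (it preserves $<_L$ strictly), we get $f_m(x)\ne f_m(y\res\xi)=g(y)\res\xi$. The last equality holds because $f_m(y\res\xi)\le_S f_m(y\res\alpha_m)=g(y)\res\alpha_m$ and the heights match. So $f_m(x)$ and $g(y)$ are incomparable.

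Now Lemma~\ref{easy lex lemma} applies in $S$. We have $g(x)<_J g(y)$ iff $f_m(x)<_J g(y)\res\alpha_m = f_m(y\res\alpha_m)$, and this holds iff $x<_L y\res\alpha_m$ iff $x<_L y$. The symmetric direction $y<_L x$ is identical. This completes the verification, so $g\in\Fcal$.
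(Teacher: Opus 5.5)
Your verification of the order clauses is correct. In the only nontrivial case ($\h(x)=\xi<\delta$, $y\in T_\delta$, with $x$ and $y$ incomparable) it takes a different route from the paper.

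\emph{The paper's route.} It splits by orientation.
\begin{enumerate}
\item If $x<_L y$, it simply chains $g(x)=f_m(x)<_J f_m(y\res\alpha_m)<_J g(y)$. The last step holds because an $S$-predecessor is lexicographically smaller, so no incomparability in $S$ is needed.
\item If $y<_L x$, it gets $f_m(y\res\alpha_m)<_J f_m(x)$. From this inequality it reads off that $f_m(x)$, the point of smaller height, cannot lie below $f_m(y\res\alpha_m)$, so the two are incomparable. Lemma \ref{easy lex lemma} then lifts the inequality to $g(y)$.
\end{enumerate}

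\emph{Your route.} You get incomparability of $f_m(x)$ and $g(y)$ directly from injectivity of $f_m$ on level $\xi$, via $f_m(y\res\xi)=g(y)\res\xi$. Then Lemma \ref{easy lex lemma}, together with the fact that a strictly order preserving map between linear orders also reflects the order, gives the biconditional $x<_L y\iff g(x)<_J g(y)$. This handles both orientations at once.

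\emph{Comparison.} Your argument is more symmetric. The paper's only ever evaluates $f_m$ at $x$ and on level $\alpha_m$. Yours also needs $y\res\xi\in\dom(f_m)$. That holds because domains in $\Fcal$ consist of full levels, provided you choose $m$ with $\xi\in c_{f_m}$. Say this explicitly: $\alpha_m\ge\xi$ alone does not put $x$ or $y\res\xi$ into $\dom(f_m)$. The sentence beginning ``Enlarging $m$ if needed, we may ensure \dots'' is meaningless and should be deleted; nothing depends on it.

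\emph{The closedness step.} One step in your first paragraph is not justified. From $f_n\subseteq f_{n+1}$ you only get $c_{f_n}\subseteq c_{f_{n+1}}$, and a later $c_{f_m}$ may add levels below $\alpha_n$. So a limit point of $\bigcup_n c_{f_n}$ below $\delta$ need not be a limit point of any single $c_{f_m}$. In fact the conclusion can then fail. Take $T=S$ and let $f_n$ be the identity on $T\res c_{f_n}$, where $c_{f_n}=\{\beta_0,\dots,\beta_n,\alpha_0,\dots,\alpha_n\}$ and the $\beta_j\in E$ increase to some $\gamma<\alpha_0$. All hypotheses hold, but $\gamma\notin c_g$, so $c_g$ is not closed.

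Your argument is correct under the natural extra hypothesis that the $c_{f_n}$ end-extend one another, that is, $c_{f_m}\cap(\alpha_n+1)=c_{f_n}$ for $m\ge n$. The paper's proof does not check closedness of $c_g$ at all. So this is a gap in the statement that you share with the paper, not a flaw of your method. You should state end-extension as an assumption rather than claim that closedness follows.
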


\begin{proof}
	Clearly, $g$ is level preserving. 
	Suppose that $c <_T d$ are in $\dom(g)$ and we show that 
	$g(c) <_S g(d)$. 
	This is obvious except when $d \in T_\delta$. 
	Fix $m$ large enough so that $\h_T(c) < \alpha_m$ and 
	$f_m(d \res \alpha_m) = g(d) \res \alpha_m$. 
	Then $g(c) = f_m(c) <_S f_m(d \res \alpha_m) <_S g(d)$.

	Now suppose that $c$ and $d$ are in $\dom(g)$ and $c <_L d$. 
	We prove that $g(c) <_J g(d)$. 
	This is immediate if $c, d \in T_\delta$ or if $c, d \in T \res \delta$. 
	If $c <_T d$, then by the above, $g(c) <_S g(d)$, so $g(c) <_J g(d)$. 
	Otherwise, $c$ and $d$ are incomparable in $T$.
	
	\emph{Case 1:} $c \in T \res \delta$ and $d \in T_\delta$. 
	Pick $m$ large enough so that 
	$\alpha_m > \h_T(c)$ and $f_m(d \res \alpha_m) = g(d) \res \alpha_m$. 
	By Lemma \ref{easy lex lemma}, $c <_L d$ implies that 
	$c <_L d \res \alpha_m$. 
	So $g(c) = f_m(c) <_J f_m(d \res \alpha_m)$, and 
	$f_m(d \res \alpha_m) <_S g(d)$ implies that 
	$f_m(d \res \alpha_m) <_J g(d)$.
	
	\emph{Case 2:} $c \in T_\delta$ and $d \in T \res \delta$. 
	Fix $m$ large enough so that $\alpha_m > \h_T(d)$ and 
	$f_m(c \res \alpha_m) = g(c) \res \alpha_m$. 
	Since $c$ and $d$ are incomparable, 
	by Lemma \ref{easy lex lemma} we have that $c \res \alpha_m <_L d$. 
	Hence, $f_m(c \res \alpha_m) <_J f_m(d)$. 
	In particular, $f_m(d) \not <_S f_m(c \res \alpha_m)$, so 
	$f_m(d)$ and $f_m(c \res \alpha_m)$ are incomparable in $S$. 
	By Lemma \ref{easy lex lemma} and the fact that $f_m(c \res \alpha_m) <_S g(c)$, 
	$g(c) <_J f_m(d) = g(d)$.
\end{proof}

\section{Extension, 1} \label{Extension, 1}

The next lemma allows us to stretch a condition up arbitrarily high in the trees. 
Since the generic function is only defined on a club of levels, such simple extensions do not 
require a limit process of the kind described in Lemma \ref{assisting lemma}. 
But we can only avoid the limit case temporarily, and we will confront it in 
Proposition \ref{adding promises 1} and Theorem \ref{totally proper 1} below.

\begin{lemma}[Extension, 1] \label{extension 1}
	Assume the following:
	\begin{enumerate}
		\item $f \in \Fcal$;
		\item $\Gamma \subseteq \Pcal$ is countable and $f$ fulfills every member of $\Gamma$;
		\item $\alpha_f < \delta \in E$;
		\item $\vec x$, $\vec y$, and $\vec z$ are in 
		$(T_{\delta} \times S_{\delta})^{<\omega}$, each with 
		unique drop-downs to $\alpha_f$;
		\item either $\vec y = \vec z = \emptyset$ or $\vec y$ and $\vec z$ 
		are disjoint and compatible;
		\item $\vec y \res \alpha_f = \vec z \res \alpha_f$;
		\item $\vec x \res \alpha_f$ is disjoint from $\vec y \res \alpha_f$;
		\item $f$ is consistent with $\vec x \res \alpha_f$ and $\vec y \res \alpha_f$.
	\end{enumerate}
	Then there exist $g \in \Fcal$ such that:
	\begin{enumerate}
		\item[(a)] $f \subseteq g$;
		\item[(b)] $c_g = c_f \cup \{ \delta \}$;
		\item[(b)] $g$ fulfills every member of $\Gamma$;
		\item[(c)] $g$ is consistent with $\vec x$, $\vec y$, and $\vec z$.
	\end{enumerate}
\end{lemma}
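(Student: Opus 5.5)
We have to define $g$ on $T_\delta$ only, since $c_g = c_f\cup\{\delta\}$ and $g\res(T\res\alpha_f+1)=f$. So the task is to construct a map $h : T_\delta \to S_\delta$ and set $g = f \cup h$. Membership of $g$ in $\Fcal$ reduces to two requirements. First, $h$ must be lexicographic-order preserving. Second, for each $x\in T_\delta$ we need $f(x\res\alpha_f) <_S h(x)$, and the same for restrictions to every $\gamma \in c_f$; this follows from the $\alpha_f$ case because $f$ is already tree-preserving. Given these, the argument of Cases 1 and 2 in Lemma \ref{assisting lemma}, applied with the single level $\alpha_f$, shows that $g$ preserves both $<_T$ and $<_L$ across levels. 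So the plan is to build $h$ as an order-preserving map from the countable linear order $T_\delta$ into $S_\delta$, subject to the constraint that $h(x)$ lies above $f(x\res\alpha_f)$.

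The prescribed values come first. Lemma \ref{compatible drop 1 bonus} applies to $\vec x$ and $\vec y$, using hypotheses (4), (7), (8), so $\vec x$ and $\vec y$ are compatible. Lemma \ref{compatible drop 1 bonus} applied to $\vec x$ and $\vec z$ works the same way, since $\vec z\res\alpha_f=\vec y\res\alpha_f$. Together with (5), there is a single lex-increasing tuple $\vec w$ listing all the pairs of $\vec x,\vec y,\vec z$. Its first coordinates are distinct and its second coordinates are distinct, so the partial function $h_0$ it defines is order preserving. By (8) and $\vec y\res\alpha_f = \vec z\res\alpha_f$, it also satisfies $f(a\res\alpha_f) <_S h_0(a)$. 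Note that $\vec y$ and $\vec z$ sit above the same pairs at level $\alpha_f$.

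For fulfillment, enumerate the pairs $(P,k)$ with $P\in\Gamma$ and $k<\omega$ as $\langle (P_i,k_i) : i<\omega\rangle$, each $P$ occurring infinitely often. Also enumerate $T_\delta$. We then build finite order-preserving partial maps $h_0\subseteq h_1\subseteq\cdots$, each respecting the constraint. At a step devoted to $P_i$, the hypothesis that $f$ fulfills $P_i$ provides infinitely many pairwise disjoint tuples at level $\alpha_f$ consistent with $f$. Since $h_j$ is finite and $\vec x\res\alpha_f$ is finite, we pick one such tuple $\vec u$ whose first and second coordinates avoid the finitely many restrictions to $\alpha_f$ of elements of $\dom(h_j)$ and $\mathrm{ran}(h_j)$. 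Property (3) of promises lets us extend $\vec u$ to some $\vec u^+\in P_i$ of height $\delta$, and $\vec u^+$ has unique drop-downs since $\vec u$ is injective. Lemma \ref{compatible drop 1 bonus} then says $\vec u^+$ is compatible with the tuple coding $h_j$, so we can add $\vec u^+$ to get $h_{j+1}$. The family of tuples added for $P$ is pairwise disjoint, because their drop-downs are distinct, and this gives fulfillment. The one point to check is that the tuple coding $h_j$ has unique drop-downs to $\alpha_f$. This may fail for $\vec y,\vec z$, which share drop-downs. However, the compatibility argument only needs, pair by pair, that distinct drop-downs decide the lex order via Lemma \ref{easy lex lemma}. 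The drop-downs of $\vec u$ are distinct from all drop-downs in $h_j$, so we compare pairwise rather than insisting on unique drop-downs globally.

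At a step devoted to an element $x\in T_\delta$ not yet in the domain, we must choose $h(x)\in S_\delta$ above $b:=f(x\res\alpha_f)$ and in the correct lex position relative to the finite map $h_j$. This is the main obstacle. Elements $x'\in\dom(h_j)$ with $x'\res\alpha_f\ne x\res\alpha_f$ impose no constraint: the order is already forced by $f$ via Lemma \ref{easy lex lemma}, and it agrees because $f$ is lex preserving. The elements sharing the drop-down $x\res\alpha_f$ are mapped to successors of $b$ at level $\delta$, and we must fit $h(x)$ into a prescribed gap among finitely many of them. Here we use the choice of $E$ from Lemma \ref{club lemma}: since $\delta\in E$ and $b\in S\res\delta$, the successors of $b$ in $S_\delta$ form a copy of $\mathbb{Q}$ in $J$. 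That order is dense without endpoints, so a value in any gap exists. In the limit, $h=\bigcup_j h_j$ is total on $T_\delta$ and order preserving, and $g=f\cup h$ satisfies (a)--(c).
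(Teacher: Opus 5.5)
Your proof is correct and uses the same ingredients as the paper's: the reduction to order-preservation on level $\delta$ (the paper's Cases a--c), the fact that cones above points of $S_{\alpha_f}$ at level $\delta \in E$ are copies of $\mathbb{Q}$, and lifting promise witnesses at height $\alpha_f$ that avoid finitely many drop-downs. The only difference is bookkeeping. The paper fixes the whole pairwise disjoint family $\vec w_m^+$ in advance and then defines $g$ separately on each cone above $x \in T_{\alpha_f}$, with at most two prescribed values per cone. This avoids the pairwise drop-down compatibility check that your interleaved finite-approximation construction needs, which you carry out correctly.
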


\begin{proof}
	Write $\vec y = ((a^0_k,b^0_k) : k < n)$ and 
	$\vec z = ((a^1_k,b^1_k) : k < n)$. 
	Let $\langle P_m : m < \omega \rangle$ be an enumeration of $\Gamma$ 
	such that every member appears infinitely often. 
	Since $f$ fulfills every member of $\Gamma$, 
	we can fix a pairwise disjoint sequence 
	$\langle \vec w_m : m < \omega \rangle$ satisfying that for all $m < \omega$:
	\begin{enumerate}
		\item $\vec w_m$ is in $P_m$ and has height $\alpha_f$;
		\item $\vec w_m$ is disjoint from $\vec x \res \alpha_f$ and 
		$\vec y \res \alpha_f$;
		\item $f$ is consistent with $\vec w_m$.
	\end{enumerate}
	For each $m < \omega$, fix $\vec w_m^+$ in $P_m$ of height $\delta$ 
	which is above $\vec w_m$. 
	It suffices to define a function $g \in \Fcal_{\delta}$ with $c_g = c_f \cup \{ \delta \}$ 
	such that $f \subseteq g$ and 
	$g$ is consistent with $\vec x$, 
	$\vec y$, $\vec z$, and $\vec w_m$ for all $m < \omega$.

	Consider $x \in T_{\alpha_f}$ and we define a function 
	$g_x$ from the set of the successors of $x$ in $T_\delta$ into 
	the successors of $f(x)$ in $S_{\delta}$. 
	Then we let $h = g \cup \{ g_x : x \in T_{\alpha_f} \}$. 
	We split into three mutually exclusive cases. 

	\emph{Case 1:} There exists a tuple 
	$((a_k,b_k) : k < p)$ in the set 
	$\{ \vec w_m^+ : m < \omega \} \cup \{ \vec x \}$ and 
	there exists $k < p$ such that $x <_T a_k$. 
	Since $f$ is consistent with this tuple's restriction to $\alpha_f$, 
	$x <_T a_k$ implies $f(x) <_S b_k$. 
	As $\delta \in E$, the set of successors of $f(x)$ in $S_{\delta}$, 
	considered as a suborder of $J$, is isomorphic to the rationals. 
	So we can fix a lexicographical-order preserving function 
	$g_x$ from the set of successors of $x$ in $T_{\delta}$ 
	into the set of successors of $f(x)$ in $S_{\delta}$ 
	such that $g_x(a_k) = b_k$.

	\emph{Case 2:} There exists $l < n$ such that $x <_T a^0_l$. 
	Then $x <_T a^1_l$, $f(x) <_S b^0_l$, and $f(x) <_S b^1_l$. 
	Let $Q$ be the set of successors of $f(x)$ in $S_\delta$. 
	Then again $Q$ is isomorphic to the rationals. 
	Since $\vec y$ and $\vec z$ 
	are compatible, $a^0_l <_L a^1_l$ iff $b^0_l <_J b^1_l$. 
	So we can fix a lexicographical-order preserving function 
	$g_x$ from the set of successors of $x$ in $T_{\delta}$ into $Q$ 
	such that $g_x(a^0_l) = b^0_l$ and $g_x(a^1_l) = b^1_l$.

	\emph{Case 3:} Neither case 1 nor case 2 hold. 
	Let $Q$ be the set of successors of $f(x)$ in $S_\delta$. 
	Fix a lexicographical-order preserving function $g_x$ from the set the successors of 
	$x$ in $T_\delta$ into the set of successors of 
	$f(x)$ in $S_{\delta}$.
	
	To show that $g \in \Fcal_\delta$, it suffices to show that $g$ is 
	lexicographical-order preserving. 
	Suppose that $c <_L d$ are in the domain of $g$ and 
	we prove that $g(c) <_J g(d)$. 
	This is immediate if either $c$ and $d$ are in $T \res (\alpha_f + 1)$, or if 
	$c$ and $d$ are in $T_\delta$ and $c \res \alpha_f = d \res \alpha_f$. 
	We consider the remaining possibilities.

	\emph{Case a:} $c$ and $d$ are in $T_{\delta}$ and $c \res \alpha_f \ne d \res \alpha_f$. 
	By Lemma \ref{easy lex lemma} and the fact that $f$ is lexicographical-order preserving, 
	\begin{multline*}
	c <_L d \Longleftrightarrow c \res \alpha_f <_L d \res \alpha_f \Longleftrightarrow \\ 
	g(c) \res \alpha_f = f(c \res \alpha_f) <_J f(d \res \alpha_f) = 
	g(d) \res \alpha_f 
	\Longleftrightarrow g(c) <_J g(d).
	\end{multline*}
	
	\emph{Case b:} $c \in T \res \delta$ and $d \in T_{\delta}$. 
	First, assume that $c <_T d$. 
	Then $c \le_T d \res \alpha_f <_T d$. 
	Now $c \le_T d \res \alpha_f$ implies that $c \le_L d \res \alpha_f$, 
	and hence $f(c) \le_J f(d \res \alpha_f)$. 
	And $d \res \alpha_f <_T d$ implies that $g(d \res \alpha_f) <_S g(d)$, and hence 
	$g(d \res \alpha_f) <_J g(d)$. 
	By transitivity, $g(c) <_J g(d)$. 
	Secondly, assume that $c$ and $d$ are incomparable. 
	By Lemma \ref{easy lex lemma}, 
	$c <_L d$ implies that $c <_L d \res \h_T(c)$ and hence 
	$f(c) <_J f(d \res \h_T(c))$. 
	On the other hand, $d \res \h_T(c) \le_T d \res \alpha_f$ implies that 
	$d \res \h_T(c) \le_L d \res \alpha_f$, and hence 	
	$f(d \res \h_T(c)) \le_J f(d \res \alpha_f)$. 
	Finally, $d \res \alpha_f <_T d$ implies that $g(d \res \alpha_f) <_S g(d)$, so 
	$g(d \res \alpha_f) <_J g(d)$. 
	By transitivity, $g(c) <_J g(d)$.
	
	\emph{Case c:} $c \in T_{\delta}$ and $d \in T \res \delta$. 
	Now $c \res \alpha_f <_T c$ implies that 
	$c \res \alpha_f <_L c <_L d$. 
	So $f(c \res \alpha_f) <_J f(d)$. 
	In particular, $f(d) \not \le_S f(c \res \alpha_f)$, so $f(d)$ and $f(c \res \alpha_f)$ 
	are incomparable in $S$. 
	But $f(c \res \alpha_f) <_S g(c)$, so by Lemma \ref{easy lex lemma}, 
	$g(c) <_J g(d)$ iff $g(c) \res \alpha_f = f(c \res \alpha_f) <_J f(d)$, which is true.
\end{proof}

\section{Adding Promises, 1}

The general idea of the promise method is that if properness of the forcing fails, 
then there exist a condition and a promise which witnesses this failure. 
On the other hand, this promise can be added to the condition, and the fact that the 
working part of the extended condition fulfills the promise implies that 
there could have been no failure to begin with (this vague argument is crystallized in 
Lemma \ref{extending into dense sets 1} below). 
For this method to work, there must be a way to add promises to conditions, 
a challenge which we handle now.

\begin{defn}
	For any $f \in \Fcal$ and $P \in \Pcal$, we say that $P$ is 
	\emph{suitable for $f$} if there exist $\alpha_f < \delta < \omega_1$, 
	$\langle \delta_n : n < \omega \rangle$, 
	and $\langle \vec x_s : s \in {}^{\underaccent{\breve}{\omega}} \omega \rangle$ satisfying:
	\begin{enumerate}
		\item $P \in \Pcal_\delta$;
		\item $\langle \delta_n : n < \omega \rangle$ is a strictly increasing sequence 
		of ordinals cofinal 
		in $\delta$ with $\delta_0 = \alpha$ and $\delta_n \in E$ for all $n$;
		\item for all $n < \omega$, 
		$\{ \vec x_s : s \in {}^n \omega \}$ is a pairwise disjoint and 
		pairwise compatible subset of 
		$(T_{\delta_n} \otimes S_{\delta_n})^{\otimes m}$, where $m$ 
		is the dimension of $P$;
		\item for all $s \subseteq t$ in ${}^{\underaccent{\breve}{\omega}} \omega$, 
		$\vec x_t \res \delta_{|s|} = \vec x_s$;
		\item for all $s \in {}^{\underaccent{\breve}{\omega}} \omega$, there exists 
		$\vec x_s^+ \in P \cap (T_\delta \otimes S_{\delta})^{\otimes m}$ 
		such that $\vec x_s^+ \res \delta_{|s|} = \vec x_s$;
		\item $f$ is consistent with $\vec x_\emptyset$.
	\end{enumerate}
\end{defn}

\begin{lemma}[Existence of Promises, 1] \label{existence of promises 1}
	Assume the following:
	\begin{enumerate}
		\item $f \in \Fcal$,
		\item $P \in \Pcal$;
		\item there exists $\vec x \in P$ with height greater than or equal to $\alpha_f$ 
		such that $\vec x$ has unique 
		drop-downs to $\alpha_f$ and $f$ is consistent with $\vec x \res \alpha_f$.
	\end{enumerate}
	Then there exists $Q \subseteq P$ in $\Pcal$ such that $Q$ is suitable for $f$.
\end{lemma}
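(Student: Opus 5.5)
The plan is to build the tree $\langle \vec x_s : s \in {}^{\underaccent{\breve}{\omega}} \omega\rangle$ level by level, using Proposition \ref{promise existence start 1} to split each node into infinitely many disjoint compatible copies. Then $\delta$ is the supremum of the levels, and $Q$ is $P$ cut off below $\delta$. I work under the standing assumption that $L^*$ and $J$ are not near, as Proposition \ref{promise existence start 1} requires.

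\emph{Initial step.} Fix $\vec x \in P$ as in hypothesis (3), let $m$ be the dimension of $P$, and put $\delta_0 = \alpha_f$ and $\vec x_\emptyset = \vec x \res \alpha_f$. By hypothesis $f$ is consistent with $\vec x_\emptyset$. Because $\vec x$ has unique drop-downs to $\alpha_f$, the tuple $\vec x_\emptyset$ is injective in each coordinate. By Lemma \ref{lex increasing drop 1}, $\vec x_\emptyset$ is lex-increasing. Set $\vec y_\emptyset = \vec x$.

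\emph{Recursive step.} Suppose that at stage $n$ we have, for each $s \in {}^n\omega$:
\begin{itemize}
\item tuples $\vec x_s$ of height $\delta_n \in E$ which are pairwise disjoint and pairwise compatible;
\item elements $\vec y_s \in P$ above $\vec x_s$ with unique drop-downs to $\delta_n$ and $\vec y_s \res \delta_n = \vec x_s$.
\end{itemize}
For each $s$, apply Proposition \ref{promise existence start 1} to $\vec y_s$. This gives a family $\{\vec z_{s,i} : i<\omega\} \subseteq P$ above $\vec y_s$, all of one height, which is pairwise disjoint and pairwise compatible.

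There are countably many $s$, so we can choose $\delta_{n+1} \in E$ above all these heights, above $\delta_n$, and large enough to drive the $\delta_n$ cofinally. Every element of $P$ has uncountably many extensions in $P$ and levels are countable, so each $\vec z_{s,i}$ extends to some $\vec x_{s^\frown i} \in P$ of height $\delta_{n+1}$.

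Next we verify the inductive requirements at level $n+1$.
\begin{itemize}
\item \emph{Same parent $s$.} Disjointness is preserved going up, since incomparable entries stay incomparable. Compatibility is preserved by Lemma \ref{compatible drop 1}, applied at the height of the $\vec z_{s,i}$.
\item \emph{Different parents $s \neq t$.} The tuples $\vec x_s$ and $\vec x_t$ are disjoint and compatible at level $\delta_n$, and both extensions have unique drop-downs to $\delta_n$. So Lemma \ref{compatible drop 1} gives that $\vec x_{s^\frown i}$ and $\vec x_{t^\frown j}$ are disjoint and compatible.
\item \emph{Coherence.} We have $\vec x_{s^\frown i} \res \delta_n = \vec x_s$, which gives clause (4) of suitability by induction.
\item \emph{Unique drop-downs.} Set $\vec y_{s^\frown i} = \vec x_{s^\frown i}$. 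It has unique drop-downs to $\delta_{n+1}$ trivially, because $\vec x_{s^\frown i}$ itself has height $\delta_{n+1}$.
\end{itemize}

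\emph{Defining $\delta$ and $Q$.} Let $\delta = \sup_n \delta_n$; since $E$ is club, $\delta \in E$. For clause (5), fix $s$. The node $\vec x_s$ lies in $P$ (or, when $s=\emptyset$, lies below $\vec x \in P$). It has uncountably many extensions in $P$, hence one of height $\delta$, which we take as $\vec x_s^+$. Restricting $\vec x_s^+$ down to $\delta_{|s|}$ gives back $\vec x_s$.

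Define
\[
Q = \{\vec w \in P : \h(\vec w) \ge \delta\}.
\]
Then $Q$ is nonempty, and it is downwards closed within $[(T\res(E\setminus\delta))\otimes(S\res(E\setminus\delta))]^{\otimes m}$ because $P$ is downwards closed. Every element of $Q$ is lex-increasing. Every element of $Q$ has uncountably many extensions in $Q$, since all but countably many of its extensions in $P$ have height at least $\delta$. So $Q \in \Pcal_\delta$, $Q \subseteq P$, and the constructed data witness that $Q$ is suitable for $f$.

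The main obstacle is the splitting step. It is exactly where Proposition \ref{promise existence start 1}, and hence non-nearness of $L^*$ and $J$, is needed. It also needs some bookkeeping: each $\vec y_s$ must be an actual member of $P$, so the proposition applies to it, while only its restriction $\vec x_s$ sits on level $\delta_n$. This is why the extensions $\vec y_s$ are carried along, rather than applying the proposition to $\vec x_\emptyset$, which need not lie in $P$ when $\alpha_f < \delta_P$.
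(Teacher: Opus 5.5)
Your proof is correct and follows the paper's construction: split each node with Proposition \ref{promise existence start 1}, lift the pieces to a common level $\delta_{n+1}\in E$, take $\delta=\sup_n\delta_n$, and choose the $\vec x_s^+$. The differences are minor. The paper takes $Q$ to be the union of the cones of $P$ above the $\vec x_s^+$ rather than all of $P$ at heights $\ge\delta$, and both choices work; your device of applying the proposition to $\vec x$ itself at the first stage correctly handles the case $\alpha_f<\delta_P$, where $\vec x\res\alpha_f\notin P$, which the paper's proof passes over.
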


\begin{proof}
	Using Proposition \ref{promise existence start 1}, construct by induction 
	sequences $\langle \delta_n : n < \omega \rangle$ and 
	$\langle \vec x_s : s \in {}^{\underaccent{\breve}{\omega}} \omega \rangle$ satisfying:
	\begin{enumerate}
		\item $\langle \delta_n : n < \omega \rangle$ is a strictly increasing sequence 
		of countable ordinals in $E$ with $\delta_0 = \alpha_f$;
		\item for all $n < \omega$, 
		$\{ \vec x_s : s \in {}^n \omega \}$ is a family of lex-increasing tuples in 
		$T_{\delta_n}^{m}$, where $m$ 
		is the dimension of $P$, which is pairwise disjoint and pairwise compatible;
		\item for all $s \subseteq t$ in ${}^{\underaccent{\breve}{\omega}} \omega$, 
		$\vec x_t \res \delta_{|s|} = \vec x_s$;
		\item $\vec x_{\emptyset} = \vec x \res \alpha_f$.
	\end{enumerate}
	For the base case, let $\delta_0 = \alpha_f$ 
	and $\vec x_{\emptyset} = \vec x \res \alpha_f$. 	
	Now let 
	$n < \omega$ and assume that $\delta_n$ and $\{ \vec x_s : s \in {}^n \omega \}$ 
	are defined as required. 
	Apply Proposition \ref{promise existence start 1} 
	to fix a pairwise disjoint and pairwise compatible family 
	$\{ \vec x_{s,l} : l < \omega \}$ of members of $P$ above $\vec x_s$. 
	Now fix $\delta_{n+1} \in E$ which is greater than $\delta_n$ and greater 
	than the height of 
	$\vec x_{s,l}$ for all $s \in {}^n \omega$ and $l < \omega$. 
	Then for each such $s$ and $l$, choose $\vec x_{s^{\frown} l}$ in $P$ 
	above $\vec x_{s,l}$ with height $\delta_{n+1}$. 
	This completes the induction. 
	Let $\delta = \sup_n \delta_n$. 
	For each $s \in {}^{\underaccent{\breve}{\omega}} \omega$, 
	pick some $\vec x_s^+ \in P$ with height $\delta$ 
	above $\vec x_s$. 
	Now define 
	$Q = \bigcup \{ P_{\vec x_s^+} : s \in {}^{\underaccent{\breve}{\omega}} \omega \}$.
\end{proof}

\begin{prop}[Adding Promises, 1] \label{adding promises 1}
	Assume the following:
	\begin{enumerate}
		\item $f \in \Fcal$;
		\item $\Gamma \subseteq \Pcal$ is countable and $f$ fulfills 
		every member of $\Gamma$;
		\item $P \in \Pcal$ is suitable for $f$ with witnesses 
		$\delta$, $\langle \delta_n : n < \omega \rangle$ 
		and $\langle \vec x_s : s \in {}^{\underaccent{\breve}{\omega}} \omega \rangle$;
		\item $f$ is consistent with $\vec x_\emptyset$.
	\end{enumerate}
	Then there exists $g \in \Fcal_{\delta_U}$ such that 
	$f \subseteq g$ and $g$ fulfills every member of $\Gamma \cup \{ P \}$.
\end{prop}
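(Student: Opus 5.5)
The plan is to build $g$ as the union of an increasing sequence $f = f_0 \subseteq f_1 \subseteq \cdots$ in $\Fcal$ with $\alpha_{f_n} = \delta_n$, plus a top level at $\delta = \sup_n \delta_n$. I read the conclusion "$g \in \Fcal_{\delta_U}$" as saying that $\alpha_g = \delta$ and $c_g = c_f \cup \{\delta_n : n \geq 1\} \cup \{\delta\}$. Each successor step $f_n \to f_{n+1}$ is one application of Lemma \ref{extension 1}, and the limit step is Lemma \ref{assisting lemma}. Note $\delta \in E$, since $E$ is club and every $\delta_n \in E$.

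\textbf{Targets at height $\delta$.} Before starting the induction, I fix countably many target tuples at height $\delta$ that $g$ must be consistent with.

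First, the tuples for $\Gamma$. Enumerate $\Gamma$ as $\langle P_m : m<\omega\rangle$ with each member appearing infinitely often. Since $f$ fulfills each $P_m$, choose pairwise disjoint $\vec w_m \in P_m$ of height $\alpha_f$ consistent with $f$, also disjoint from $\vec x_\emptyset$. Extend each to some $\vec w_m^+ \in P_m$ of height $\delta$.

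Second, the tuples for $P$, using a comb in the tree $\langle \vec x_s \rangle$. Let $1^n$ denote the constant sequence of length $n$ with value $1$. At step $n$ (from $\delta_n$ to $\delta_{n+1}$), apply Lemma \ref{extension 1} with $\vec y = \vec x_{1^{n+1}}^+$ and $\vec z = \vec x_{1^n{}^\frown 0}^+$. These are disjoint and compatible, since their restrictions to $\delta_{n+1}$ are, by Lemma \ref{compatible drop 1}. Both restrict at $\delta_n$ to $\vec x_{1^n}$, and $f_n$ is consistent with $\vec x_{1^n}$ by the previous step (by hypothesis when $n=0$). The tuples $\vec x^+_{1^n{}^\frown 0}$, $n<\omega$, lie in $P$, are pairwise disjoint at height $\delta$ because they are disjoint already at level $\delta_{n+1}$, and are consistent with every later $f_k$. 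So they will witness that $g$ fulfills $P$.

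\textbf{Committed pairs and the $\vec x$-slot.} At each step the $\vec x$-slot of Lemma \ref{extension 1} is a single lex-increasing tuple, obtained by merging finitely many committed tuples. This merging is allowed by Lemma \ref{compatible drop 1 bonus}, since all committed objects are consistent with $f_n$ at level $\delta_n$. The committed tuples at step $n$ are:
\begin{itemize}
\item[(i)] $\vec w_0^+,\dots,\vec w_n^+$;
\item[(ii)] the earlier comb tuples $\vec x^+_{1^k{}^\frown 0}$ for $k<n$;
\item[(iii)] finitely many pairs $(a_j,b_j) \in T_\delta \times S_\delta$, described in the next paragraph.
\end{itemize}
Each committed object is admitted only at a step $n$ large enough that all currently committed objects have unique drop-downs to $\delta_n$ and are pairwise disjoint there. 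This is possible because distinct nodes at level $\delta$ separate below $\delta$. Each object is introduced only after $f_n$ is already consistent with its restriction to $\delta_n$. Lemma \ref{extension 1} also guarantees that each $f_{n+1}$ fulfills all of $\Gamma$.

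\textbf{The limit level: main obstacle.} The hard part is hypotheses (5) and (6) of Lemma \ref{assisting lemma}. For each $a \in T_\delta$, the images $f_m(a\res\delta_m)$ form a chain in $S$, but this chain need not have an upper bound in $S_\delta$. My approach is to decide $g \res T_\delta$ along the way. Enumerate $T_\delta = \{a_j : j<\omega\}$. At some stage $n$ after $a_j$ has unique drop-downs relative to the finitely many already committed nodes, choose $b_j \in S_\delta$ above $f_n(a_j \res \delta_n)$ so that $b_j$ is in the correct $<_J$-position relative to the finitely many committed $b$'s lying above the same node. This is possible because $\delta\in E$, so the successors of $f_n(a_j\res\delta_n)$ in $S_\delta$ form a copy of the rationals. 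Committed nodes lying above other nodes are automatically ordered correctly, by Lemma \ref{easy lex lemma} and the fact that $f_n$ preserves $<_L$. The pair $(a_j,b_j)$ then joins the $\vec x$-slot from this step on, so $f_m(a_j\res\delta_m) = b_j\res\delta_m$ for all large $m$.

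Some nodes are themselves coordinates of the $\vec w^+$'s or comb tuples; for those, $b_j$ is already determined, and the consistency bookkeeping must treat them uniformly with the other committed pairs. Once all of $T_\delta$ has been handled, define $g\res T_\delta$ by $a_j \mapsto b_j$. This map is lex-preserving by construction, so Lemma \ref{assisting lemma} gives $g \in \Fcal$. The $\vec w_m^+$ then show that $g$ fulfills each member of $\Gamma$, and the comb tuples show that it fulfills $P$.
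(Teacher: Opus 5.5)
Your architecture matches the paper's: climb through the $\delta_n$ with Lemma~\ref{extension 1}, use its $\vec y,\vec z$ slot to split the tree $\langle \vec x_s\rangle$ and collect one tooth of a comb per stage, assign every node of $T_\delta$ an image, and close with Lemma~\ref{assisting lemma}. However, fixing the spine of the comb in advance as the branch $1^\omega$ is a genuine gap. Suitability only extends each \emph{finite} $\vec x_s$ to height $\delta$; nothing controls the infinite branch $\langle \vec x_{1^n} : n<\omega\rangle$. Suppose that for some coordinate $k$ the $T$-parts $a^{(n)}_k$ of the $\vec x_{1^n}$ have an upper bound $a\in T_\delta$. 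Then $a\res\delta_n$ is a first coordinate of $\vec y\res\delta_n=\vec x_{1^n}$ at every stage, so $a$ can never enter your $\vec x$-slot, because hypothesis (7) of Lemma~\ref{extension 1} fails. Worse, any $g\supseteq\bigcup_n f_n$ with $\delta\in c_g$ must place $g(a)$ above every $S$-part $b^{(n)}_k$ of $\vec x_{1^n}$. That chain need not have any upper bound in $S_\delta$, in which case no such $g$ exists. The paper avoids this by choosing the branch adaptively. At stage $n$ it picks the continuation $s_{n+1}$ and the tooth $s_{n,0}$ among the infinitely many pairwise disjoint $\vec x_t$ with $t\supseteq s_n$ at a later level. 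It requires both to be disjoint from the restrictions of everything already committed, \emph{and} requires that the $n$-th node $c_n$ of $T_\delta$ not restrict to a first coordinate of either. Since every node of $T_\delta$ is eventually treated this way, every node eventually leaves the spine. It can then be given any image above its image under the current $f_{n+1}$.

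A second, smaller error concerns the witnesses for $\Gamma$. You choose all $\vec w_m$ at height $\alpha_f$, using that $f$ fulfills $P_m$, and extend them to height $\delta$ at the outset, but you commit $\vec w_m^+$ only at stage $m$. For $m\ge 1$ the maps $f_1,\dots,f_m$ come out of Lemma~\ref{extension 1} with no reference to $\vec w_m^+$. So $f_m$ need not be consistent with $\vec w_m^+\res\delta_m$, and hypothesis (8) of Lemma~\ref{extension 1} fails when you try to commit it. Your claim that each object is introduced only after $f_n$ is consistent with its restriction is therefore unjustified for these tuples. You also cannot commit all of them from the start, since the slot is a finite tuple. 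The remedy is the paper's step three. At stage $n$, use that $f_n$ itself fulfills $P_n$ (which Lemma~\ref{extension 1} guarantees) to choose a witness of height $\delta_n$ disjoint from everything currently committed and from the current spine tuple, and only then extend it to height $\delta$. With both repairs your argument becomes essentially the paper's proof.
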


\begin{proof}
	Let $m$ be the dimension of $P$. 
	Let $\langle P_n : n < \omega \rangle$ be an enumeration of 
	$\Gamma$ in which each member appears infinitely many times, and 
	let $\langle c_n : n < \omega \rangle$ be an enumeration of $T_\delta$. 
	We define by induction $\alpha_n$, $\vec z_n$, $s_n$, and $f_n$ for 
	$n < \omega$, maintaining the following inductive hypotheses:
	\begin{itemize}
		\item $\delta_n \le \alpha_n < \omega_1 \cap \delta$;
		\item $\vec z_n$ is a lex-increasing tuple in 
		$(T_\delta \times T_{\delta})^{<\omega}$ with 
		unique drop-downs to $\alpha_n$;
		\item $s_n \in {}^{\underaccent{\breve}{\omega}} \omega$, $\vec x_{s_n}$ has height $\alpha_n$, 
		and for $n > 0$, $\vec x_{s_n}$ and $\vec z_n \res \alpha_n$ are disjoint 
		and compatible;
		\item $k < n$ implies $s_k \subseteq s_n$;
		\item $f_n \in \Fcal$;
		\item $\alpha_{f_n} = \alpha_n$;
		\item $f_n$ is consistent with $\vec z_n \res \alpha_n$ and $\vec x_{s_n}$;
		\item $f_n$ fulfills every member of $\Gamma$.
	\end{itemize}

	For the base case, let $\alpha_0 = \alpha_f$, 
	$\vec z_0$ is the empty tuple, $s_0 = \emptyset$, 
	and $f_0 = f$. 
	Now assume that $n < \omega$ and for all $m \le n$,  
	$\alpha_m$, $\vec z_m$, $s_m$, and $f_m$ are defined as required. 
	Write 
	$\vec z_n = ((a^n_k,b^n_k) : k < l)$. 
	Let $\alpha_{n+1}$ be the least ordinal satisfying:
	\begin{itemize}
		\item $\alpha_{n+1} > \alpha_n$;
		\item $\alpha_{n+1}$ is equal to $\delta_q$ for some $q \ge n+1$;
		\item $\{ a^n_k : k < l \} \cup \{ c_n \}$ has unique drop-downs to $\alpha_{n+1}$.
	\end{itemize}
	Pick $s_{n,0}$ and $s_{n,1}$ in ${}^q \omega$ such that for each $i < 2$, 
	$s_n \subseteq s_{n,i}$, $\vec x_{s_{n,i}}$ is disjoint from 
	$\vec z_n \res \alpha_{n+1}$, and $c_n \res \alpha_{n+1}$ is not the first coordinate 
	of any pair in $\vec x_{s_{n,i}}$.

	Apply Proposition \ref{extension 1} to fix $f_{n+1} \in \Fcal$ 
	such that $\alpha_{f_{n+1}} = \alpha_{n+1}$, 
	$f_n \subseteq f_{n+1}$, 
	$f_{n+1}$ fulfills every member of $\Gamma$, 
	and $f_{n+1}$ is consistent with $\vec z_n \res \alpha_{n+1}$, 
	$\vec x_{s_{n,0}}$, and $\vec x_{s_{n,1}}$. 
	Define $s_{n+1} = s_{n,1}$. 
	Pick some $\vec x_{s_{n,0}}^+ \in P$ above $\vec x_{s_{n,0}}$ with height $\delta$.
	
	We define $\vec z_{n+1}$ in three steps. 	
	For the first step, apply Lemma \ref{compatible drop 1 bonus} and 
	let $\vec z_n'$ be the lex-increasing tuple which lists the elements 
	of $\vec z_n$ and $\vec x_{s_{n,0}}^+$. 
	For the second step, we first ask whether $c_n$ is equal to $a_k^n$ for some $k < l$. 
	If so, then $c_n$ has already been handled, and we let $\vec z_n'' = \vec z_n'$ and 
	move on to step three. 
	Suppose not. 
	Since $f_{n+1}$ is consistent with $\vec z_n \res \alpha_{n+1}$ and is injective, 
	$f_{n+1}(c_n \res \alpha_{n+1})$ is not equal to $b_k^n \res \alpha_{n+1}$ for any $k < l$. 
	Similarly, $f_{n+1}(c_n \res \alpha_{n+1})$ is not the second coordinate of 
	any pair in $\vec x_{s_{n,i}}$, 
	for each $i < 2$. 
	Pick some $d \in S_{\delta}$ which is above $f_{n+1}(c_n \res \alpha_{n+1})$. 
	Applying Lemma \ref{compatible drop 1 bonus}, 
	let $\vec z_n''$ be the lex-increasing tuple which lists the elements of 
	$\vec z_n'$ together with $(c_n,d)$. 

	Now we describe step three. 
	Since $f_{n+1}$ fulfills $P_n$, fix some 
	$\vec w \in P_n$ of height $\alpha_{n+1}$ 
	which is consistent with $f_{n+1}$ and disjoint from $\vec z_n'' \res \alpha_{n+1}$ 
	and $\vec x_{s_{n,1}}$. 
	Fix $\vec w^+ \in P_n$ above $\vec w$ of height $\delta$. 
	Applying Lemma \ref{compatible drop 1 bonus}, 
	let $\vec z_{n+1}$ be the lex-increasing tuple which lists the elements of 
	$\vec w^+$ and $\vec z_n''$.

	This completes the induction. 
	Define $g \in \Fcal$ by letting $g \res (T \res \delta) = \bigcup_n f_n$ 
	and $g(c) = d$ iff for some $n < \omega$, 
	the pair $(c,d)$ appears in $\vec z_n$. 
	By Lemma \ref{assisting lemma}, $g \in \Fcal$, and $g$ fulfills $P$ as witnessed 
	by $\{ \vec x_{s_{n,0}}^+ : n < \omega \}$.
\end{proof}

\section{The First Poset}

We are now ready to introduce and derive the main properties of 
the forcing which we use to prove Theorem 1.

\begin{defn}
	Define $\p(T,S)$ to be the forcing consisting of conditions which are 
	ordered pairs $p = (f_p,\Gamma_p)$ satisfying:
	\begin{enumerate}
		\item $f_p \in \Fcal$;
		\item $\Gamma_p \subseteq \Pcal$ is countable;
		\item $f_p$ fulfills every member of $\Gamma_p$.
	\end{enumerate}
	For any $p \in \p(T,S)$, let $c_p = c_{f_p}$ and $\alpha_p = \alpha_{f_p}$. 
	Define $q \le p$ if 
	$f_p \subseteq f_q$ and $\Gamma_p \subseteq \Gamma_q$.
\end{defn}

The next two lemmas follow immediately from 
Lemmas \ref{extension 1} and \ref{adding promises 1} respectively.

\begin{lemma}[Poset Extension, 1] \label{extension poset 1}
	Assume the following:
	\begin{enumerate}
		\item $p \in \p(T,S)$;
		\item $\alpha_p < \delta \in E$;
		\item $\vec x$ is a lex-increasing tuple in 
		$(T_\delta \otimes S_{\delta})^{<\omega}$ with unique drop-downs to $\alpha_p$;
		\item $f_p$ is consistent with $\vec x \res \alpha_p$.
	\end{enumerate}
	Then there exists $q \le p$ such that $\alpha_q = \delta$ and $f_q$ 
	is consistent with $\vec x$.
\end{lemma}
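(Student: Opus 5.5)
The plan is to apply Lemma \ref{extension 1} directly to the working part $f_p$ and the countable family $\Gamma_p$, and then set $q = (g, \Gamma_p)$. Hypotheses (1)--(3) of Lemma \ref{extension 1} hold because $p$ is a condition: $f_p \in \Fcal$, $\Gamma_p$ is countable, $f_p$ fulfills every member of $\Gamma_p$, and $\alpha_p < \delta \in E$. For the tuples we take $\vec x$ as given and set $\vec y = \vec z = \emptyset$. This choice makes hypotheses (5) and (6) automatic. Hypothesis (7) holds vacuously, since $\vec x \res \alpha_p$ is disjoint from the empty tuple. Hypothesis (4) reduces to $\vec x$ having unique drop-downs to $\alpha_p$, and hypothesis (8) reduces to $f_p$ being consistent with $\vec x \res \alpha_p$; both are assumptions of the present lemma. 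The lex-increasing assumption on $\vec x$ is not strictly needed by Lemma \ref{extension 1}, but it matches the intended use. The one thing to confirm is that in Lemma \ref{extension 1} the empty tuples impose no constraint: Case 2 of its proof never arises, and the remaining cases only use $\vec x$ and the $\vec w_m^+$.

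Lemma \ref{extension 1} then gives $g \in \Fcal$ with $f_p \subseteq g$ and $c_g = c_{f_p} \cup \{\delta\}$, so $\alpha_g = \delta$. It also gives that $g$ fulfills every member of $\Gamma_p$ and that $g$ is consistent with $\vec x$. Let $q = (g, \Gamma_p)$. Then $q \in \p(T,S)$, because $g \in \Fcal$, $\Gamma_p$ is countable, and $g$ fulfills every member of $\Gamma_p$. We have $q \le p$ because $f_p \subseteq g$ and $\Gamma_p \subseteq \Gamma_p$. Finally, $\alpha_q = \delta$ and $f_q = g$ is consistent with $\vec x$, as required.

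I do not expect a genuine obstacle, since this is a direct specialization of Lemma \ref{extension 1}. The only point that needs care is checking that the degenerate choice $\vec y = \vec z = \emptyset$ is allowed by hypothesis (5) of that lemma, which it is by the explicit disjunct ``$\vec y = \vec z = \emptyset$''.
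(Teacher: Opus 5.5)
Your proposal is correct and is the paper's argument: the paper says this lemma follows immediately from Lemma \ref{extension 1}, which amounts to taking $\vec y = \vec z = \emptyset$ and letting $q = (g,\Gamma_p)$. You do not need to inspect the cases inside that lemma's proof, since its hypothesis (5) explicitly allows the empty tuples.
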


It follows that, assuming $\p(T,S)$ preserves $\omega_1$, 
any generic filter on $\p(T,S)$ yields a function as described in 
Section \ref{Compatibility of Countryman Lines}.

\begin{lemma}[Poset Adding Promises, 1] \label{adding promises to conditions 1}
	Assume the following:
	\begin{enumerate}
		\item $p \in \p(T,S)$;
		\item $P \in \Pcal$;
		\item $P$ is suitable for $f_p$.
	\end{enumerate}
	Then there exists $q \le p$ such that $P \in \Gamma_q$.
\end{lemma}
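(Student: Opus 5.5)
The plan is to apply Proposition \ref{adding promises 1} directly to the working part $f_p$ and then enlarge the promise set. Since $P$ is suitable for $f_p$, fix witnesses $\delta$, $\langle \delta_n : n < \omega \rangle$, and $\langle \vec x_s : s \in {}^{\underaccent{\breve}{\omega}} \omega \rangle$ as in the definition of suitability. Clause (6) of that definition says that $f_p$ is consistent with $\vec x_\emptyset$, which is exactly hypothesis (4) of Proposition \ref{adding promises 1}. Hypotheses (1) and (2) hold with $f = f_p$ and $\Gamma = \Gamma_p$, because $p$ is a condition: $f_p \in \Fcal$, $\Gamma_p$ is countable, and $f_p$ fulfills every member of $\Gamma_p$. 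Hypothesis (3) is our assumption that $P$ is suitable for $f_p$.

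Proposition \ref{adding promises 1} then produces $g \in \Fcal$ with $f_p \subseteq g$ such that $g$ fulfills every member of $\Gamma_p \cup \{ P \}$. From its proof, $\alpha_g = \delta$; the subscript $\delta_U$ in its statement should be read this way. Define $q = (g, \Gamma_p \cup \{ P \})$. To see that $q \in \p(T,S)$, note that $g \in \Fcal$, that $\Gamma_p \cup \{ P \}$ is a countable subset of $\Pcal$, and that $g$ fulfills each of its members. The ordering is immediate: $f_p \subseteq g$ and $\Gamma_p \subseteq \Gamma_q$, so $q \le p$. Finally, $P \in \Gamma_q$ by construction.

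No step here is a genuine obstacle, since all the work lies in Proposition \ref{adding promises 1}. The one point to check is that fulfillment of $P$ is meaningful for $g$, namely $\delta_P \le \alpha_g$. This holds because $P \in \Pcal_\delta$, so $\delta_P = \delta = \alpha_g$. Fulfillment of the promises in $\Gamma_p$ is not inherited automatically from $f_p$; it has to be re-established at the new top level $\alpha_g$. That is exactly what the conclusion of Proposition \ref{adding promises 1} provides.
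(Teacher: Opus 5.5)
Your proposal is correct and is exactly the paper's argument: the paper says this lemma follows immediately from Proposition~\ref{adding promises 1}, by taking $q = (g, \Gamma_p \cup \{P\})$ for the $g$ that the proposition provides. You also check the right details: reading $\Fcal_{\delta_U}$ as $\alpha_g = \delta$, noting that $\delta_P = \delta \le \alpha_g$, and noting that fulfillment of $\Gamma_p$ must be re-established at the new top level.
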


\begin{lemma} \label{extending into dense sets 1}
	Assume that $L^*$ and $J$ are not near. 
	Let $\lambda \ge (2^{|\p(T,S)|})^+$ be a regular cardinal and let 
	$N \prec H(\lambda)$ be countable such that $T$, $S$, $E$, and $\p(T,S)$ are in $N$. 
	Let $\delta = N \cap \omega_1$. 
	Assume the following:
	\begin{enumerate}
		\item $p \in N \cap \p(T,S)$;
		\item $D \in N$ is a dense subset of $\p(T,S)$;
		\item $\vec x$ is a lex-increasing tuple in 
		$(T_\delta \otimes S_\delta)^{<\omega}$ with unique drop-downs to $\alpha_p$;
		\item $f_p$ is consistent with $\vec x \res \alpha_p$.
	\end{enumerate}
	Then there exists $q \le p$ in $N \cap D$ such that 
	$f_q$ is consistent with $\vec x \res \alpha_q$.
\end{lemma}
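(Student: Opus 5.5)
The plan is the standard promise argument: argue by contradiction, reflect the failure into a promise in $N$, and then add that promise to $p$ so that fulfillment produces the desired extension. Write $\vec x = ((a_k,b_k) : k < n)$ and assume no $q \le p$ in $N \cap D$ has $f_q$ consistent with $\vec x \res \alpha_q$.

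\emph{Step 1: the set of bad tuples.} Let $n$ be the length of $\vec x$ and $\gamma = \alpha_p$. Let $R$ be the set of all lex-increasing tuples $\vec y \in ((T \res (E\setminus(\gamma+1)))\otimes (S\res (E \setminus(\gamma+1))))^{\otimes n}$ such that:
\begin{itemize}
\item $\vec y$ has unique drop-downs to $\gamma$;
\item $f_p$ is consistent with $\vec y \res \gamma$;
\item no $q \le p$ in $D$ with $\alpha_q \le \h(\vec y)$ has $f_q$ consistent with $\vec y \res \alpha_q$.
\end{itemize}
By elementarity $R \in N$. Let $\delta_0 = \min(E \setminus (\gamma+1))$ and replace $R$ by its restriction to tuples of height in $E \setminus \delta_0$. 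This restriction is downwards closed, because restricting a bad tuple to a lower level of $E$ above $\gamma$ keeps it bad: a witness $q$ for the restriction would also witness the original.

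The tuple $\vec x \res \delta_0$, and indeed $\vec x \res \beta$ for every $\beta \in E \cap \delta$ above $\gamma$, lies in $R$. Otherwise some $q \le p$ in $D$ would be consistent with $\vec x \res \alpha_q$. That $q$ could be taken in $N$: its existence is a statement about $\vec x \res \beta$ and $p$, both in $N$, and $\alpha_q \le \beta < \delta$, so elementarity finds such a $q$ in $N$. This contradicts our assumption.

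Now let $P \subseteq R$ consist of those $\vec y$ with uncountably many elements of $R$ above them. Then $P \in N \cap \Pcal$. It is nonempty and contains $\vec x \res \beta$ for all $\beta \in E\cap \delta$ above $\gamma$. The reason is the usual one: $\vec x \res \beta$ has an extension in $R$ of height $\delta \notin N$, so the set of elements of $R$ above it is not countable, since a countable such set would lie in $N$ and be bounded below $\delta$.

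\emph{Step 2: a suitable promise and a stronger condition.} Work inside $N$ with $\vec x \res \beta$ for a suitable $\beta$. Apply Lemma~\ref{existence of promises 1} to $f_p$, $P$ and $\vec x\res\beta$ (in $N$) to get $Q \subseteq P$ in $N\cap\Pcal$ suitable for $f_p$. By Lemma~\ref{adding promises to conditions 1} and elementarity, there is $p' \le p$ in $N$ with $Q \in \Gamma_{p'}$. Since $D$ is dense and in $N$, pick $q \in D \cap N$ with $q \le p'$. Then $f_q$ fulfills $Q$.

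\emph{Step 3: deriving the contradiction.} Fulfillment gives infinitely many pairwise disjoint $\vec w_i \in Q$ of height $\alpha_q$, each consistent with $f_q$. Finitely many coordinates are involved in the tuples $\vec w_i \res \gamma$ of length $n$, and these restrictions are consistent with $f_p$. Each $\vec w_i$ has uncountably many extensions in $R$, so choose such an extension $\vec w_i^+ \in R$ of height above $\alpha_q$. Then $q \le p$, $q \in D$, $\alpha_q \le \h(\vec w_i^+)$, and $f_q$ is consistent with $\vec w_i^+ \res \alpha_q = \vec w_i$. This contradicts the badness of $\vec w_i^+$. Only one such $\vec w_i$ is needed, so the main content is the consistency of $f_q$ with a member of $Q$ at its top level, which is exactly what fulfillment gives.

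\emph{Main obstacle.} The delicate point is the drop-down and injectivity bookkeeping: ensuring that the tuples in $R$ have unique drop-downs to $\gamma$, and that $\vec x\res\beta$ satisfies the hypotheses of Lemma~\ref{existence of promises 1} relative to $f_p$. This is where the assumption that $L^*$ and $J$ are not near enters, through Proposition~\ref{promise existence start 1} inside Lemma~\ref{existence of promises 1}. Defining $R$ with the base level $\delta_0 \in E$ makes $P$ lie in some $\Pcal_{\delta_0}$, as the definition of a promise requires.
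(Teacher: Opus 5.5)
Your proposal is correct and is essentially the paper's argument: a tree of bad tuples lying in $N$ is pruned to a promise containing the restrictions of $\vec x$, a suitable subpromise is obtained from Lemma~\ref{existence of promises 1} and added via Lemma~\ref{adding promises to conditions 1}, you extend into $D$, and fulfillment then produces a bad tuple consistent with a condition in $D$ below $p$. Your variations are harmless: you do not restrict to the cone above $\vec x \res \alpha_p$, you take base level $\min(E \setminus (\alpha_p+1))$ even though $\alpha_p \in E$ already, and you pass to an extension of $\vec w_i$ when $\vec w_i \in Q \subseteq R$ is already bad.

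One small correction: you cannot assert that $\vec x$ itself lies in $R$, since a condition outside $N$ with $\alpha_q = \delta$ might be consistent with $\vec x$. The uncountability of the part of $R$ above $\vec x \res \beta$ should instead come from the restrictions $\vec x \res \beta'$ with $\beta' \in E \cap \delta$; you already showed these lie in $R$, and their heights are cofinal in $\delta$.
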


\begin{proof}
	Suppose for a contradiction that the conclusion fails. 
	Let $n$ be the dimension of $\vec x$. 
	Define $P_0$ as the set of all lex-increasing tuples $\vec y$ in 
	$(((T \res E) \otimes (S \res E))^{\otimes n})_{\vec x \res \alpha_p}$ 
	such that for any $q \le p$ in $D$ with $\alpha_q$ less than 
	or equal to the height of $\vec y$, 
	$f_q$ is not consistent with $\vec y \res \alpha_q$. 
	Note that $P_0 \in N$ by elementarity, and $P_0$ is a downwards closed subset of the tree 
	$(((T \res E) \otimes (S \res E))^{\otimes n})_{\vec x \res \alpha_p}$. 
	We claim that for all $\alpha_p \le \gamma < \delta$, $\vec x \res \gamma \in P_0$. 
	Otherwise, there exists $q \le p$ in $D$ such that $\alpha_q \le \gamma$ 
	and $f_q$ is consistent with $(\vec x \res \gamma) \res \alpha_q = 
	\vec x \res \alpha_q$. 
	Since $p$, $D$, $\gamma$, and $\vec x \res \gamma$ are in $N$, by elementarity 
	we may assume that $q \in N$. 
	But then the conclusion of the lemma holds, which is a contradiction. 
	It follows that $P_0$ is uncountable. 
	Let $P \in N$ be an uncountable downwards closed subset of $P_0$ such that 
	every element of $P$ has uncountably many members of $P$ above it. 
	Note that $P \in \Pcal$ and $\vec x \res \alpha_p \in P$.	
	
	By Lemma \ref{existence of promises 1}, we can fix $Q \in \Pcal$ 
	such that $Q \subseteq P$ and $Q$ is suitable for $f_p$. 
	Since the root of $P_0$ is $\vec x \res \alpha_p$, 
	$\vec x \res \alpha_p$ is the only element of $P$ of height $\alpha_p$. 
	Hence, $\vec x \res \alpha_p$ is equal to $\vec x_\emptyset$, where 
	$\{ \vec x_s : s \in {}^{\underaccent{\breve}{\omega}} \omega \}$ 
	witnesses that $Q$ is suitable for $f_p$. 
	Applying Lemma \ref{adding promises to conditions 1}, 
	fix $q \le p$ such that $Q \in \Gamma_q$. 
	Since $D$ is dense, we can fix $r \le q$ in $D$. 
	As $f_r$ fulfills $Q$, there exists 
	$\vec y \in Q \cap (T_{\alpha_r} \otimes S_{\alpha_r})^{\otimes n}$ 
	such that $f_r$ is consistent with $\vec y$. 
	Since $Q \subseteq P_0$, $\vec y \in P_0$. 
	But $r \le p$ is in $D$ and 
	$f_r$ is consistent with $\vec y$, which contradicts the definition of $P_0$.
\end{proof}

\begin{thm} \label{totally proper 1}
	The forcing $\p(T,S)$ is totally proper.
\end{thm}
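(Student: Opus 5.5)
We prove the theorem under the standing assumption that $L^*$ and $J$ are not near, which is needed to invoke Lemma \ref{extending into dense sets 1}. The plan is to build a total master condition directly, by a fusion argument in the style of Proposition \ref{adding promises 1}, with Lemma \ref{extending into dense sets 1} replacing the extension steps.

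\textbf{Setup.} Fix $\lambda$ and a countable $N \prec H(\lambda)$ as in Lemma \ref{extending into dense sets 1}, let $\delta = N \cap \omega_1$, and fix $p \in N \cap \p(T,S)$. Since $E \in N$ is club, $\delta \in E$. Enumerate the dense subsets of $\p(T,S)$ lying in $N$ as $\langle D_n : n < \omega \rangle$, and enumerate $T_\delta$ as $\langle c_n : n < \omega \rangle$.

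\textbf{Inductive construction.} By induction we build conditions $p = p_0 \ge p_1 \ge \cdots$ in $N$, with $p_{n+1} \in D_n$, together with finite lex-increasing tuples $\vec z_n \in (T_\delta \otimes S_\delta)^{<\omega}$. We maintain the following hypotheses:
\begin{enumerate}
\item $\vec z_n$ has unique drop-downs to $\alpha_{p_n}$;
\item $f_{p_n}$ is consistent with $\vec z_n \res \alpha_{p_n}$;
\item $\vec z_n \subseteq \vec z_{n+1}$;
\item $c_n$ occurs as a first coordinate in $\vec z_{n+1}$.
\end{enumerate}
We also fix in advance a bookkeeping of pairs $(k,P)$ with $P \in \Gamma_{p_k}$, arranged so that each such $P$ is handled infinitely often. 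Since each $\Gamma_{p_k} \subseteq N$ is countable and lies in $N$, this list is countable.

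\textbf{The successor step.} Given $p_n$ and $\vec z_n$, carry out the following.
\begin{enumerate}
\item Choose $\gamma \in E \cap \delta$ above $\alpha_{p_n}$ such that $\vec z_n$ together with $c_n$ has unique drop-downs to $\gamma$. Apply Lemma \ref{extension poset 1} inside $N$, by elementarity applied to $\vec z_n \res \gamma$, to extend $p_n$ to a condition of height $\gamma$ consistent with $\vec z_n \res \gamma$.
\item If $c_n$ is new, pick $d \in S_\delta$ above $f(c_n \res \gamma)$. Injectivity of $f$ and Lemma \ref{compatible drop 1 bonus} let us append $(c_n,d)$ while keeping the tuple lex-increasing.
\item For the promise $P$ scheduled at stage $n$, use that the current working part fulfills $P$. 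Pick $\vec w \in P$ at the top level of the working part, consistent with it and disjoint from the current tuple restricted to that level. Extend $\vec w$ to some $\vec w^+ \in P$ of height $\delta$; this is possible because elements of $P \in N$ have uncountably many successors, so by elementarity successors exist at level $\delta$. Merge $\vec w^+$ into the tuple, again via Lemma \ref{compatible drop 1 bonus}.
\item Apply Lemma \ref{extending into dense sets 1} to the resulting condition and tuple to get $p_{n+1} \in N \cap D_n$ consistent with the tuple restricted to $\alpha_{p_{n+1}}$. Let $\vec z_{n+1}$ be the enlarged tuple. Unique drop-downs to $\alpha_{p_{n+1}} \ge \gamma$ persists.
\end{enumerate}

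\textbf{The limit step.} Let $g$ be $\bigcup_n f_{p_n}$ on $T \res \delta$, and on $T_\delta$ set $g(c) = d$ iff $(c,d)$ appears in some $\vec z_n$. This is a total function on $T_\delta$ by requirement (4). It is lex-preserving on $T_\delta$ because each $\vec z_n$ is lex-increasing and they cohere. Condition (6) of Lemma \ref{assisting lemma} holds by the consistency requirement (2). Hence $g \in \Fcal$ with $c_g = \bigcup_n c_{p_n} \cup \{\delta\}$. Put $q = (g, \bigcup_n \Gamma_{p_n})$.

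Then $g$ fulfills each $P \in \bigcup_n \Gamma_{p_n}$, witnessed by the infinitely many pairwise disjoint $\vec w^+$ chosen for $P$. Disjointness holds because each new $\vec w$ was chosen disjoint from the current tuple. So $q$ is a condition below every $p_n$. For each dense $D_n \in N$ we have $p_{n+1} \in N \cap D_n$ with $q \le p_{n+1}$, so $q$ is a total master condition. Since $p \in N$ was arbitrary, $\p(T,S)$ is totally proper.

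\textbf{Main obstacle.} I expect the difficulty to lie in the bookkeeping of the successor step. The new tuples must remain pairwise disjoint, have unique drop-downs to the new height, and stay compatible, so that Lemma \ref{compatible drop 1 bonus} applies at each merge. Also, the hypotheses of Lemma \ref{extending into dense sets 1} must hold exactly: the current tuple needs unique drop-downs to the current $\alpha$, and the working part must be consistent with its restriction. The extra step of first extending inside $N$ to a level $\gamma$ is intended to secure the drop-down condition for the newly added $c_n$ and $\vec w^+$ before passing into $D_n$.
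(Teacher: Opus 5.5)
Your proposal is correct and follows essentially the same fusion argument as the paper. You build a descending sequence in $N$ that meets each $D_n$ via Lemma \ref{extending into dense sets 1}, together with finite lex-increasing tuples at level $\delta$ that absorb each $c_n$ and infinitely many disjoint $\vec w^+$ for every promise, and you glue these together with Lemma \ref{assisting lemma}.

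The differences are cosmetic:
\begin{itemize}
\item You raise the height inside $N$ before meeting $D_n$. The paper meets $D_n$ first and then raises the height, which uses the fact that $D_n$ is open.
\item The paper enumerates all of $N \cap \Pcal$ in advance. This avoids your ``fix in advance'' bookkeeping over the $\Gamma_{p_k}$, which cannot literally be fixed before the $p_k$ are built, though dovetailing repairs it.
\end{itemize}

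You should also state explicitly that $\sup_n \alpha_{p_n} = \delta$ before invoking Lemma \ref{assisting lemma}. This holds because each set $\{q : \alpha_q > \beta\}$ with $\beta < \delta$ is dense, by Lemma \ref{extension poset 1}, and lies in $N$.
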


\begin{proof}
	Let $\lambda \ge (2^{|\p(T,S)|})^+$ be a regular cardinal and let 
	$N \prec H(\lambda)$ be countable such that $T$, $S$, $E$, and 
	$\p(T,S)$ are in $N$. 
	Let $\delta = N \cap \omega_1$. 
	Consider $p \in N \cap \p(T,S)$. 
	We claim that there exists $q \le p$ such that $q$ is a total master condition 
	for $N$ and $\p(T,S)$. 
	Specifically, we build by induction a descending sequence of conditions 
	$\langle p_n : n < \omega \rangle$ in $N \cap \p(T,S)$ such that for every dense 
	set $D \in N$, there exists some $n$ with $p_n \in D$, together with 
	a lower bound $q$ of this sequence.
	
	Fix the following objects:
	\begin{itemize}
		\item an enumeration $\langle P_n : n < \omega \rangle$ of 
		$N \cap \Pcal$ in which each element appears infinitely many times;
		\item an enumeration $\langle c_n : n < \omega \rangle$ of $T_\delta$;
		\item a sequence $\langle D_n : n < \omega \rangle$ which lists every 
		dense open subset 
		of $\p(T,S)$ which lies in $N$.
	\end{itemize}
	We define by induction a descending sequence $\langle p_n : n < \omega \rangle$ 
	of conditions in $N \cap \p(T,S)$ together with a sequence 
	$\langle \vec x_n : n < \omega \rangle$ of lex-increasing tuples in $T_\delta^{<\omega}$ 
	as follows.
	
	For the base case, let $p_0 = p$ and let $\vec x_0$ be the empty tuple. 
	Now assume that $p_n$ and $\vec x_n$ are defined for some $n < \omega$ 
	such that $\vec x_n$ has 
	unique drop-downs to $\alpha_{p_n}$ 
	and $f_{p_n}$ is consistent with $\vec x_n \res \alpha_{p_n}$. 
	Write $\vec x_n = ((a^n_k,b^n_k) : k < l)$.
	We define $p_{n+1}$ and $\vec x_{n+1}$ in three steps. 	
	In the first step, apply Lemma \ref{extending into dense sets 1} to fix 
	$p_{n+1} \le p_n$ in $N \cap D_n$ such that 
	$f_{p_{n+1}}$ is consistent with $\vec x_n \res \alpha_{p_{n+1}}$. 
	Moreover, using Lemma \ref{extension poset 1} we may assume without loss of 
	generality that $\alpha_{p_{n+1}}$ is high enough that 
	the set $\{ a^n_k : k < l \} \cup \{ c_n \}$ has unique 
	drop-downs to $\alpha_{p_{n+1}}$.
	
	For the second step, 
	we ask whether $c_n$ is equal to $a_k^n$ for some $k < l$. 
	If yes, then $c_n$ has already been handled, so we let $\vec x_n' = \vec x_n$ and 
	move on to step three. 
	Suppose not. 
	Since $f_{p_{n+1}}$ is consistent with 
	$\vec x_n \res \alpha_{p_{n+1}}$ and is injective, 
	$f_{p_{n+1}}(c_n \res \alpha_{p_{n+1}})$ is not equal to 
	$b_k^n \res \alpha_{p_{n+1}}$ for any $k < l$. 
	Pick some $d \in S_{\delta}$ which is above 
	$f_{p_{n+1}}(c_n \res \alpha_{p_{n+1}})$. 	
	By Lemma \ref{easy lex lemma} and the fact that $f_{n+1}$ is consistent with 
	$\vec x_n \res \alpha_{n+1}$, for all $k < l$ we have that 
	\begin{multline*}
	a_k^n <_L c_n \iff a_k^n \res \alpha_{p_{n+1}} <_L c_n \res \alpha_{p_{n+1}} \\ \iff 
	b_k^n \res \alpha_{p_{n+1}} <_J f_{n+1}(c_n \res \alpha_{p_{n+1}}) \iff 
	b_k^n <_J d.
	\end{multline*}
	Hence, the tuples $\vec x_n$ and $((c_n,d))$ are compatible. 
	Let $\vec x_n'$ be the lex-increasing tuple which lists the elements 
	of $\vec x_n$ together with $(c_n,d)$. 

	Now we describe step three. 
	If $P_n \notin \Gamma_{p_{n+1}}$, then let $\vec x_{n+1} = \vec x_n'$ and we are done. 
	Otherwise, since $f_{p_{n+1}}$ fulfills $P_n$, fix some $\vec w \in P_n$ 
	of height $\alpha_{p_{n+1}}$ which is disjoint from 
	$\vec x_n' \res \alpha_{p_{n+1}}$ 
	such that $f_{p_{n+1}}$ is consistent with $\vec w$. 
	Fix $\vec w^+ \in P_n$ above $\vec w$ of height $\delta$. 
	By Lemma \ref{compatible drop 1 bonus}, 
	we can let $\vec x_{n+1}$ be the lex-increasing tuple which lists the elements of 
	$\vec w^+$ and $\vec x_n'$.

	This completes the induction. 
	Note that by Lemma \ref{extension poset 1} and 
	the fact that the decreasing sequence meets every dense set in $N$, 
	$\sup_{n} \alpha_{p_n} = \delta$. 
	Define $q$ as follows. 
	Let $f_{q} \res (T \res \delta) = \bigcup_n f_{p_n}$ and 
	for all $c \in T_\delta$, $f_q(c) = d$ iff for some $n < \omega$, 
	$(c,d)$ appears in $\vec x_n$. 
	By Lemma \ref{assisting lemma}, $f_q \in \Fcal$. 
	Let $\Gamma_q = \bigcup_n \Gamma_{p_n}$. 
	Then $q = (f_q,\Gamma_q)$ is in $\p(T,S)$ and is a lower bound of 
	$\langle p_n : n < \omega \rangle$.
\end{proof}

\section{Making $\Ucal(T)$ an Ultrafilter} \label{Making U(T) an Ultrafilter}

We now turn to the second main theorem of the article, which is the consistency 
of the statement that \textsf{CH} holds and 
for any coherent Aronszajn tree $T \subseteq {}^{\underaccent{\breve}{\omega}_1} \omega$, 
$\Ucal(T)$ is an ultrafilter. 
Fix such a tree $T$ and consider a set $X \subseteq \omega_1$. 
If $\omega_1 \setminus X \in \Ucal(T)$, then $X$ is already handled. 
Otherwise, every member of $\Ucal(T)$ has non-empty intersection with $X$, 
that is, $X \in \Ucal(T)^+$. 
In this case, our goal is to define a forcing $\p(T,X)$ which is totally proper and 
adds an uncountable antichain $A \subseteq T$ such that $\Delta (A) \subseteq X$. 
Rather than adding the antichain explicitly, we design a forcing which adds 
an uncountable subtree which splits only at levels in $X$. 
This approach is justified by the following lemma.

\begin{lemma}
	Suppose that $T \subseteq {}^{\underaccent{\breve}{\omega}_1} \omega$ is a non-Suslin 
	coherent Aronszajn tree. 
	For any set $X \subseteq \omega_1$, $X \in \Ucal(T)$ iff there exists an uncountable 
	downwards closed subtree $U$ of $T$ such that for every $x \in U$, if $x$ 
	splits in $U$ then $\h_T(x) \in X$.
\end{lemma}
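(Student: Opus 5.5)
Both directions are direct, and I would prove them separately. (The paper's definition of $\Ucal(T)$ has the typo $\Delta(T) \subseteq X$; I read it as $\Delta(A) \subseteq X$.)

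\emph{Backward direction.} Suppose $U \subseteq T$ is an uncountable downwards closed subtree all of whose splitting nodes have height in $X$.
\begin{itemize}
\item Since $U$ is an uncountable subset of $T$ and $T$ is non-Suslin, $U$ contains an uncountable antichain $A$ of $T$.
\item Take distinct $x, y \in A$. They are incomparable, so their meet $x \wedge y = x \res \Delta(x,y)$ lies in $U$ by downward closure.
\item The nodes $x \res (\Delta(x,y)+1)$ and $y \res (\Delta(x,y)+1)$ are distinct immediate successors of $x \wedge y$, and both lie in $U$. So $x \wedge y$ splits in $U$.
\item Hence $\Delta(x,y) = \h_T(x \wedge y) \in X$. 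Therefore $\Delta(A) \subseteq X$, and $X \in \Ucal(T)$.
\end{itemize}

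\emph{Forward direction.} Suppose $A \subseteq T$ is an uncountable antichain with $\Delta(A) \subseteq X$. The natural candidate is the downward closure $U = \{ a \res \xi : a \in A, \ \xi \le \h_T(a) \}$. It is downwards closed and uncountable.

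Take $u \in U$ that splits in $U$, with distinct immediate successors $u_0, u_1 \in U$. Choose $a, b \in A$ with $u_0 \le_T a$ and $u_1 \le_T b$.
\begin{itemize}
\item Then $a \ne b$, since $u_0$ and $u_1$ are incomparable.
\item Also $a \res \h_T(u) = u = b \res \h_T(u)$, while $a(\h_T(u)) = u_0(\h_T(u)) \ne u_1(\h_T(u)) = b(\h_T(u))$.
\item So $\Delta(a,b) = \h_T(u)$, and this ordinal lies in $\Delta(A) \subseteq X$.
\end{itemize}
Hence every splitting node of $U$ has height in $X$.

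\emph{Where care is needed.} The only point to check is that $u_0$ and $u_1$ witness a genuine divergence of $a$ and $b$ at height exactly $\h_T(u)$. This holds because elements of $T$ are functions and immediate successors of $u$ differ exactly at the coordinate $\h_T(u)$.

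Non-Suslinness is used only in the backward direction. Coherence is not needed for this lemma; it matters only for $\Ucal(T)$ being a filter.
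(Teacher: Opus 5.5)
Your proof is correct and follows the paper's argument essentially verbatim. The paper takes $U$ to be the set of strict predecessors of an antichain $A$ witnessing $X \in \Ucal(T)$ and identifies each splitting height with some $\Delta(y,z)$; for the converse it uses non-Suslinness to find an uncountable antichain in $U$ whose pairwise meets split in $U$. Your only deviation, including $A$ itself in $U$, is immaterial, and you are right that $\Delta(T) \subseteq X$ in the definition of $\Ucal(T)$ is a typo for $\Delta(A) \subseteq X$.
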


\begin{proof}
	Suppose that $X \in \Ucal(T)$. 
	Fix an uncountable antichain $A \subseteq T$ 
	such that $\Delta(A) \subseteq X$. 
	Let $U = \{ x \in T : \exists y \in A \; x <_T y \}$. 
	Then $U$ is uncountable. 
	For any splitting element $x$ of $U$, the height of $x$ is equal to $\Delta(y,z)$ for 
	any $y, z \in A$ which are above two distinct immediate successors of $x$ in $U$. 
	Hence, the height of any splitting element of $U$ is in $X$. 
	Conversely, suppose that $U$ is an uncountable downwards closed subtree of $T$ 
	such that the height of any splitting element of $U$ is in $X$. 
	Since $T$ is non-Suslin, we can fix an uncountable antichain $A \subseteq U$. 
	If $y$ and $z$ are distinct members of $A$, then 
	$y \res \Delta(y,z) = z \res \Delta(y,z)$ splits in $U$, 
	and hence its height, which is $\Delta(y,z)$, is in $X$. 
\end{proof}

We now describe how to obtain the desired consistency result. 
We iterate with countable support of length $\omega_2$ over a model of 
$2^\omega = \omega_1$ and $2^{\omega_1} = \omega_2$, 
alternating between forcings which specialize Aronszajn trees and 
forcings of the form $\p(T,X)$, where $T$ is as above and $X \in \Ucal(T)^+$. 
Both types of forcings have size 
$2^{\omega_1}$ and are $(<\! \omega_1)$-proper, Dee-complete, and $\omega_2$-p.i.c. 
Bookkeeping to handle all relevant trees and sets, we arrange that the final 
iteration forces that all Aronszajn trees are special and that 
$\Ucal(T)$ is an ultrafilter for any coherent Aronzajn tree 
$T \subseteq {}^{\underaccent{\breve}{\omega}_1} \omega$. 
As in the first application, the core result about $\p(T,X)$ is that it is totally proper, 
and we leave the verification of the other properties to the interested reader 
(see the comments towards the end of Section \ref{Compatibility of Countryman Lines}). 
The structure of our proof is quite similar to that of Theorem 1.

We make use of the following basic result about coherent Aronszajn trees. 
For a proof, see the arguments of \cite[Lems. 4.2.4, 4.2.5]{todorbook}.

\begin{lemma}[Fundamental Lemma on Coherent Trees] \label{coherent trees lemma}
	Suppose that $T \subseteq {}^{\underaccent{\breve}{\omega}_1} \omega$ is a coherent Aronszajn tree 
	which is non-Suslin. 
	Let $\langle \vec x_\alpha : \alpha < \omega_1 \rangle$ be a sequence of distinct 
	elements of $T^{n}$ for some $0 < n < \omega$. 
	Write each $\vec x_\alpha$ as 
	$( a_0^\alpha, \ldots, a_{n-1}^\alpha )$. 
	Then there exists an uncountable set $Y \subseteq \omega_1$ such that 
	for all $\alpha < \beta$ in $Y$, 
	there exists $\xi < \omega_1$ such that for all $k < n$, 
	$a_k^\alpha$ and $a_k^\beta$ are incomparable and 
	$\Delta(a^\alpha_k,a^\beta_k) = \xi$.
\end{lemma}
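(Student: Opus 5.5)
The plan is to push all the interaction between coordinates into a finite set of ordinals by using coherence, and then make that finite set harmless by a $\Delta$-system argument. I will also use the non-Suslin hypothesis to control where the meets occur.

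I read the statement as intended in the situation where it is applied, namely with the heights of the coordinates $a^\alpha_k$ unbounded in $\omega_1$ as $\alpha$ varies. If, say, $a_0^\alpha$ were constant on an uncountable set, the conclusion would fail outright, so mere distinctness of the tuples cannot be enough. Thinning out, I will assume that for $\alpha<\beta$ every $a^\beta_k$ has height above every $a^\alpha_l$. Let $h_\alpha$ be the minimum height of $a_0^\alpha,\ldots,a_{n-1}^\alpha$, and set $b^\alpha_k = a^\alpha_k \res h_\alpha$. By the observation before Lemma \ref{easy lex lemma}, if $b^\alpha_k$ and $b^\beta_k$ are incomparable then so are $a^\alpha_k$ and $a^\beta_k$, and $\Delta(a^\alpha_k,a^\beta_k)=\Delta(b^\alpha_k,b^\beta_k)$. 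So it suffices to work with the tuples $\vec b^\alpha=(b_k^\alpha:k<n)$, all of whose entries have the same height $h_\alpha$.

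\emph{Step 1 (coherence and a $\Delta$-system).} Fix a continuous $\in$-chain $\langle N_\nu:\nu<\omega_1\rangle$ of countable elementary submodels containing the sequence. Thin out so that for $\alpha<\beta$ in our set, $h_\alpha<\delta_\beta:=N_\beta\cap\omega_1\le h_\beta$.

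By coherence, the set
\[
F_\alpha=\{\zeta<h_\alpha:\exists k<n\ b^\alpha_k(\zeta)\neq b^\alpha_0(\zeta)\}
\]
is finite. By pressing down on $\max(F_\alpha\cap\delta_\alpha)$ and then using countability, I will thin to a stationary set on which:
\begin{itemize}
\item $F_\alpha\cap\delta_\alpha$ equals a fixed finite root $R$;
\item the finite function $\zeta\mapsto(b_k^\alpha(\zeta):k<n)$ on $R$ is fixed;
\item hence $F_\alpha\setminus R\subseteq[\delta_\alpha,h_\alpha)$.
\end{itemize}

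\emph{Step 2 (meets below $\delta_\alpha$, via non-Suslin).} Consider the elements $b^\alpha_0\res\delta_\alpha$, each of height $\delta_\alpha$. If uncountably many of them coincided, they would form an uncountable chain in $T$, which is impossible, so after thinning they are distinct. They then form an uncountable subset of $T$, and since $T$ is non-Suslin I can pass to an uncountable $Y$ on which they form an antichain. For $\alpha<\beta$ in $Y$, let $\xi=\Delta(b^\alpha_0,b^\beta_0)$. Then $b_0^\alpha$ and $b_0^\beta$ are incomparable, and $\xi<\delta_\alpha$. I expect this step, forcing the meet strictly below $\delta_\alpha$ so that it avoids the non-root parts of $F_\alpha$ and $F_\beta$, to be the main obstacle. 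Combining non-Suslinity with the elementary-submodel thinning is exactly what handles it.

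\emph{Step 3 (all meets equal $\xi$).} Fix $\alpha<\beta$ in $Y$ and $\xi$ as above. Then $\xi\notin R$, because on $R$ the values of $b_0^\alpha$ and $b_0^\beta$ agree by the fixed pattern. Also $\xi\notin F_\alpha\setminus R$ and $\xi\notin F_\beta\setminus R$, since both of those lie above $\delta_\alpha>\xi$.

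Now compare coordinates below $\xi$. For $\zeta<\xi$ outside $R$, each $b_k^\alpha(\zeta)=b_0^\alpha(\zeta)$ and each $b_k^\beta(\zeta)=b_0^\beta(\zeta)$, and these two agree since $\zeta<\xi$. On $R\cap\xi$ the values are given by the fixed pattern, so they agree as well. Hence $b^\alpha_k\res\xi=b^\beta_k\res\xi$ for every $k<n$.

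At $\xi$ itself, since $\xi\notin F_\alpha\cup F_\beta$,
\[
b^\alpha_k(\xi)=b^\alpha_0(\xi)\neq b^\beta_0(\xi)=b^\beta_k(\xi).
\]
So for each $k$, $b^\alpha_k$ and $b^\beta_k$ are incomparable with $\Delta(b^\alpha_k,b^\beta_k)=\xi$. By the reduction at the start, the same holds for $a^\alpha_k$ and $a^\beta_k$, which gives the required $Y$.
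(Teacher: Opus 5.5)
Your argument is correct and is essentially the standard proof that the paper defers to (the arguments of Todor\v{c}evi\'{c}'s Lemmas 4.2.4--4.2.5). Coherence makes the disagreement sets $F_\alpha$ finite, pressing down freezes $F_\alpha\cap\delta_\alpha$ together with the values on it, and non-Suslinity yields an antichain whose meets lie below $\delta_\alpha$, where all coordinates split together. You are also right to strengthen the hypothesis to match the application (tuples in $T^{\otimes n}$, so that every coordinate's height tends to $\omega_1$): for merely distinct tuples in $T^n$ with $n\ge 2$, a constant first coordinate gives a counterexample.
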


For the remainder of this application, fix a coherent Aronszajn 
tree $T \subseteq {}^{\underaccent{\breve}{\omega}_1} \omega$ which is non-Suslin, and fix 
a set $X \subseteq \omega_1$.

\section{Tuples, 2} \label{Tuples, 2}

We work with injective tuples of the form 
$\vec x = (a_0,\ldots,a_{n-1})$, where 
for some $\alpha < \omega_1$, $a_0,\ldots,a_{n-1}$ are in $T_\alpha$. 
We refer to $\alpha$ as the \emph{height of $\vec x$}. 
For $\xi \le \alpha$, let $\vec x \res \xi = (a_0 \res \xi,\ldots,a_{n-1} \res \xi)$. 
We say that $\vec x$ has \emph{unique drop-downs to $\xi$} if 
$\vec x \res \xi$ is injective.

\begin{defn}
	Let $\vec x = (a_0,\ldots,a_{n-1})$ be an injective tuple 
	in $T_\alpha^n$, for some $\alpha < \omega_1$ and $0 < n < \omega$. 
	We say that \emph{$\vec x$ has meets in $X$} if for all 
	$k < l < n$, $\Delta(a_k,a_l) \in X$.  
\end{defn}

\begin{lemma} \label{meets in X drop}
	Let $\xi < \alpha < \omega_1$ and suppose that $\vec x$ is an injective tuple 
	in $T_\alpha^{<\omega}$ which has 
	unique drop-downs to $\xi$. 
	Then $\vec x$ has meets in $X$ iff $\vec x \res \xi$ has meets in $X$.
\end{lemma}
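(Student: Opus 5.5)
The plan is to show that, under the unique drop-down hypothesis, the meet heights do not change when we pass from $\vec x$ to $\vec x \res \xi$. That is, for all $k < l < n$,
\[
\Delta(a_k,a_l) = \Delta(a_k \res \xi, a_l \res \xi).
\]
Once this holds, the two conditions ``$\Delta(a_k,a_l) \in X$ for all $k<l$'' and ``$\Delta(a_k\res\xi,a_l\res\xi) \in X$ for all $k<l$'' are literally the same statement, so the equivalence follows.

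To prove the equality, write $\vec x = (a_0,\ldots,a_{n-1})$ and fix $k < l < n$. Unique drop-downs to $\xi$ means $a_k \res \xi \ne a_l \res \xi$. Both restrictions have height $\xi$, so they are distinct elements of the same level and hence incomparable in $T$, and $\Delta(a_k \res \xi, a_l \res \xi)$ is defined. Also $a_k \res \xi \le_T a_k$ and $a_l \res \xi \le_T a_l$. The observation before Lemma \ref{easy lex lemma} says that meets of incomparable elements are unchanged when we move up the tree: if $x,y$ are incomparable, $x \le_T x^+$ and $y \le_T y^+$, then $\Delta(x,y) = \Delta(x^+,y^+)$. (The paper states it with strict $<_T$, but the equality case is trivial.) Applying this with $x = a_k \res \xi$, $y = a_l \res \xi$, $x^+ = a_k$, $y^+ = a_l$ gives the displayed equality.

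If one prefers to argue directly from the definition, let $\gamma$ be the least ordinal with $(a_k\res\xi)(\gamma) \ne (a_l\res\xi)(\gamma)$. Then $\gamma < \xi$, and $a_k$ and $a_l$ agree below $\gamma$ and differ at $\gamma$, so $\Delta(a_k,a_l) = \gamma$ as well. This argument also shows that $\vec x$ is injective in the first place, which matters since ``meets in $X$'' is only defined for injective tuples.

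There is essentially no obstacle here. The only point needing care is to note that the restricted tuple is again injective, which is exactly the unique drop-down hypothesis, so that its meets are defined.
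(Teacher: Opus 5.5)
Your proof is correct and takes essentially the paper's approach. The paper states this lemma without proof, leaving it (like the analogous lemmas in Section 4) to the observation that $\Delta(x,y)=\Delta(x^+,y^+)$ for incomparable $x,y$ with $x \le_T x^+$ and $y \le_T y^+$; under unique drop-downs this gives $\Delta(a_k,a_l)=\Delta(a_k \res \xi, a_l \res \xi)$ for all $k<l$, exactly as you argue.
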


\begin{defn}
	Let $\vec x = (a_0,\ldots,a_{n-1})$ and $\vec y = (b_0,\ldots,b_{m-1})$ be 
	injective tuples in $T_\alpha^{<\omega}$ 
	for some $\alpha < \omega_1$. 
	\begin{enumerate}
		\item The tuples $\vec x$ and $\vec y$ are \emph{disjoint} 
		if $a_i \ne b_j$ for all 
		$i < n$ and $j < m$;
		\item If $\vec x$ and $\vec y$ are disjoint, then we say that they are 
		\emph{compatible} if for all $i < n$ and $j < m$, $\Delta(a_i,b_j) \in X$.
	\end{enumerate}
\end{defn}
 
Note that $\vec x$ and $\vec y$ are compatible iff there exists an injective 
tuple in $T^{<\omega}_\alpha$ with meets in $X$ 
which lists the elements of $\vec x$ and $\vec y$.

\begin{lemma} \label{compatible drop 2}
	Let $\xi < \alpha < \omega_1$. 
	Suppose that $\vec x$ and $\vec y$ are disjoint injective tuples in 
	$T_\alpha^{<\omega}$, 
	each with unique drop-downs to $\xi$, 
	such that $\vec x \res \xi$ and $\vec y \res \xi$ are disjoint. 
	Then $\vec x$ and $\vec y$ are compatible iff $\vec x \res \xi$ and 
	$\vec y \res \xi$ are compatible.
\end{lemma}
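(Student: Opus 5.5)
The plan is to show that each pair $(a_i,b_j)$ has the same meet height upstairs and downstairs, so that both compatibility conditions range over the same set of ordinals. No use of $X$ beyond the definitions is needed.

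Write $\vec x = (a_0,\ldots,a_{n-1})$ and $\vec y = (b_0,\ldots,b_{m-1})$. By the definition of compatibility, it suffices to show that for every $i < n$ and $j < m$,
\[
\Delta(a_i,b_j) = \Delta(a_i \res \xi, b_j \res \xi).
\]
We also need $\vec x \res \xi$ and $\vec y \res \xi$ to be disjoint injective tuples. Injectivity is exactly the assumption that both tuples have unique drop-downs to $\xi$, and disjointness is assumed outright. Hence compatibility of $\vec x \res \xi$ and $\vec y \res \xi$ is a meaningful notion.

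Fix $i$ and $j$. Since $\vec x \res \xi$ and $\vec y \res \xi$ are disjoint, $a_i \res \xi \neq b_j \res \xi$. These two elements lie on the same level $T_\xi$, so they are distinct and hence incomparable in $T$. Let $\gamma = \Delta(a_i \res \xi, b_j \res \xi)$, which is the least ordinal $\gamma < \xi$ with $(a_i \res \xi)(\gamma) \neq (b_j \res \xi)(\gamma)$.

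Because $a_i \res \xi$ and $b_j \res \xi$ are literally restrictions of the functions $a_i$ and $b_j$, they agree with $a_i$ and $b_j$ below $\xi$. Therefore $\gamma$ is also the least ordinal at which $a_i$ and $b_j$ differ, so $\Delta(a_i,b_j) = \gamma$. Alternatively, one can quote the remark before Lemma \ref{easy lex lemma}: if $x$ and $y$ are incomparable with $x <_T x^+$ and $y <_T y^+$, then $\Delta(x,y) = \Delta(x^+,y^+)$. One applies it with $x = a_i \res \xi$, $x^+ = a_i$, $y = b_j \res \xi$, and $y^+ = b_j$; when $\xi = \alpha$ the claim is trivial.

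The conclusion then follows immediately: $\Delta(a_i,b_j) \in X$ for all $i,j$ if and only if $\Delta(a_i \res \xi, b_j \res \xi) \in X$ for all $i,j$, which is the stated equivalence. There is no real obstacle here. The only point requiring care is the incomparability of the dropped-down elements, which is precisely what the disjointness hypothesis on $\vec x \res \xi$ and $\vec y \res \xi$ guarantees. This mirrors the proof of Lemma \ref{meets in X drop} and of Lemma \ref{compatible drop 1}.
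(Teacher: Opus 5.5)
Your proof is correct and is essentially the intended argument. The paper leaves this lemma unproved, relying on the remark before Lemma \ref{easy lex lemma}: the distinct elements $a_i \res \xi, b_j \res \xi \in T_\xi$ are incomparable, so $\Delta(a_i,b_j) = \Delta(a_i \res \xi, b_j \res \xi)$, and the two compatibility conditions coincide. The aside about $\xi = \alpha$ is unnecessary, since the lemma assumes $\xi < \alpha$.
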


Suppose that 
$\vec x = (a_0,\ldots,a_{n-1})$ and $\vec y = (b_0,\ldots,b_{n-1})$ are in $T^{\otimes n}$, 
not necessarily of the same height. 
We say that $\vec x$ and $\vec y$ are \emph{componentwise incomparable} if 
for all $k < n$, $a_k$ and $b_k$ are incomparable in $T$. 
Componentwise incomparable tuples $\vec x$ and $\vec y$ are said to have 
\emph{uniform meets} if there exists some $\gamma < \omega_1$, which we denote 
by $\Delta(\vec x,\vec y)$, such that for all $k < n$, 
$\Delta(a_k,b_k) = \gamma$. 
The tuples $\vec x$ and $\vec y$ are \emph{fully incomparable} if for all $k, m < n$, 
$a_k$ and $b_m$ are incomparable in $T$.

\section{Promises, 2} \label{Promises, 2}

The type of promises which we use for the proof of Theorem 2 is described next.

\begin{defn}[Promises, 2]
	Define $\Pcal$ to be the set of all $P$ satisfying that for some 
	$\gamma < \omega_1$ and for some $0 < n < \omega$:
	\begin{enumerate}
		\item $P$ is a non-empty downwards closed subset of the tree 
		$[T \res (\omega_1 \setminus \gamma)]^{\otimes n}$;
		\item every member of $P$ has uncountably many elements of $P$ above it;
		\item every member of $P$ has meets in $X$.
	\end{enumerate}
	In the above, $\gamma$ is called the \emph{base level of $P$} and 
	$n$ is called the \emph{dimension of $P$}. 
	The set of $P \in \Pcal$ such that $P$ has base level $\gamma$ is denoted by $\Pcal_\gamma$.
\end{defn}

\begin{prop} \label{promise existence start 2}
	For all $P \in \Pcal$ and for any $\vec x \in P$, there exists a family 
	$\{ \vec x_n : n < \omega \}$ of elements of $P$ above $\vec x$ with the same height 
	which is pairwise disjoint, pairwise compatible, and any two members of the 
	family have uniform meets.
\end{prop}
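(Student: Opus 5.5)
The plan is to mirror the proof of Proposition \ref{promise existence start 1}, using Lemma \ref{coherent trees lemma} in place of specialness plus the Countryman property. We use the standing hypothesis of Section \ref{Making U(T) an Ultrafilter} that $X \in \Ucal(T)^+$; without it the statement fails in general. This hypothesis plays the role that ``$L^*$ and $J$ are not near'' played before.

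\emph{Setup.} Write $\vec x = (a_0,\ldots,a_{n-1})$ and let $\gamma$ be its height. For each $\alpha > \gamma$, choose $\vec x_\alpha = (a_0^\alpha,\ldots,a_{n-1}^\alpha) \in P$ of height $\alpha$ above $\vec x$; this is possible since $P$ is downwards closed and every element has uncountably many successors in $P$. These tuples are distinct because their heights differ. Apply Lemma \ref{coherent trees lemma} to get an uncountable $Y$ such that for all $\alpha < \beta$ in $Y$ the tuples $\vec x_\alpha$ and $\vec x_\beta$ are componentwise incomparable with uniform meets, say $\Delta(\vec x_\alpha,\vec x_\beta) = \xi_{\alpha\beta}$. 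For $k \ne l$, $a_k^\alpha$ lies above $a_k$ and $a_l^\beta$ lies above $a_l$, and $a_k \ne a_l$ have the same height. So $a_k^\alpha$ and $a_l^\beta$ are incomparable and $\Delta(a_k^\alpha,a_l^\beta) = \Delta(a_k,a_l) \in X$, because $\vec x$ has meets in $X$. Hence any two such tuples are fully incomparable, so disjoint even after dropping $\vec x_\beta$ to height $\alpha$. Moreover, compatibility of $\vec x_\alpha$ and $\vec x_\beta$ reduces exactly to the single condition $\xi_{\alpha\beta} \in X$.

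\emph{Partition step (main obstacle).} Color $\{\alpha,\beta\} \in [Y]^2$ by $1$ if $\xi_{\alpha\beta} \in X$ and by $0$ otherwise, and apply Dushnik--Miller. Suppose there is an uncountable $0$-homogeneous $Z$. Then $A = \{a_0^\alpha : \alpha \in Z\}$ is an uncountable antichain with $\Delta(A) = \{\xi_{\alpha\beta} : \alpha<\beta \in Z\} \subseteq \omega_1 \setminus X$. This gives $\omega_1 \setminus X \in \Ucal(T)$, contradicting $X \in \Ucal(T)^+$. So there is an infinite $1$-homogeneous $Z$, and on it the tuples $\vec x_\alpha$ are pairwise compatible with uniform meets lying in $X$.

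\emph{Raising to a common height.} Fix $\delta$ above all heights of $\vec x_\alpha$ for $\alpha \in Z$. For each $\alpha \in Z$, pick $\vec x_\alpha^+ \in P$ of height $\delta$ above $\vec x_\alpha$. Meets of incomparable nodes are unchanged under passing to extensions (Lemma \ref{easy lex lemma}'s underlying fact, or Lemma \ref{compatible drop 2}). Hence the family $\{\vec x^+_\alpha : \alpha \in Z\}$ is still fully incomparable, so pairwise disjoint. It is pairwise compatible with uniform meets $\xi_{\alpha\beta}$, and it lies in $P$ above $\vec x$. Re-indexing by $\omega$ finishes the proof.
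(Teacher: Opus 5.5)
Your proposal is correct and follows essentially the same route as the paper: you choose the $\vec x_\alpha$, apply the Fundamental Lemma on Coherent Trees, use Dushnik--Miller so that an uncountable $0$-homogeneous set contradicts $X \in \Ucal(T)^+$, and then lift to a common height $\delta$. The only cosmetic difference is that you color pairs directly by whether the uniform meet $\xi_{\alpha\beta}$ lies in $X$, instead of by compatibility of $\vec x_\alpha$ with $\vec x_\beta \res \alpha$. The two colorings are equivalent, since the cross meets $\Delta(a_k^\alpha,a_l^\beta)=\Delta(a_k,a_l)$ for $k\ne l$ are automatically in $X$.
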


\begin{proof}
	Write $\vec x = (a_0,\dots,a_{n-1})$. 
	For each $\alpha < \omega_1$ greater than the height of $\vec x$, 
	pick some $\vec x_\alpha \in P$ above $\vec x$ with height $\alpha$. 
	Write each $\vec x_\alpha$ as $(a_0^\alpha,\ldots,a_{n-1}^\alpha)$. 
	Since $\vec x$ is injective and $a_k <_T a_k^\alpha$ 
	for all $\alpha$ and for all $k < n$, 
	it follows that for all distinct $k, l < n$ and for all $\alpha \le \beta$, 
	$a_k^\alpha$ and $a_l^\beta$ are incomparable in $T$. 
	Applying Lemma \ref{coherent trees lemma}, 
	fix an uncountable set $Y \subseteq \omega_1$ 
	so that for all $\alpha < \beta$ in $Y$, $\vec x_\alpha$ and $\vec x_\beta$ 
	are componentwise incomparable and 
	$\vec x_\alpha$ and $\vec x_\beta$ have uniform meets. 
	Then for all $\alpha < \beta$ in $Y$, $\vec x_\alpha$ and $\vec x_\beta$ 
	are fully incomparable, and hence $\vec x_\alpha$ and $\vec x_\beta \res \alpha$ 
	are disjoint and have uniform meets.

	Define a function $F : [Y]^2 \to 2$ so that for all $\alpha < \beta$ in $Y$, 
	$F(\{ \alpha, \beta \})$ is equal to $1$ if 
	$\vec x_\alpha$ and $\vec x_\beta \res \alpha$ are compatible, and $0$ otherwise. 
	By the Dushnik-Miller theorem, there exists a set $Z \subseteq Y$ such that 
	either (a) $Z$ is infinite and for all $\alpha < \beta$ in $Z$, 
	$F(\{ \alpha, \beta \}) = 1$, 
	or else (b) $Z$ is uncountable and for all $\alpha < \beta$ in $Z$, 
	$F(\{ \alpha,\beta \}) = 0$. 

	Suppose for a contradiction that (b) holds. 
	Consider any $\alpha < \beta$ in $Z$. 
	Since $F(\{ \alpha,\beta \}) = 0$, $\vec x_\alpha$ and $\vec x_\beta \res \alpha$ 
	are not compatible. 
	But for all distinct $k, m < n$, 
	$\Delta(a_k^\alpha,a_m^\beta) = \Delta(a_k,a_m) \in X$. 
	So there must exist some $k < n$ such that $\Delta(a^\alpha_k,a^\beta_k) \notin X$. 
	Since $\vec x_\alpha$ and $\vec x_\beta$ have uniform meets, 
	$\Delta(a^\alpha_0,a^\beta_0) = \Delta(a^\alpha_k,a^\beta_k) \notin X$. 
	To summarize, for all $\alpha < \beta$ in $Z$, 
	$\Delta(a^\alpha_0,a^\beta_0) \notin X$. 
	So $A = \{ a^\alpha_0 : \alpha \in Z \}$ is an uncountable antichain 
	of $T$ such that $\Delta (A) \subseteq \omega_1 \setminus X$. 
	By the definition of $\Ucal(T)$, it follows that $\omega_1 \setminus X \in \Ucal(T)$, 
	which contradicts that $X \in \Ucal(T)^+$.

	So (a) holds. 
	Pick some $\delta \in E$ greater than the height of 
	$\vec x_\alpha$ for all $\alpha \in Z$. 
	For each $\alpha \in Z$, choose some $\vec x_\alpha^+ \in P$ 
	with height $\delta$ which is above $\vec x_\alpha$. 
	Now for all $\alpha < \beta$ in $Z$, $\vec x_\alpha$ and 
	$\vec x_\beta \res \alpha$ are disjoint, compatible, and have uniform meets. 
	By Lemma \ref{compatible drop 2}, $\vec x_\alpha^+$ and $\vec x_\beta^+$ 
	are disjoint, compatible, and have uniform meets.
\end{proof}

\section{The Working Part, 2} \label{The Working Part, 2}

The forcing $\p(T,X)$ adds the desired subtree by countable approximations. 
The next definition describes these objects.

\begin{defn}
	Define $\Fcal$ to be the set of functions of the form 
	$f : T \res (\alpha+1) \to 2$, for some $\alpha < \omega_1$, satisfying:
	\begin{enumerate}
		\item $f(\emptyset) = 1$;
		\item if $f(x) = 1$, then there exists $y \in T_\alpha$ such that 
		$x \le_T y$ and $f(x) = 1$;
		\item for all $x, y \in T \res (\alpha+1)$:
		\begin{enumerate}
			\item if $f(y) = 1$ and $x <_T y$, then $f(x) = 1$;
			\item if $x$ and $y$ are incomparable in $T$ and $f(x) = f(y) = 1$, 
			then $\Delta(x,y) \in X$.
		\end{enumerate}
	\end{enumerate}
	For any $\alpha < \omega_1$, 
	define $\Fcal_\alpha$ to be the set of $f \in \Fcal$ such that 
	$\dom(f) = T \res (\alpha+1)$.
\end{defn}

\begin{defn}
	Suppose that $\xi \le \alpha < \omega_1$, 
	$f \in \Fcal_\alpha$, and 
	$\vec x = (a_0,\ldots,a_{n-1})$ is an injective tuple in $T_\xi^n$. 
	We say that \emph{$f$ is consistent with $\vec x$} if for all $i < n$, $f(a_i) = 1$.
\end{defn}

\begin{defn}
	Suppose that $\gamma \le \alpha < \omega_1$, 
	$f \in \Fcal_\alpha$, and $P \in \Pcal_\gamma$. 
	We say that $f$ \emph{fulfills $P$} if there exists a pairwise disjoint 
	family $\{ \vec x_n : n < \omega \}$ of members of $P$ 
	with height $\alpha$ such that for all $n$, 
	$f$ is consistent with $\vec x_n$.
\end{defn}

\section{Extension, 2} \label{Extension, 2}

The next three lemmas allows us to stretch a condition up arbitrarily high in the tree. 
Lemma \ref{successor case 2} handles the successor case and 
Lemma \ref{extension 2} handles the general case. 
We also need Lemma \ref{splitting extension 2} in order to add promises to a condition 
(Proposition \ref{adding promises 2}).

\begin{lemma} \label{successor case 2}
	Assume the following:
	\begin{enumerate}
		\item $\alpha < \omega_1$ and $f \in \Fcal_\alpha$;
		\item $\Gamma \subseteq \Pcal$ is countable and $f$ fulfills every member of $\Gamma$;
		\item $\vec x$, $\vec y$, and $\vec z$ are injective tuples in 
		$T_{\alpha+1}^{<\omega}$, each with unique drop-downs to $\alpha$;
		\item either $\vec y = \vec z = \emptyset$, 
		or $\vec y$ and $\vec z$ are non-empty, disjoint, and compatible;
		\item $\vec y \res \alpha = \vec z \res \alpha$;
		\item $\vec x \res \alpha$ and $\vec y \res \alpha$ are disjoint;
		\item $f$ is consistent with $\vec x \res \alpha$ and $\vec y \res \alpha$.
	\end{enumerate}
	Then there exist $g \in \Fcal_{\alpha+1}$ such that:
	\begin{enumerate}
		\item[(a)] $f \subseteq g$;
		\item[(b)] $g$ fulfills every member of $\Gamma$;
		\item[(c)] $g$ is consistent with $\vec x$, $\vec y$, and $\vec z$.
	\end{enumerate}
\end{lemma}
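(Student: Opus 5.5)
We extend $f$ to $g \in \Fcal_{\alpha+1}$ by deciding, for each $y \in T_{\alpha+1}$, whether $g(y) = 1$. The first step mirrors Lemma \ref{extension 1}. Enumerate $\Gamma$ as $\langle P_m : m < \omega \rangle$ with each member listed infinitely often. Using that $f$ fulfills every member of $\Gamma$, choose inductively a pairwise disjoint sequence $\langle \vec w_m : m < \omega \rangle$ such that:
\begin{itemize}
\item $\vec w_m \in P_m$ has height $\alpha$ and $f$ is consistent with it;
\item $\vec w_m$ is disjoint from $\vec x \res \alpha$, from $\vec y \res \alpha$, and from all earlier $\vec w_j$.
\end{itemize}
This is possible because each $P_m$ offers infinitely many pairwise disjoint candidates, and only finitely many nodes need to be avoided. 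For each $m$, pick $\vec w_m^+ \in P_m$ of height $\alpha+1$ above $\vec w_m$. It exists since every element of $P_m$ has uncountably many extensions in $P_m$.

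Now define $g$. Put $g \res (T \res (\alpha+1)) = f$. For $y \in T_{\alpha+1}$, set $g(y) = 1$ exactly when $y$ occurs in $\vec x$, $\vec y$, $\vec z$, or some $\vec w_m^+$. Additionally, for every $u \in T_\alpha$ with $f(u) = 1$ that has no designated successor, set $g(y) = 1$ for exactly one successor $y$ of $u$. Such a successor exists because $T$ is normal, as we may assume.

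Verifying $g \in \Fcal_{\alpha+1}$:
\begin{itemize}
\item \emph{Condition (2).} Every $x$ with $f(x) = 1$ has some $u \in T_\alpha$ above it with $f(u) = 1$, and $u$ in turn has a successor with value $1$.
\item \emph{Condition (3a).} Every designated $y$ satisfies $f(y \res \alpha) = 1$. The restrictions of $\vec x$, $\vec y$, $\vec z$, $\vec w_m^+$ to $\alpha$ are all consistent with $f$, noting $\vec z \res \alpha = \vec y \res \alpha$.
\item \emph{Condition (3b).} Take incomparable $y, y'$ with $g(y) = g(y') = 1$.
  \begin{itemize}
  \item If $y \res \alpha \ne y' \res \alpha$, then $\Delta(y,y') = \Delta(y \res \alpha, y' \res \alpha) \in X$, using $f$ and the fact that incomparability drops down.
  \item If $y \res \alpha = y' \res \alpha$, then $\Delta(y,y') = \alpha$, so $\alpha$ must lie in $X$.
  \end{itemize}
\end{itemize}

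The second case is the one delicate point. Two distinct value-$1$ successors of the same node occur only when they come from $\vec y$ and $\vec z$. The $\vec w_m^+$, $\vec x$, and $\vec y$ have pairwise disjoint restrictions to $\alpha$ by the choice of the $\vec w_m$ and hypotheses (3) and (6). The extra successors were added only for nodes with no designated successor. Thus the collision is exactly $y$ a coordinate of $\vec y$ and $y'$ the corresponding coordinate of $\vec z$, and compatibility of $\vec y$ and $\vec z$ gives $\Delta(y,y') = \alpha \in X$. If $\vec y = \vec z = \emptyset$, no such pair exists. In particular, when $\vec y \neq \emptyset$ the hypotheses force $\alpha \in X$, which is why splitting is allowed there.

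Finally, $g$ fulfills each $P \in \Gamma$, witnessed by $\{\vec w_m^+ : P_m = P\}$. This family is infinite because $P$ is listed infinitely often, pairwise disjoint because the restrictions are disjoint, and consistent with $g$. Consistency of $g$ with $\vec x$, $\vec y$, $\vec z$ holds by construction.

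The main obstacle is the bookkeeping in (3b): making sure all designated successors drop down injectively except for the intended $\vec y$–$\vec z$ pairs, which is why disjointness from $\vec x \res \alpha$ and $\vec y \res \alpha$ is imposed when choosing the $\vec w_m$.
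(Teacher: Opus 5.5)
Your proposal is correct and follows the paper's proof essentially step for step. It uses the same disjoint witnesses $\vec w_m$ chosen to avoid $\vec x \res \alpha$ and $\vec y \res \alpha$, the same lifts $\vec w_m^+$, and the same node-by-node definition of $g$: one designated successor per active node, and two exactly at the $\vec y$/$\vec z$ splits, where compatibility forces $\alpha \in X$. Your explicit check that $g \in \Fcal_{\alpha+1}$ goes beyond what the paper writes out; the one case you skip in (3b), an incomparable pair with one node of height at most $\alpha$, follows from the same drop-down argument, and your appeal to normality matches the paper's implicit assumption that every node has an immediate successor.
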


\begin{proof}
	Write $\vec y = (a^0_k : k < n)$ and $\vec z = (a^1_k : k < n)$. 
	Let $\langle P_m : m < \omega \rangle$ be an enumeration of $\Gamma$ 
	so that every member appears infinitely often. 
	Since $f$ fulfills every member of $\Gamma$, we can 
	fix a pairwise disjoint sequence 
	$\langle \vec w_m : m < \omega \rangle$ satisfying that for all $m < \omega$:
	\begin{enumerate}
		\item $\vec w_m$ is in $P_m$ and has height $\alpha$;
		\item $\vec w_m$ is disjoint from $\vec x \res \alpha$ and 
		$\vec y \res \alpha$;
		\item $f$ is consistent with $\vec w_m$.
	\end{enumerate}
	For each $m < \omega$, fix $\vec w_m^+$ in $P_m$ of height $\alpha+1$ 
	which is above $\vec w_m$. 
	It suffices to define some $g \in \Fcal_{\alpha+1}$ 
	such that $f \subseteq g$ and $g$ is consistent $\vec x$, 
	$\vec y$, $\vec z$, and $\vec w_m^+$ for all $m < \omega$.
	
	To construct $g$, we consider an arbitrary $x \in T_\alpha$ and define 
	$g$ on the immediate successors of $x$. 
	If $f(x) = 0$, then define $g(y) = 0$ for every immediate successor $y$ of $x$. 
	Suppose that $f(x) = 1$. 
	We split into four mutually exclusive cases.

	\emph{Case 1:} There exists $m < \omega$ such that $x$ appears in $\vec w_m$. 
	Write $\vec w_m = (a_0,\ldots,a_{l-1})$ and $\vec w_m^+ = (a_0^+,\ldots,a_{l-1}^+)$, and 
	fix $k < l$ such that $a_k = x$. 
	Define $g(a_k^+) = 1$, and $g(y) = 0$ 
	for any other immediate successor $y$ of $x$.
	
	\emph{Case 2:} $x$ appears in $\vec x \res \alpha$. 
	Proceed similarly as in case 1.
	
	\emph{Case 3:} $x$ appears in $\vec y \res \alpha$. 
	In this case, $\vec y$ and $\vec z$ are non-empty, disjoint, and compatible. 
	In particular, since $\vec y \res \alpha = \vec z \res \alpha$, 
	$\Delta(a^0_0,a^1_0) = \alpha \in X$. 
	Fix $k < n$ such that $x = a_k^0 \res \alpha$. 
	Then also $x = a_k^1 \res \alpha$. 
	Define $g(a_k^0) = 1$, $g(a_k^1) = 1$, and $g(y) = 0$ for any other 
	immediate successor $y$ of $x$. 

	\emph{Case 4:} None of the above. In this case, arbitrarily choose some immediate 
	successor $y$ of $x$, and define $g(z) = 1$ iff $z = y$ for any immediate 
	successor $z$ of $x$.
\end{proof}

\begin{prop}[Extension, 2] \label{extension 2}
	Assume the following:
	\begin{enumerate}
		\item $\alpha \le \delta < \omega_1$ and $f \in \Fcal_\alpha$;
		\item $\Gamma \subseteq \Pcal$ is countable and $f$ fulfills every member of $\Gamma$;
		\item $\vec x$ is a tuple in 
		$T_\delta^{<\omega}$ with unique drop-downs to $\alpha$;
		\item $f$ is consistent with $\vec x \res \alpha$.
	\end{enumerate}
	Then there exists $g \in \Fcal_\delta$ such that:
	\begin{enumerate}
		\item[(a)] $f \subseteq g$;
		\item[(b)] $g$ fulfills every member of $\Gamma$;
		\item[(c)] $g$ is consistent with $\vec x$.
	\end{enumerate}
\end{prop}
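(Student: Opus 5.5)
We prove the proposition by induction on $\delta \ge \alpha$, for all $f$, $\Gamma$, and $\vec x$ simultaneously. If $\delta = \alpha$, take $g = f$.

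\emph{Successor step.} Suppose $\delta = \beta + 1$ with $\beta \ge \alpha$. The tuple $\vec x \res \beta$ has unique drop-downs to $\alpha$, and $f$ is consistent with $(\vec x \res \beta) \res \alpha$. So the induction hypothesis gives $g' \in \Fcal_\beta$ with $f \subseteq g'$ that fulfills every member of $\Gamma$ and is consistent with $\vec x \res \beta$. Now apply Lemma \ref{successor case 2} to $g'$, with $\vec y = \vec z = \emptyset$ and with $\vec x$ itself; $\vec x$ has unique drop-downs to $\beta$ because it has them to $\alpha \le \beta$. This yields the required $g \in \Fcal_\delta$.

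\emph{Limit step.} Suppose $\delta$ is a limit. Fix an increasing sequence $\langle \alpha_n : n<\omega\rangle$ cofinal in $\delta$ with $\alpha_0 = \alpha$, an enumeration $\langle P_n : n<\omega\rangle$ of $\Gamma$ in which each member appears infinitely often, and an enumeration $\langle c_n : n < \omega\rangle$ of $T_\delta$. Following the proofs of Proposition \ref{adding promises 1} and Theorem \ref{totally proper 1}, build $f_n \in \Fcal_{\alpha_n}$ and tuples $\vec z_n$ of height $\delta$, starting from $f_0 = f$ and $\vec z_0 = \vec x$. Maintain throughout that $f_n$ fulfills every member of $\Gamma$, that $\vec z_n$ has unique drop-downs to $\alpha_n$, and that $f_n$ is consistent with $\vec z_n \res \alpha_n$.

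At stage $n$, first enlarge $\alpha_{n+1}$ if necessary so that $\vec z_n$ has unique drop-downs to $\alpha_{n+1}$. Since $f_n$ fulfills $P_n$, pick $\vec w \in P_n$ of height $\alpha_n$ that is consistent with $f_n$ and disjoint from $\vec z_n \res \alpha_n$, and fix $\vec w^+ \in P_n$ of height $\delta$ above it. Enlarge $\alpha_{n+1}$ further so that $\vec z_n$ together with $\vec w^+$ has unique drop-downs to $\alpha_{n+1}$. Then apply the induction hypothesis to $f_n$, at height $\alpha_{n+1} < \delta$, with the combined tuple $(\vec z_n {}^\frown \vec w^+) \res \alpha_{n+1}$; this produces $f_{n+1}$. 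Set $\vec z_{n+1} = \vec z_n {}^\frown \vec w^+$.

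To ensure that the union leaves no branch without a top, we also handle $c_n$. If $c_n \res \alpha_{n+1}$ is already $f_{n+1}$-active, we make sure $c_n$ or some other node above it appears in some $\vec z_m$. The cleanest way is to guarantee that every node $y$ with $f_n(y) = 1$ eventually has an element of some $\vec z_m$ above it. We enumerate the countably many pairs $(m,y)$ with $y \in \dom(f_m)$ and $f_m(y)=1$. When the pair $(m,y)$ is handled, we choose a node of $T_\delta$ above some top node of $f_m$ lying above $y$. We pick this node so that its restriction is a node of level $\alpha_{n}$ on which $f_n$ equals $1$; such a node exists by clause (2) of $\Fcal$ and downward closure. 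We add the chosen node to $\vec z$, respecting unique drop-downs by choosing it above a node not yet used.

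Finally, set $g \res (T \res \delta) = \bigcup_n f_n$, and on $T_\delta$ let $g(c) = 1$ exactly when $c$ appears in some $\vec z_n$. Then check that $g \in \Fcal_\delta$. Downward closure of $g$ holds because each $f_n$ is consistent with $\vec z_n \res \alpha_n$. Clause (2) holds by the bookkeeping just described. For clause (3b), take two incomparable $1$-nodes; their meet is below some $\alpha_n$ or is witnessed by their restrictions at some $\alpha_n$, where $f_n$ already guarantees the meet lies in $X$. The meet is always $<\delta$, because $\vec z_n$ has unique drop-downs to $\alpha_{n+1}$. Fulfillment of each $P \in \Gamma$ is witnessed by the infinitely many $\vec w^+$ added for it, which are pairwise disjoint by construction. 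Consistency of $g$ with $\vec x$ holds because $\vec x = \vec z_0$.

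The main obstacle is the bookkeeping in clause (2): every $1$-node below $\delta$ must have a $1$-node in $T_\delta$ above it. This has to be arranged while keeping each $\vec z_n$ finite with unique drop-downs at each stage, so that the induction hypothesis remains applicable. Consistency with tuples never imposes a meet condition at height $\ge \alpha_n$ that is not already enforced by $f_n$, and that is what makes this work.
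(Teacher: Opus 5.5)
Your proof is correct and follows essentially the same route as the paper. It uses induction on $\delta$, handles the successor step via Lemma~\ref{successor case 2} with $\vec y = \vec z = \emptyset$, and handles the limit step by climbing a cofinal sequence while accumulating finite top-level tuples with unique drop-downs; these tuples carry pairwise disjoint witnesses $\vec w^+$ for each promise and nodes above each active node (for clause (2)), and the union is taken at the end. The differences are cosmetic: you pick $\vec w$ from $f_n$ at height $\alpha_n$ before extending rather than from $f_{n+1}$ afterwards, and your bookkeeping over pairs $(m,y)$ must be done by dovetailing (the paper's enumeration of $T \res \delta$ with infinite repetition is simpler, and your initial enumeration of $T_\delta$ is unnecessary).
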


\begin{proof}
	The proof is by induction on $\delta$. 
	The base case $\delta = \alpha$ is immediate.

	\emph{Successor case:} Assume that the statement is true for some $\xi \ge \alpha$, 
	and we prove that it is true for $\delta = \xi + 1$. 
	By the inductive hypothesis, fix $g \in \Fcal_{\xi}$ 
	such that $f \subseteq g$, 
	$g$ fulfills every member of $\Gamma$, and 
	$g$ is consistent with $\vec x \res \xi$. 
	Now apply Lemma \ref{successor case 2} (letting $\vec x_0 = \vec x_1$ 
	be the empty tuple) to fix $h \in \Fcal_{\xi+1}$ such that $g \subseteq h$, 
	$h$ fulfills every member of $\Gamma$, and 
	$h$ is consistent with $\vec x$.

	\emph{Limit case:} Let $\delta > \alpha$ be a countable limit ordinal and 
	assume that the statement holds for all $\xi$ with $\alpha \le \xi < \delta$. 
	Fix an enumeration $\langle P_n : n < \omega \rangle$ of $\Gamma$ 
	in which each member appears infinitely many times, 
	fix an enumeration $\langle c_n : n < \omega \rangle$ of $T \res \delta$ 
	in which each member appears infinitely many times, 
	and fix a strictly increasing sequence of ordinals $\langle \delta_n : n < \omega \rangle$ 
	cofinal in $\delta$ with $\delta_0 = \alpha$. 
	We define by induction $\vec x_n$ and $f_n$ satisfying the 
	following inductive hypotheses:
	\begin{itemize}
		\item $\vec x_n$ is an injective tuple in $T_\delta^{<\omega}$ 
		with meets in $X$;
		\item $\vec x_n$ has unique drop-downs to $\delta_n$;
		\item $f_n \in \Fcal_{\delta_n}$;
		\item $f_n$ is consistent with $\vec x_n \res \delta_n$;
		\item $f_n$ fulfills every member of $\Gamma$.
	\end{itemize}

	For the base case, let $\vec x_0 = \vec x$ and $f_0 = f$. 
	Now assume that $n < \omega$ and for all $m \le n$, 
	$\vec x_m$ and $f_m$ are defined as required. 
	Write $\vec x_n = (a_0,\ldots,a_{l-1})$. 
	Applying the inductive hypothesis, fix $f_{n+1} \in \Fcal_{\delta_{n+1}}$ 
	such that $f_{n} \subseteq f_{n+1}$, 
	$f_{n+1}$ fulfills every member of $\Gamma$, and 
	$f_{n+1}$ is consistent with $\vec x_n \res \delta_{n+1}$.

	We define $\vec x_{n+1}$ in two steps. 
	First, we ask whether $c_n$ has height less than or equal to $\delta_{n+1}$. 
	If not, then we are not yet ready to handle $c_n$, so we move on to step two. 
	Suppose that it does. 
	If $f_{n+1}(c_n) = 0$, then $c_n$ has been handled, 
	and we move on to step two. 
	Suppose that $f_{n+1}(c_n) = 1$. 
	Fix $d \in T_{\delta_{n+1}}$ such that $c \le_T d$ and $f_{n+1}(d) = 1$. 
	If $d <_T a_k$ for some $k < l$, then $c_n$ is already handled and we move on to step two. 
	Otherwise, fix some $e \in T_{\delta}$ above $d$, and let 
	$\vec x_n'$ enumerate the elements of $\vec x_n$ together with $e$. 
	In any of the previous cases mentioned other than the last case, 
	let $\vec x_n' = \vec x_n$.

	Now we describe step two. 
	Since $f_{n+1}$ fulfills $P_n$, we can fix some 
	$\vec w \in P_n$ of height $\delta_{n+1}$ 
	which is consistent with $f_{n+1}$ and disjoint from $\vec x_n' \res \delta_{n+1}$. 
	Fix $\vec w^+ \in P_n$ above $\vec w$ of height $\delta$. 
	Since $f_{n+1}$ is consistent with both $\vec w$ and $\vec x_n' \res \delta_{n+1}$, 
	$\vec w$ and $\vec x_n' \res \delta_{n+1}$ are compatible. 
	By Lemma \ref{compatible drop 2}, $\vec w^+$ and $\vec x_n'$ are compatible. 
	Let $\vec x_{n+1}$ be an injective tuple which lists the elements of 
	$\vec w^+$ and $\vec x_n'$. 

	This completes the induction. 
	Define $g \in \Fcal_{\delta}$ by letting $g \res (T \res \delta) = \bigcup_n f_n$ 
	and $g(y) = 1$ iff for some $n < \omega$, $y$ appears in $\vec x_n$.
\end{proof}

\begin{lemma}[Splitting Extension, 2] \label{splitting extension 2}
	Assume the following:
	\begin{enumerate}
		\item $\alpha \le \delta < \omega_1$ and $f \in \Fcal_\alpha$;
		\item $\Gamma \subseteq \Pcal$ is countable and $f$ fulfills every member of $\Gamma$;
		\item $\vec x$, $\vec y$, and $\vec z$ are injective tuples 
		in $T_\delta^{<\omega}$;
		\item $\vec x$, $\vec y$, and $\vec z$ are pairwise disjoint and pairwise compatible;
		\item $\vec y \res \alpha = \vec z \res \alpha$;
		\item $\vec x \res \alpha$ is disjoint from $\vec y \res \alpha$;
		\item $f$ is consistent with $\vec x \res \alpha$ and $\vec y \res \alpha$;
		\item $\vec y$ and $\vec z$ have uniform meets.
	\end{enumerate}
	Then there exists $h \in \Fcal_\delta$ such that:
	\begin{enumerate}
		\item[(a)] $f \subseteq h$;
		\item[(b)] $h$ fulfills every member of $\Gamma$;
		\item[(c)] $h$ is consistent with $\vec x$, $\vec y$, and $\vec z$.
	\end{enumerate}
\end{lemma}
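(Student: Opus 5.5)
The plan is to reduce to Proposition \ref{extension 2} and Lemma \ref{successor case 2} by locating the level where $\vec y$ and $\vec z$ split apart. Let $\gamma = \Delta(\vec y,\vec z)$ be the uniform meet given by hypothesis (8). Since $\vec y \res \alpha = \vec z \res \alpha$ and $\vec y, \vec z$ are disjoint, we have $\alpha \le \gamma < \delta$. Moreover $\vec y \res \gamma = \vec z \res \gamma$, while $\vec y \res (\gamma+1)$ and $\vec z \res (\gamma+1)$ are disjoint. Pairwise compatibility of $\vec y$ and $\vec z$ gives $\gamma = \Delta(a^0_0,a^1_0) \in X$.

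\textbf{Step one: extend to level $\gamma$.} Consider the tuple $\vec x \res \gamma$ concatenated with $\vec y \res \gamma$; we first check that it is injective. Elements of $\vec x$ and $\vec y$ pairwise meet below or at $\gamma$ is not guaranteed. To handle this, I would first pass to a level $\gamma' \le \gamma$ with the tuple $\vec x \frown \vec y$ having unique drop-downs to $\gamma'$ wherever possible. A cleaner approach is to apply Proposition \ref{extension 2} directly to $f$ and the tuple $\vec u$ listing the distinct elements among $\vec x \res \gamma$ and $\vec y \res \gamma$. This is legitimate because $\vec u$ has unique drop-downs to $\alpha$: by (6) and the injectivity of $\vec x \res \alpha$, $\vec y \res \alpha$, distinct elements remain distinct below. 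Hypothesis (7) gives consistency at $\alpha$. Hypothesis (4) ensures $\vec u$ has meets in $X$, since meets of elements of $\vec x \frown \vec y$ persist under restriction above them; this is needed implicitly in the construction, though not demanded by the statement. The result is $g_0 \in \Fcal_\gamma$ extending $f$, fulfilling $\Gamma$, and consistent with $\vec x \res \gamma$ and $\vec y \res \gamma = \vec z \res \gamma$. If an element of $\vec x \res \gamma$ coincides with one of $\vec y \res \gamma$, that is harmless for consistency.

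\textbf{Step two: split at $\gamma+1$.} Apply Lemma \ref{successor case 2} at $\gamma$ with $\vec y \res (\gamma+1)$ and $\vec z \res (\gamma+1)$, which are disjoint, compatible, and agree below $\gamma$. For the $\vec x$-part, take those elements of $\vec x \res (\gamma+1)$ whose restriction to $\gamma$ is not among $\vec y \res \gamma$.

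This is the main obstacle. An element of $\vec x$ may pass through the same node at level $\gamma$ as some $y_k$, so that $x \res \gamma = y_k \res \gamma$ and three successors are required at that node. Case 3 of Lemma \ref{successor case 2} only turns on two. I would handle this by modifying the proof of Lemma \ref{successor case 2}: at such a node $x$, set $g$ equal to $1$ on all required immediate successors. This preserves clause (3b) because all new meets are $\gamma \in X$. The same modification covers any coincidences with the promise witnesses $\vec w_m$, which can be chosen disjoint from $\vec x \res \gamma$ and $\vec y \res \gamma$. The other clauses are checked exactly as in that proof. This yields $g_1 \in \Fcal_{\gamma+1}$ consistent with $\vec x \res (\gamma+1)$, $\vec y \res (\gamma+1)$, and $\vec z \res (\gamma+1)$.

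\textbf{Step three: extend to $\delta$.} Apply Proposition \ref{extension 2} from $\gamma+1$ to $\delta$ with the tuple listing the elements of $\vec x$, $\vec y$, and $\vec z$. This tuple is injective, has meets in $X$ by (4), and has unique drop-downs to $\gamma+1$, though possibly not for elements of $\vec x$ versus $\vec y$. To fix that, I would instead use the first level $\gamma''$ above which all elements of $\vec x$, $\vec y$, and $\vec z$ have distinct restrictions, and repeat the split-at-meet procedure of step two at each of the finitely many meet levels between $\alpha$ and $\delta$. Each such level lies in $X$ by pairwise compatibility. Proceeding by induction on the number of distinct meet levels, alternating Proposition \ref{extension 2} between meet levels with the modified successor step at each meet level, gives the required $h \in \Fcal_\delta$.
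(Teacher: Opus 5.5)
Your plan is the paper's proof: extend by Proposition~\ref{extension 2} to $\gamma=\Delta(\vec y,\vec z)$ using the tuple that lists $\vec x\res\gamma$ and $\vec y\res\gamma$, split at $\gamma$ by Lemma~\ref{successor case 2}, and then apply Proposition~\ref{extension 2} from $\gamma+1$ to $\delta$ using the tuple that lists $\vec x$, $\vec y$, $\vec z$. The ``main obstacle'' and the multi-level induction in step three cannot arise: as your own step-one remark shows, (6), (7) and $\vec z\res\alpha=\vec y\res\alpha$ force all these elements to have distinct restrictions to $\alpha$, except the pairs $y_k,z_k$, whose meets all equal $\gamma$, so Lemma~\ref{successor case 2} applies verbatim and the combined tuple already has unique drop-downs to $\gamma+1$.
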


\begin{proof}
	Let $\gamma = \Delta(\vec y,\vec z)$. 
	Then $\alpha \le \gamma < \delta$ and $\vec y \res \gamma = \vec z \res \gamma$. 
	Since $\vec x$ and $\vec y$ are compatible, by Lemma \ref{compatible drop 2} so are 
	$\vec x \res \gamma$ and $\vec y \res \gamma$. 
	Let $\vec d$ be an injective tuple which lists the elements of 
	$\vec x \res \gamma$ and $\vec y \res \gamma$. 
	As $f$ is consistent with $\vec x \res \alpha$ and $\vec y \res \alpha$, it 
	is also consistent with $\vec d \res \alpha$. 
	By Proposition \ref{extension 2}, we can fix $g \in \Fcal_\gamma$ 
	such that $f \subseteq g$, $g$ fulfills every member of $\Gamma$, 
	and $g$ is consistent with $\vec d$. 
	It follows that $g$ is consistent with both 
	$\vec x \res \gamma$ and $\vec y \res \gamma$. 
	Because $\vec y \res \gamma = \vec z \res \gamma$, 
	$g$ is also consistent with $\vec z \res \gamma$. 

	Applying Lemma \ref{successor case 2} and the fact that 
	$\gamma = \Delta(\vec y,\vec z)$, we can find 
	$g^+ \in \Fcal_{\gamma+1}$ such that 
	$g \subseteq g^+$, $g^+$ fulfills every member of $\Gamma$, 
	and $g^+$ is consistent with $\vec x \res (\gamma+1)$, 
	$\vec y \res (\gamma+1)$, and $\vec z \res (\gamma+1)$. 
	If $\delta = \gamma+1$, then we are done. 
	Assume that $\gamma+1 < \delta$. 
	Let $\vec e$ be an injective tuple which lists the elements of 
	$\vec x$, $\vec y$, and $\vec z$. 
	Since $\vec x$, $\vec y$, and $\vec z$ are pairwise compatible, 
	$\vec e$ has meets in $X$. 
	Also, $g^+$ is consistent with $\vec e \res (\gamma+1)$. 
	Applying Proposition \ref{extension 2}, fix $h \in \Fcal_{\delta}$ such that 
	$g^+ \subseteq h$, $h$ fulfills every member of $\Gamma$, 
	and $h$ is consistent with $\vec e$. 
	Then $h$ is consistent with $\vec x$, $\vec y$, and $\vec z$.
\end{proof}

\section{Adding Promises, 2} \label{Adding Promises, 2}

In this section, we prove a result which implies 
that, under some circumstances, a promise can be added to a 
condition.

\begin{defn}
	For any $f \in \Fcal$ and $P \in \Pcal$, we say that 
	\emph{$P$ is suitable for $f$} if there exist $\alpha < \delta < \omega_1$, 
	$\langle \delta_n : n < \omega \rangle$, 
	and $\langle \vec x_s : s \in {}^{\underaccent{\breve}{\omega}} \omega \rangle$ satisfying:
	\begin{enumerate}
		\item $f \in \Fcal_\alpha$ and $P \in \Pcal_\delta$;
		\item $\langle \delta_n : n < \omega \rangle$ is strictly increasing and cofinal 
		in $\delta$ with $\delta_0 = \alpha$;
		\item for all $n < \omega$, 
		$\{ \vec x_s : s \in {}^n \omega \}$ is a 
		family of injective tuples in 
		$T_{\delta_n}^{\otimes m}$, where $m$ 
		is the dimension of $P$, which is pairwise disjoint, pairwise compatible, 
		and any two elements of the family have uniform meets;
		\item for all $s \subseteq t$ in ${}^{\underaccent{\breve}{\omega}} \omega$, 
		$\vec x_t \res \delta_{|s|} = \vec x_s$;
		\item for all $s \in {}^{\underaccent{\breve}{\omega}} \omega$, there exists 
		$\vec x_s^+ \in P \cap T_\delta^{\otimes m}$ 
		such that $\vec x_s^+ \res \delta_{|s|} = \vec x_s$;
		\item $f$ is consistent with $\vec x_\emptyset$.
	\end{enumerate}
\end{defn}

\begin{lemma}[Existence of Promises, 2] \label{existence of promises 2}
	Assume the following:
	\begin{enumerate}
		\item $\alpha < \omega_1$ and $f \in \Fcal_\alpha$,
		\item $P \in \Pcal$;
		\item there exists $\vec x \in P$ such that $\vec x$ has unique 
		drop-downs to $\alpha$ and $f$ is consistent with $\vec x \res \alpha$.
	\end{enumerate}
	Then there exists $Q \subseteq P$ in $\Pcal$ such that $Q$ is suitable for $f$.
\end{lemma}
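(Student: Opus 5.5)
The plan is to mirror the proof of Lemma \ref{existence of promises 1}, using Proposition \ref{promise existence start 2} in place of Proposition \ref{promise existence start 1}. That proposition additionally yields uniform meets, which is what clause (3) of suitability now requires.

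\emph{Inductive construction.} Let $m$ be the dimension of $P$. We build by induction a strictly increasing sequence $\langle \delta_n : n < \omega \rangle$ and tuples $\langle \vec x_s : s \in {}^{\underaccent{\breve}{\omega}} \omega \rangle$. Set $\delta_0 = \alpha$ and $\vec x_\emptyset = \vec x \res \alpha$. This tuple lies in $P$, since $P$ is downwards closed and we may assume $\alpha$ is at least the base level of $P$. It is injective because $\vec x$ has unique drop-downs to $\alpha$, and $f$ is consistent with it by hypothesis.

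Given $\delta_n$ and the family $\{ \vec x_s : s \in {}^n \omega \}$, do the following for each $s \in {}^n\omega$.
\begin{enumerate}
\item Apply Proposition \ref{promise existence start 2} to $\vec x_s \in P$. This gives a family $\{ \vec x_{s,l} : l < \omega \}$ in $P$ above $\vec x_s$, of a common height, which is pairwise disjoint, pairwise compatible, and pairwise has uniform meets.
\item Choose $\delta_{n+1} > \delta_n$ exceeding all the (countably many) heights of the tuples $\vec x_{s,l}$.
\item For each $l$, pick $\vec x_{s^\frown l} \in P$ of height $\delta_{n+1}$ above $\vec x_{s,l}$, using property (2) of promises.
\end{enumerate}

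\emph{Preservation of the properties.} Lemma \ref{compatible drop 2} shows that disjointness and compatibility of $\vec x_{s^\frown l}$ and $\vec x_{s^\frown l'}$ persist after raising $\vec x_{s,l}$ and $\vec x_{s,l'}$ to height $\delta_{n+1}$. Uniform meets also persist, since $\Delta$ of incomparable nodes is unchanged when passing to extensions. Incomparability holds because the two tuples are disjoint at the lower height and each coordinate extends the lower one. For $s \ne s'$ at the same level with different parents, everything follows from the inductive hypothesis at the level where $s$ and $s'$ branch. This is by the same drop-down argument.

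This step needs care. Clause (3) of suitability demands that \emph{any} two members of level $n$ have uniform meets. Tuples coming from different branches branch apart at the level where their indices first differ. Their meets are computed there, and uniform meets at that level transfers upward unchanged. So we prove the clause by induction, noting that $\vec x_t \res \delta_{|s|} = \vec x_s$ holds for all $s \subseteq t$ by construction.

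\emph{Defining $Q$.} Let $\delta = \sup_n \delta_n$, pick $\vec x_s^+ \in P$ of height $\delta$ above each $\vec x_s$, and set $Q = \bigcup \{ P_{\vec x_s^+} : s \in {}^{\underaccent{\breve}{\omega}} \omega \}$. Here $P_{\vec y}$ denotes the elements of $P$ comparable with $\vec y$ at heights $\ge \delta$, i.e. the cone above $\vec y$. Then $Q$ is nonempty and downwards closed in $[T \res (\omega_1 \setminus \delta)]^{\otimes m}$. Every element of $Q$ has uncountably many successors in $Q$, inherited from $P$, and every element has meets in $X$. So $Q \in \Pcal_\delta$ with $Q \subseteq P$, and the construction witnesses that $Q$ is suitable for $f$.

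The main obstacle is only bookkeeping, specifically verifying uniform meets across different branches. I expect it to be routine via Lemma \ref{compatible drop 2}-style drop-down arguments.
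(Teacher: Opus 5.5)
Your proposal is correct and is essentially the paper's own proof: apply Proposition \ref{promise existence start 2} node by node, raise the resulting tuples to a common height $\delta_{n+1}$, and take $Q$ to be the union of the cones of $P$ above the chosen $\vec x_s^+$; your explicit check that disjointness, compatibility, and uniform meets carry upward to pairs from different branches fills in a detail the paper leaves implicit. One phrase should be fixed: ``we may assume $\alpha$ is at least the base level of $P$'' is not a legitimate reduction, but it is also unnecessary, since at the first step you can apply Proposition \ref{promise existence start 2} to $\vec x$ itself (which lies in $P$), and every tuple above $\vec x$ restricts to $\vec x \res \alpha = \vec x_\emptyset$ at $\delta_0 = \alpha$.
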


\begin{proof}
	Using Proposition \ref{promise existence start 2}, construct by induction 
	sequences $\langle \delta_n : n < \omega \rangle$ and 
	$\langle \vec x_s : s \in {}^{\underaccent{\breve}{\omega}} \omega \rangle$ satisfying:
	\begin{enumerate}
		\item $\langle \delta_n : n < \omega \rangle$ is a strictly increasing sequence 
		of countable ordinals with $\delta_0 = \alpha$;
		\item for all $n < \omega$, 
		$\{ \vec x_s : s \in {}^n \omega \}$ is a family of injective tuples in 
		$T_{\delta_n}^{\otimes m}$, where $m$ 
		is the dimension of $P$, which is pairwise disjoint, 
		pairwise compatible, and any two elements of the family have uniform meets;
		\item for all $s \subseteq t$ in ${}^{\underaccent{\breve}{\omega}} \omega$, 
		$\vec x_t \res \delta_{|s|} = \vec x_s$;
		\item $\vec x_{\emptyset} = \vec x \res \alpha$.
	\end{enumerate}
	For the base case, let $\delta_0 = \alpha$ 
	and $\vec x_{\emptyset} = \vec x \res \alpha$. 
	Now let 
	$n < \omega$ and assume that $\delta_n$ and $\{ \vec x_s : s \in {}^n \omega \}$ 
	are defined as required. 
	For each $s \in {}^n \omega$, 
	apply Proposition \ref{promise existence start 2} 
	to fix a pairwise disjoint and pairwise compatible family 
	$\{ \vec x_{s,l} : l < \omega \}$ of members of $P$ above $\vec x_s$ 
	with the same height 
	such that any two elements of the family have uniform meets. 
	Now fix $\delta_{n+1} < \omega_1$ which is greater than the height of 
	$\vec x_{s,l}$ for all $s \in {}^n \omega$ and $l < \omega$. 
	Then for each such $s$ and $l$, choose $\vec x_{s^{\frown} l}$ in $P$ 
	above $\vec x_{s,l}$ with height $\delta_{n+1}$. 
	Let $\delta = \sup_n \delta_n$. 
	For each $s \in {}^{\underaccent{\breve}{\omega}} \omega$, pick some $\vec x_s^+ \in P$ with height $\delta$ 
	above $\vec x_s$. 
	Now define $P = \bigcup \{ P_{\vec x_s^+} : s \in {}^{\underaccent{\breve}{\omega}} \omega \}$.
\end{proof}

\begin{prop}[Adding Promises, 2] \label{adding promises 2}
	Assume the following:
	\begin{enumerate}
		\item $\alpha < \delta < \omega_1$;
		\item $f \in \Fcal_\alpha$ and $P \in \Pcal_\delta$;
		\item $\Gamma \subseteq \Pcal$ is countable and $f$ fulfills every member of $\Gamma$;
		\item $\vec x$ is an injective tuple in $T_\delta^{<\omega}$ with 
		unique drop-downs to $\alpha$;
		\item $P$ is suitable for $f$ as witnessed by $\langle \delta_n : n < \omega \rangle$ 
		and $\langle \vec x_s : s \in {}^{\underaccent{\breve}{\omega}} \omega \rangle$;
		\item $\vec x \res \alpha = \vec x_\emptyset$.
	\end{enumerate}
	Then there exists $g \in \Fcal_\delta$ such that:
	\begin{enumerate}
		\item[(a)] $f \subseteq g$;
		\item[(b)] $g$ is consistent with $\vec x$;
		\item[(c)] $g$ fulfills every member of $\Gamma \cup \{ P \}$.
	\end{enumerate}
\end{prop}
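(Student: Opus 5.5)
We will follow the proof of Proposition \ref{adding promises 1}, now also carrying along the tuple $\vec x$, and use Lemma \ref{splitting extension 2} in place of Lemma \ref{extension 1} at the steps where the tree $\langle \vec x_s \rangle$ branches. Fix an enumeration $\langle P_n : n < \omega \rangle$ of $\Gamma$ in which each member appears infinitely often, and an enumeration $\langle c_n : n < \omega\rangle$ of $T \res \delta$ in which each member appears infinitely often. We build by induction ordinals $\alpha_n$ (each equal to some $\delta_{q_n}$, with $\alpha_0 = \alpha$), nodes $s_n \in {}^{q_n}\omega$ forming a $\subseteq$-chain, functions $f_n \in \Fcal_{\alpha_n}$, and injective tuples $\vec z_n \in T_\delta^{<\omega}$ with meets in $X$. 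The inductive hypotheses are:
\begin{itemize}
\item $\vec z_n$ has unique drop-downs to $\alpha_n$ and contains $\vec x$;
\item $\vec z_n \res \alpha_n$ and $\vec x_{s_n}$ are disjoint and compatible, except at $n=0$, where $\vec z_0 = \vec x$ and $\vec z_0 \res \alpha = \vec x_\emptyset$;
\item $f_n \supseteq f$, $f_n$ fulfills all of $\Gamma$, and $f_n$ is consistent with $\vec z_n \res \alpha_n$ and with $\vec x_{s_n}$.
\end{itemize}
At the end we set $g = \bigcup_n f_n$ below $\delta$, and on $T_\delta$ we put $g(y)=1$ iff $y$ appears in some $\vec z_n$.

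For the inductive step, choose $\alpha_{n+1} = \delta_q$ with $q > q_n$ large enough that $\vec z_n$ (together with $c_n$, if relevant) has unique drop-downs to $\alpha_{n+1}$. Pick two distinct successors $s_{n,0}, s_{n,1} \in {}^q\omega$ of $s_n$ that differ already at coordinate $q_n$ and whose $\vec x_{s_{n,i}}$ avoid $\vec z_n \res \alpha_{n+1}$. This is possible since the level families are pairwise disjoint and there are infinitely many branches.

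We then want $f_{n+1} \in \Fcal_{\alpha_{n+1}}$ extending $f_n$, fulfilling $\Gamma$, and consistent with $\vec z_n\res\alpha_{n+1}$, $\vec x_{s_{n,0}}$ and $\vec x_{s_{n,1}}$. We get it from Lemma \ref{splitting extension 2} applied at height $\alpha_{n+1}$ over $f_n$, with the following inputs:
\begin{itemize}
\item $\vec x := \vec z_n \res \alpha_{n+1}$ minus any coordinates lying above $\vec x_{s_n}$, which avoids double listing;
\item $\vec y := \vec x_{s_{n,0}}$ and $\vec z := \vec x_{s_{n,1}}$.
\end{itemize}
The hypotheses of that lemma hold for these reasons:
\begin{itemize}
\item $\vec y \res \alpha_n = \vec z \res \alpha_n = \vec x_{s_n}$ by clause (4) of suitability;
\item $\vec y$ and $\vec z$ are disjoint, compatible and have uniform meets, because $s_{n,0}\res(q_n+1)$ and $s_{n,1}\res(q_n+1)$ are distinct members of ${}^{q_n+1}\omega$, so clause (3) applies at that level and Lemma \ref{compatible drop 2} transfers it upward;
\item compatibility of $\vec z_n\res\alpha_{n+1}$ with each $\vec x_{s_{n,i}}$ follows by Lemma \ref{compatible drop 2} from the compatibility at $\alpha_n$.
\end{itemize}
The case $n=0$, where $\vec z_0\res\alpha = \vec x_\emptyset$ itself, is the delicate one. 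There we handle $\vec x$ by taking $\vec y = \vec x_{s_{0,0}}$ and $\vec z = \vec x_{s_{0,1}}$ with $\vec x\res\alpha_1$ placed along one of these branches. More simply, we first move $\vec x$ up to a height where it has split from some branch of the tree, via Lemma \ref{compatible drop 2} and hypothesis (6). Once $f_{n+1}$ is in hand, set $s_{n+1} = s_{n,1}$ and choose $\vec x^+_{s_{n,0}} \in P$ of height $\delta$ above $\vec x_{s_{n,0}}$.

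Next we enlarge $\vec z_n$ to $\vec z_{n+1}$, much as in Proposition \ref{extension 2}, in three steps:
\begin{enumerate}
\item Add $\vec x^+_{s_{n,0}}$ to $\vec z_n$.
\item If $c_n$ has height at most $\alpha_{n+1}$ and $f_{n+1}(c_n)=1$, take $d \in T_{\alpha_{n+1}}$ with $c_n \le_T d$ and $f_{n+1}(d)=1$. Unless $d$ is already below a member of the current tuple, add some $e \in T_\delta$ above $d$. Since $d \neq$ any restricted coordinate, choose $e$ above $d$ avoiding $\vec x_{s_{n,1}}$, which keeps compatibility by Lemma \ref{compatible drop 2}.
\item Since $f_{n+1}$ fulfills $P_n$, pick $\vec w \in P_n$ at height $\alpha_{n+1}$ consistent with $f_{n+1}$ and disjoint from everything so far, including $\vec x_{s_{n,1}}$, and add some $\vec w^+$ above it at height $\delta$.
\end{enumerate}
Each addition keeps the tuple having meets in $X$, by Lemma \ref{compatible drop 2}, because $f_{n+1}$ is consistent with all restrictions. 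It also keeps the tuple compatible with $\vec x_{s_{n+1}}$, because $f_{n+1}$ is consistent with that too. Compatibility at $\alpha_{n+1}$ then lifts, as required.

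Finally we verify that $g \in \Fcal_\delta$. Condition (2) of the definition of $\Fcal$ holds because every $c_n$ with value $1$ was pushed up to $T_\delta$ in step 2. Condition (3b) holds for pairs at height $\delta$ because each $\vec z_n$ has meets in $X$, and below $\delta$ it holds by the $f_n$. Consistency with $\vec x$ holds since $\vec x \subseteq \vec z_0$. $g$ fulfills each member of $\Gamma$ via the $\vec w^+$, and fulfills $P$ via the pairwise disjoint family $\{\vec x^+_{s_{n,0}}\}$; these are disjoint because the $s_{n,0}$ lie on distinct branches. The main obstacle I expect is the bookkeeping in the splitting step: ensuring $\vec x$ and the tuples already committed stay disjoint from, and compatible with, both new branches. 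This is exactly why the uniform-meets clause of suitability is needed, because it lets Lemma \ref{splitting extension 2} create a split of the subtree at a level $\gamma \in X$.
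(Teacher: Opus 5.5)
Your construction matches the paper's. You follow a branch $s_n$ of the suitable tree and split it into $s_{n,0},s_{n,1}$ at each stage. Lemma~\ref{splitting extension 2} makes $f_{n+1}$ consistent with both. You place $\vec x^+_{s_{n,0}}$ on the top level to fulfill $P$, and then add witnesses for $c_n$ and $P_n$. For $n>0$ your check of the hypotheses of Lemma~\ref{splitting extension 2} is fine.

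\textbf{The gap is stage $0$, i.e.\ the tuple $\vec x$.} Since $\vec x\res\alpha=\vec x_\emptyset=\vec x_{s_{0,i}}\res\alpha$, the tuples are not disjoint at level $\alpha$. So Lemma~\ref{compatible drop 2} cannot be used there, and nothing in the hypotheses puts the meet $\Delta(x_k,(\vec x_s)_k)$, a level $\ge\alpha$, into $X$. Yet $g$ must take value $1$ at $x_k$ and at the $k$-th coordinate of tuples of $P$ at height $\delta$. Your remedies do not supply this:
\begin{itemize}
\item $\vec x$ is given and cannot be ``placed along'' a branch.
\item Moving up to where $\vec x$ has split from a branch only locates the meet; it does not put it in $X$.
\item Deleting from the input of Lemma~\ref{splitting extension 2} the coordinates above $\vec x_{s_0}$ deletes all of $\vec x$. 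Then $f_1$ need not be consistent with $\vec z_0\res\alpha_1$.
\end{itemize}

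\textbf{The statement as written is false, so no argument closes this gap.} Take $T$ normal and $b$ a splitting node. Let $\alpha=\h_T(b)$, $\Gamma=\emptyset$, $m=1$, and $X=\omega_1\setminus\{\alpha\}$; this $X$ is in $\Ucal(T)^+$ because $T_{\alpha+1}$ is countable. Let $f$ be $1$ exactly on the nodes $\le_T b$, and $\vec x_\emptyset=(b)$. Let every $x_s$ with $s\neq\emptyset$ extend one immediate successor $b_0$ of $b$; their mutual meets are then $>\alpha$, hence in $X$. Let $P$ be the nodes above chosen extensions $x_s^+\in T_\delta$ of these $x_s$. Finally let $\vec x=(a)$ with $a\in T_\delta$ extending an immediate successor $b_1\neq b_0$ of $b$. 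Every member of $P\cap T_\delta$ meets $a$ at $\alpha\notin X$, so no $g\in\Fcal_\delta$ satisfies both (b) and (c).

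The paper's proof has the same flaw. At $n=0$ it applies Lemma~\ref{splitting extension 2} with $\vec z_0\res\delta_0=\vec x_\emptyset$. This violates hypothesis (6) of that lemma and leaves the compatibility in hypothesis (4) unchecked. Either of two repairs works:
\begin{itemize}
\item Drop $\vec x$ and start from $\vec z_0=\emptyset$, as in Proposition~\ref{adding promises 1}. Lemma~\ref{extending into dense sets 2} only needs $Q\in\Gamma_q$.
\item Replace (6) by ``$\vec x\res\alpha$ is disjoint from $\vec x_\emptyset$ and $f$ is consistent with $\vec x\res\alpha$.''
\end{itemize}
With either change your stage $0$ is like the later stages and your argument goes through.

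\textbf{A smaller fix is also needed, again shared with the paper.} In your step~2 the node $d$ can be a coordinate of $\vec x_{s_{n,1}}$, since $f_{n+1}$ is $1$ there. Then every $e$ above $d$ breaks the disjointness of $\vec z_{n+1}\res\alpha_{n+1}$ from $\vec x_{s_{n+1}}$, so your claim that $d$ avoids the restricted coordinates is unjustified. In that case add nothing: $d$ lies below $\vec x^+_{s_{n+1,0}}$, which enters at the next stage.
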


\begin{proof}
	Let $m$ be the dimension of $P$. 
	Fix an enumeration $\langle P_n : n < \omega \rangle$ 
	of $\Gamma$ in which each member appears infinitely many times and 
	fix an enumeration $\langle c_n : n < \omega \rangle$ of $T \res \delta$ in which 
	every element appears infinitely many times. 
	We define by induction $\vec z_n$, $s_n$, and $f_n$ for all $n < \omega$, 
	maintaining the following inductive hypotheses:
	\begin{itemize}
		\item $\vec z_n$ is an injective tuple in $T_\delta^{<\omega}$ 
		with meets in $X$ and unique drop-downs to $\delta_n$;
		\item $s_n \in {}^{n} \omega$, $\vec x_{s_n}$ has height $\delta_n$, 
		and for $n > 0$, $\vec x_{s_n}$ and $\vec z_n \res \delta_n$ are disjoint;
		\item $k < n$ implies $s_k \subseteq s_n$;
		\item $f_n \in \Fcal_{\delta_n}$;
		\item $f_n$ is consistent with $\vec z_n \res \delta_n$ and $\vec x_{s_n}$;
		\item $f_n$ fulfills every member of $\Gamma$.
	\end{itemize}

	For the base case, let $\vec z_0 = \vec x$, $s_0 = \emptyset$, 
	and $f_0 = f$. 
	Now assume that $n < \omega$ and for all $m \le n$,  
	$\vec z_m$, $s_m$, and $f_m$ are defined as required. 
	Write 
	$\vec z_n = (a_0,\ldots,a_{l-1})$. 
	Pick $s_{n,0}$ and $s_{n,1}$ in ${}^{n+1} \omega$ such that for each $i < 2$, 
	$s_n \subseteq s_{n,i}$ and $\vec x_{s_{n,i}}$ is disjoint from 
	$\vec z_n \res \delta_{n+1}$. 
	Define $s_{n+1} = s_{n,1}$.
	
	Apply Lemma \ref{splitting extension 2} to find $f_{n+1} \in \Fcal_{\delta_{n+1}}$ 
	such that $f_n \subseteq f_{n+1}$, $f_{n+1}$ fulfills every member of $\Gamma$, 
	and $f_{n+1}$ is consistent with $\vec z_n \res \delta_{n+1}$, 
	$\vec x_{s_{n,0}}$, and $\vec x_{s_{n,1}}$. 
	Fix $\vec x_{s_{n,0}}^+$ in $P \cap T_\delta^{\otimes m}$ 
	such that $\vec x_{s_{n,0}}^+ \res \delta_{n+1} = \vec x_{s_{n,0}}$. 
	Since $f_{n+1}$ is consistent with $\vec z_n \res \delta_{n+1}$ and $\vec x_{s_{n,0}}$, 
	by Lemma \ref{compatible drop 2} we have that $\vec z_n$ and $\vec x_{s_{n,0}}^+$ 
	are compatible. 
	Let $\vec z_n'$ be an injective tuple which lists the elements 
	of $\vec z_n$ and $\vec x_{s_{n,0}}^+$.

	We define $\vec z_{n+1}$ in two steps. 
	First, we ask whether $c_n$ has height less than or equal to $\delta_{n+1}$. 
	If not, then we are not yet ready to handle $c_n$, so we move on to step two. 
	Assume that it does. 
	If $f_{n+1}(c_n) = 0$, then $c_n$ has been handled, and we move on to step two. 
	Suppose that $f_{n+1}(c_n) = 1$. 
	Fix $d \in T_{\delta_{n+1}}$ with $d \ge_T c_n$ such that $f_{n+1}(d) = 1$. 
	If $d$ is below some member of $\vec z_n'$, 
	then $c_n$ is already handled and we move on to step two. 
	Otherwise, fix some $e \in T_{\delta}$ above $d$, and let 
	$\vec z_n''$ enumerate the elements of $\vec z_n'$ together with $e$. 
	In any of the cases mentioned other than the last case, let $\vec z_n'' = \vec z_n'$.
	
	Now we describe step two. 
	As $f_{n+1}$ fulfills $P_n$, we can 
	fix some $\vec w \in P_n$ of height $\delta_{n+1}$ 
	which is disjoint from $\vec z_n'' \res \delta_{n+1}$ such that 
	$f_{n+1}$ is consistent with $\vec w$. 
	Fix $\vec w^+ \in P_n$ above $\vec w$ with height $\delta$. 
	Since $f_{n+1}$ is consistent with both $\vec w$ and $\vec z_n'' \res \delta_{n+1}$, 
	$\vec w$ and $\vec z_n'' \res \delta_{n+1}$ are compatible. 
	By Lemma \ref{compatible drop 2}, $\vec w^+$ and $\vec z_n''$ are compatible. 
	Let $\vec z_{n+1}$ be an injective tuple which lists the elements of 
	$\vec w^+$ and $\vec z_n''$. 

	This completes the induction. 
	Define $g \in \Fcal_{\delta}$ by letting $g \res (T \res \delta) = \bigcup_n f_n$ 
	and $g(y) = 1$ iff for some $n < \omega$, $y$ appears in $\vec z_n$.
\end{proof}

\section{The Second Poset} \label{The Second Poset}

We are now ready to introduce and derive the main properties of 
the forcing which we use to prove Theorem 2.

\begin{defn}
	Define $\p(T,X)$ to be the forcing consisting of conditions which are 
	ordered pairs $p = (f_p,\Gamma_p)$ satisfying:
	\begin{enumerate}
		\item $f_p \in \Fcal_{\alpha_p}$ for some $\alpha_p < \omega_1$;
		\item $\Gamma_p \subseteq \Pcal$ is countable;
		\item $f_p$ fulfills every member of $\Gamma_p$.
	\end{enumerate}
	Define $q \le p$ if $f_p \subseteq f_q$ and $\Gamma_p \subseteq \Gamma_q$.
\end{defn}

The next two lemmas follow immediately from 
Propositions \ref{extension 2} and \ref{adding promises 2} respectively.

\begin{lemma}[Poset Extension, 2] \label{extension poset 2}
	Assume the following:
	\begin{enumerate}
		\item $p \in \p(T,X)$;
		\item $\alpha_p < \delta < \omega_1$;
		\item $\vec x$ is an injective tuple with height $\delta$ 
		and unique drop-downs to $\alpha_p$;
		\item $f_p$ is consistent with $\vec x \res \alpha_p$.
	\end{enumerate}
	Then there exists $q \le p$ such that $\alpha_q = \delta$ and $f_q$ 
	is consistent with $\vec x$.
\end{lemma}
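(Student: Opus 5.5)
This follows directly from Proposition \ref{extension 2}, applied to the working part of $p$ with $\Gamma_p$ as the countable family of promises. Write $p = (f_p,\Gamma_p)$ with $f_p \in \Fcal_{\alpha_p}$. By the definition of $\p(T,X)$, $\Gamma_p \subseteq \Pcal$ is countable and $f_p$ fulfills every member of $\Gamma_p$. Together with hypotheses (2)--(4) of the lemma, this gives exactly the assumptions of Proposition \ref{extension 2} with $\alpha = \alpha_p$, $f = f_p$, $\Gamma = \Gamma_p$, and the given $\delta$ and $\vec x$.

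That proposition yields $g \in \Fcal_\delta$ with three properties: $f_p \subseteq g$, $g$ fulfills every member of $\Gamma_p$, and $g$ is consistent with $\vec x$. Define $q = (g,\Gamma_p)$. Then $q \in \p(T,X)$, since $g \in \Fcal_\delta$, $\Gamma_p$ is countable, and $g$ fulfills every member of $\Gamma_p$. Moreover $q \le p$, because $f_p \subseteq g$ and $\Gamma_p \subseteq \Gamma_p$. Finally $\alpha_q = \delta$, as $\dom(g) = T \res (\delta+1)$, and $f_q = g$ is consistent with $\vec x$.

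There is essentially no obstacle here. All the real work, including the successor and limit stages and keeping the promises in $\Gamma_p$ fulfilled through limit levels, is already done in Proposition \ref{extension 2}.

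The only point to check is a formality. The statement of the lemma does not say that $\vec x$ is injective or has meets in $X$, while the limit case of Proposition \ref{extension 2} uses that the tuples have meets in $X$. The hypothesis that $f_p$ is consistent with $\vec x \res \alpha_p$ handles this. Since $\vec x$ has unique drop-downs to $\alpha_p$, the tuple $\vec x \res \alpha_p$ is injective. By clause (3b) of the definition of $\Fcal$, any two of its entries have meet in $X$, so $\vec x \res \alpha_p$ has meets in $X$. Lemma \ref{meets in X drop} then shows that $\vec x$ itself has meets in $X$. So $\vec x$ is a legitimate input to Proposition \ref{extension 2}, and the lemma follows.
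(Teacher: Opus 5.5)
Your proposal is correct and matches the paper's argument exactly: the paper simply says the lemma follows immediately from Proposition~\ref{extension 2}, taking $q = (g,\Gamma_p)$. Your extra check that $\vec x$ has meets in $X$, via clause (3b) of the definition of $\Fcal$ and Lemma~\ref{meets in X drop}, is a valid justification of a point the proof of that proposition uses implicitly. One small misreading: hypothesis (3) of the lemma does state that $\vec x$ is injective.
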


It follows that, assuming $\p(T,X)$ preserves $\omega_1$, 
any generic filter on $\p(T,X)$ yields a subtree which witnesses that 
$X \in \Ucal(T)$, as described in 
Section \ref{Making U(T) an Ultrafilter}.

\begin{lemma}[Poset Adding Promises, 2] \label{adding promises to conditions 2}
	Assume the following:
	\begin{enumerate}
		\item $p \in \p(T,X)$;
		\item $\alpha_p < \delta < \omega_1$;
		\item $P \in \Pcal_\delta$;
		\item $\vec x$ is an injective tuple of height $\delta$ with 
		unique drop-downs to $\alpha_p$;
		\item $f_p$ is consistent with $\vec x \res \alpha_p$;
		\item $P$ is suitable for $f$ with witness $\{ \vec x_s : s \in {}^{\underaccent{\breve}{\omega}} \omega \}$;
		\item $\vec x \res \alpha_p = \vec x_{\emptyset}$.
	\end{enumerate}
	Then there exists $q \le p$ such that $\alpha_q = \delta$, 
	$f_q$ is consistent with $\vec x$, and $P \in \Gamma_q$.
\end{lemma}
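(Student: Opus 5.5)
This lemma is essentially a repackaging of Proposition \ref{adding promises 2} at the level of conditions, so the plan is to apply that proposition to the working part and promise set of $p$ and then assemble the new condition. Concretely, I set $\alpha = \alpha_p$, $f = f_p \in \Fcal_{\alpha_p}$ and $\Gamma = \Gamma_p$. Since $p \in \p(T,X)$, $\Gamma_p$ is countable and $f_p$ fulfills every member of $\Gamma_p$; this gives hypotheses (1)--(3) of Proposition \ref{adding promises 2}, with $\alpha_p < \delta$ and $P \in \Pcal_\delta$.

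For the remaining hypotheses, $\vec x$ is an injective tuple of height $\delta$ with unique drop-downs to $\alpha_p$, so $\vec x \in T_\delta^{<\omega}$ as required in (4). By assumption $P$ is suitable for $f_p$, as witnessed by some $\langle \delta_n : n<\omega\rangle$ and the given $\{\vec x_s\}$. The definition of suitability forces $\delta_0 = \alpha_p$ and makes the same $\delta$ the base level of $P$; since $P \in \Pcal_\delta$, this $\delta$ coincides with the one in the lemma. Together with $\vec x \res \alpha_p = \vec x_\emptyset$, this supplies (5) and (6). Hypothesis (5) of the lemma, that $f_p$ is consistent with $\vec x \res \alpha_p$, is then automatic from clause (6) of suitability, but it does no harm.

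Proposition \ref{adding promises 2} then yields $g \in \Fcal_\delta$ with $f_p \subseteq g$, $g$ consistent with $\vec x$, and $g$ fulfilling every member of $\Gamma_p \cup \{P\}$. I define $q = (g, \Gamma_p \cup \{P\})$. The set $\Gamma_q$ is countable, and $f_q = g$ fulfills each member of it, so $q \in \p(T,X)$. Since $f_p \subseteq f_q$ and $\Gamma_p \subseteq \Gamma_q$, we have $q \le p$. Finally, $\alpha_q = \delta$ because $g \in \Fcal_\delta$, $f_q$ is consistent with $\vec x$, and $P \in \Gamma_q$.

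I do not expect any real obstacle here. The only point requiring care is matching indices: the $\alpha$ in the suitability definition must equal $\alpha_p$, and the $\delta$ there must be the $\delta$ of the lemma. Both follow from $f_p \in \Fcal_{\alpha_p}$ and $P \in \Pcal_\delta$, since the domain of $f_p$ determines $\alpha$ and the base level of $P$ determines $\delta$.
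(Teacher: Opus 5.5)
Your proposal is correct and is exactly the paper's argument: the paper notes that this lemma follows immediately from Proposition~\ref{adding promises 2}, applied with $f = f_p$ and $\Gamma = \Gamma_p$, and then takes $q = (g, \Gamma_p \cup \{P\})$. Your check that the $\alpha$ and $\delta$ in the suitability witness coincide with $\alpha_p$ and with the base level of $P$ is the only bookkeeping needed.
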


\begin{lemma} \label{extending into dense sets 2}
	Let $\lambda \ge (2^{|\p(T,X)|})^+$ be a regular cardinal and let 
	$N \prec H(\lambda)$ be countable such that $T$, $X$, and $\p(T,X)$ are in $N$. 
	Let $\delta = N \cap \omega_1$. 
	Assume the following:
	\begin{enumerate}
		\item $p \in N \cap \p(T,X)$;
		\item $D \in N$ is a dense subset of $\p(T,X)$;
		\item $\vec x$ is an injective tuple of height $\delta$ 
		with unique drop-downs to $\alpha_p$;
		\item $f_p$ is consistent with $\vec x \res \alpha_p$.
	\end{enumerate}
	Then there exists $q \le p$ such that $q \in N \cap D$ and 
	$f_q$ is consistent with $\vec x \res \alpha_q$.
\end{lemma}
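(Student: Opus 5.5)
We argue by contradiction, mirroring the proof of Lemma \ref{extending into dense sets 1}. Assume no $q \le p$ in $N \cap D$ has $f_q$ consistent with $\vec x \res \alpha_q$. Let $n$ be the length of $\vec x$, and let $P_0$ be the set of injective tuples $\vec y$ in the cone $(T^{\otimes n})_{\vec x \res \alpha_p}$ such that $\vec y$ has meets in $X$ and, for every $q \le p$ in $D$ with $\alpha_q \le \h(\vec y)$, $f_q$ is not consistent with $\vec y \res \alpha_q$. Then $P_0$ is downwards closed above $\vec x \res \alpha_p$. Since $p, D, X, T \in N$, also $P_0 \in N$.

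First, $\vec x$ has meets in $X$. Indeed, $f_p$ is consistent with $\vec x \res \alpha_p$, which has unique drop-downs, so $\vec x \res \alpha_p$ has meets in $X$ by clause (3b) of the definition of $\Fcal$; Lemma \ref{meets in X drop} then transfers this to $\vec x$. Next, for every $\alpha_p \le \gamma < \delta$ we get $\vec x \res \gamma \in P_0$. Otherwise some $q \le p$ in $D$ with $\alpha_q \le \gamma$ is consistent with $\vec x \res \alpha_q$. Since $\vec x \res \gamma \in N$, elementarity lets us take such a $q$ in $N$, contradicting our assumption. Hence $P_0$ has elements at cofinally many levels, so it is uncountable. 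By elementarity we pick $P \in N$, an uncountable downwards closed subset of $P_0$ in which every element has uncountably many elements above it and whose root is $\vec x \res \alpha_p$ (base level $\alpha_p$). Then $P \in \Pcal$. There is one subtlety: property (3) of $\Pcal$ must hold at all levels, which is why meets in $X$ was built into $P_0$.

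Next we apply Lemma \ref{existence of promises 2} to $f_p$, $P$ and the element $\vec x \res \alpha_p \in P$. This yields $Q \subseteq P$ in $\Pcal$ which is suitable for $f_p$, with $\vec x_\emptyset = \vec x \res \alpha_p$, because $P$ has a unique element at height $\alpha_p$. Let $\delta_Q$ be its base level and pick $\vec x' = \vec x_\emptyset^+ \in Q$ of height $\delta_Q$. Then $\vec x'$ has unique drop-downs to $\alpha_p$, and $f_p$ is consistent with $\vec x' \res \alpha_p = \vec x_\emptyset$. Lemma \ref{adding promises to conditions 2} now gives $q \le p$ with $Q \in \Gamma_q$. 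Using density, take $r \le q$ in $D$. Since $f_r$ fulfills $Q$, there is $\vec y \in Q$ of height $\alpha_r$ with $f_r$ consistent with $\vec y$. Then $\vec y \in P_0$, $r \le p$, $r \in D$, $\alpha_r \le \h(\vec y)$, and $f_r$ is consistent with $\vec y \res \alpha_r = \vec y$. This contradicts the definition of $P_0$.

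The main obstacle is verifying the hypotheses of the promise machinery rather than the contradiction itself. We must check that $P$ satisfies all clauses of $\Pcal$ (in particular meets in $X$ and the uncountable-extension clause), and that the suitability witness of Lemma \ref{existence of promises 2} starts exactly at $\vec x \res \alpha_p$. Lemma \ref{existence of promises 2} depends on Proposition \ref{promise existence start 2}, which uses $X \in \Ucal(T)^+$ and non-Suslinity; we rely on the standing assumption that these hold.
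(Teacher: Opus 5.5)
Your proposal is correct and follows the paper's proof essentially step for step. You use the same set $P_0$ of tuples above $\vec x \res \alpha_p$ that are inconsistent with every $f_q$ for $q \le p$ in $D$, the same elementarity argument that $\vec x \res \gamma \in P_0$ for $\alpha_p \le \gamma < \delta$, and the same contradiction obtained from Lemma \ref{existence of promises 2}, Lemma \ref{adding promises to conditions 2}, and fulfillment of $Q$ by some $r \le q$ in $D$. Two small differences: your extra ``meets in $X$'' clause in $P_0$ is harmless but redundant, since by your own argument every tuple in the cone above $\vec x \res \alpha_p$ already has meets in $X$. Your explicit choice of $\vec x_\emptyset^+$ to feed Lemma \ref{adding promises to conditions 2} simply supplies a hypothesis the paper leaves implicit.
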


\begin{proof}
	Suppose for a contradiction that the conclusion of the lemma fails. 
	Let $n$ be the dimension of $\vec x$. 
	Define $P_0$ as the set of all 
	$\vec y \in (T^{\otimes n})_{\vec x \res \alpha_p}$ 
	such that for any $q \le p$ in $D$ with $\alpha_q$ less than 
	or equal to the height of $\vec y$, 
	$f_q$ is not consistent with $\vec y \res \alpha_q$. 
	Note that $P_0 \in N$ by elementarity, 
	and $P_0$ is a downwards closed subset of the tree 
	$(T^{\otimes n})_{\vec x \res \alpha_p}$. 
	We claim that for all $\alpha_p \le \gamma < \delta$, 
	$\vec x \res \gamma \in P_0$. 
	Otherwise, there exists $q \le p$ in $D$ such that $\alpha_q \le \gamma$ 
	and $f_q$ is consistent with $(\vec x \res \gamma) \res \alpha_q = 
	\vec x \res \alpha_q$. 
	Since $p$, $D$, $\gamma$, and $\vec x \res \gamma$ are in $N$, by elementarity 
	we may assume that $q \in N$. 
	But then the conclusion of the lemma holds, which is a contradiction. 
	It follows from the claim that $P_0$ is uncountable. 
	Let $P \in N$ be a non-empty downwards closed subset of $P_0$ such that 
	every element of $P$ has uncountably many members of $P$ above it. 
	Note that $\vec x \res \alpha_p \in P$ and $P \in \Pcal$.

	By Lemma \ref{existence of promises 2}, there exists $Q \in \Pcal$ 
	such that $Q \subseteq P$ and $Q$ is suitable for $f_p$. 
	Let $\gamma$ be the base level of $Q$. 
	Since the root of $P_0$ is $\vec x \res \alpha_p$, 
	$\vec x \res \alpha_p$ is the only element of $P$ of height $\alpha_p$. 
	Hence, $\vec x \res \alpha_p$ is equal to $\vec x_\emptyset$, where 
	$\{ \vec x_s : s \in {}^{\underaccent{\breve}{\omega}} \omega \}$ 
	witnesses that $Q$ is suitable for $f_p$. 
	By Lemma \ref{adding promises to conditions 2}, we can fix $q \le p$ 
	such that $Q \in \Gamma_q$. 
	As $D$ is dense, we can fix $r \le q$ in $D$. 
	Because $f_r$ fulfills $Q$, there exists some 
	$\vec y \in Q \cap (T_{\alpha_r})^{n}$ 
	such that $f_r$ is consistent with $\vec y$. 
	Then $\vec y \in P_0$. 
	But $r \le p$ is in $D$ and $f_r$ is consistent 
	with $\vec y$, contradicting the definition of $P_0$.
\end{proof}

\begin{thm} \label{totally proper 2}
	The forcing poset $\p(T,X)$ is totally proper.
\end{thm}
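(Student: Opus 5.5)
The plan is to follow the template of Theorem \ref{totally proper 1}: build a descending sequence of conditions inside $N$ that meets every dense set of $\p(T,X)$ lying in $N$, and then take a lower bound. Fix $\lambda$ and a countable $N \prec H(\lambda)$ containing $T$, $X$, and $\p(T,X)$, let $\delta = N \cap \omega_1$, and fix $p \in N \cap \p(T,X)$. We fix three enumerations: $\langle P_n : n < \omega \rangle$ of $N \cap \Pcal$ with each member listed infinitely often; $\langle c_n : n<\omega\rangle$ of $T \res \delta$ with each member listed infinitely often; and $\langle D_n : n<\omega\rangle$ of the dense open subsets of $\p(T,X)$ in $N$. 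By induction we build $p_n \in N$ and injective tuples $\vec x_n \in T_\delta^{<\omega}$ with meets in $X$. We maintain that $\vec x_n$ has unique drop-downs to $\alpha_{p_n}$ and that $f_{p_n}$ is consistent with $\vec x_n \res \alpha_{p_n}$. We start with $p_0 = p$ and $\vec x_0 = \emptyset$.

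At stage $n$ there are three steps.

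\emph{Step one.} Apply Lemma \ref{extending into dense sets 2} to get $p_{n+1} \le p_n$ in $N \cap D_n$ with $f_{p_{n+1}}$ consistent with $\vec x_n \res \alpha_{p_{n+1}}$. Then, working inside $N$ with Lemma \ref{extension poset 2}, push $\alpha_{p_{n+1}}$ high enough that three things hold: $\vec x_n$ has unique drop-downs to it, it is at least $\h_T(c_n)$, and it exceeds some fixed cofinal sequence's $n$-th term. The tuple $\vec x_n \res \alpha$ lies in $N$ for $\alpha < \delta$, so this extension can be taken in $N$.

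\emph{Step two.} This step handles $c_n$. If $\h_T(c_n) \le \alpha_{p_{n+1}}$ and $f_{p_{n+1}}(c_n) = 1$, pick $d \in T_{\alpha_{p_{n+1}}}$ with $c_n \le_T d$ and $f_{p_{n+1}}(d)=1$. If $d$ is not already below a member of $\vec x_n$, choose $e \in T_\delta$ above $d$ and add $e$ to form $\vec x_n'$. Meets stay in $X$ because, for any member $a$ of $\vec x_n$, the meet $\Delta(e,a) = \Delta(d, a\res\alpha_{p_{n+1}})$ is a meet of two $1$-nodes of $f_{p_{n+1}}$. This is exactly clause (3b) of the definition of $\Fcal$ together with Lemma \ref{compatible drop 2}.

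\emph{Step three.} If $P_n \in \Gamma_{p_{n+1}}$, use fulfillment to pick $\vec w \in P_n$ at height $\alpha_{p_{n+1}}$ that is disjoint from $\vec x_n' \res \alpha_{p_{n+1}}$ and consistent with $f_{p_{n+1}}$. Since $P_n \in N$ is a promise, every node of it has uncountably many extensions, so we can choose $\vec w^+ \in P_n$ of height $\delta$ above $\vec w$; this choice lies outside $N$, which is fine. Because $f_{p_{n+1}}$ is consistent with both $\vec w$ and $\vec x_n' \res \alpha_{p_{n+1}}$, these are compatible, and by Lemma \ref{compatible drop 2} so are $\vec w^+$ and $\vec x_n'$. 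Let $\vec x_{n+1}$ list the members of both.

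\emph{Lower bound.} Meeting the dense sets and the extensions in step one give $\sup_n \alpha_{p_n} = \delta$. Define $f_q$ on $T \res (\delta+1)$ as follows: on $T\res\delta$ it is $\bigcup_n f_{p_n}$, and on $T_\delta$ it takes value $1$ exactly at nodes appearing in some $\vec x_n$. Let $\Gamma_q = \bigcup_n \Gamma_{p_n}$.

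The main work is checking that $f_q \in \Fcal_\delta$; this is the analogue of Lemma \ref{assisting lemma}, done by hand.
\begin{itemize}
\item Clause (3a) holds: the $1$-nodes at level $\delta$ sit above $1$-nodes at every earlier level, by the consistency maintained in the induction.
\item Clause (3b) holds. For two nodes at level $\delta$, use meets in $X$ of the $\vec x_n$. For a node at level $\delta$ and one at a lower level, pass to restrictions at a sufficiently large $\alpha_{p_n}$ and use that $f_{p_n}$ is in $\Fcal$.
\item Clause (2) requires that every $1$-node below $\delta$ extends to a $1$-node at level $\delta$. This is exactly what step two secures, since each $c$ is enumerated infinitely often and is treated once $\alpha_{p_{n+1}} \ge \h_T(c)$.
\end{itemize}
Fulfillment of each $P \in \Gamma_q$ comes from step three. $P$ is listed infinitely often as $P_n$, and once it belongs to $\Gamma_{p_{n+1}}$ each such occurrence adds a new $\vec w^+$. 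These are pairwise disjoint because each $\vec w$ was chosen disjoint from the current $\vec x_n'$, and every $\vec w^+$ is consistent with $f_q$. Thus $q$ is a condition below every $p_n$, so it is a total master condition for $N$.

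The step I expect to need the most care is clause (2) together with (3b) at the limit level. In particular, adding $e$ in step two must not break meets in $X$ against nodes added later, and the bookkeeping of $c_n$ must cover every $1$-node of every $f_{p_n}$.
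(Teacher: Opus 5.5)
Your proposal is correct and follows the paper's proof essentially step for step. It uses the same three-step induction: meet $D_n$ via Lemma \ref{extending into dense sets 2}, extend a $1$-node above $c_n$ to level $\delta$, and add a fresh $\vec w^+$ for $P_n$ using Lemma \ref{compatible drop 2}. It then takes the same lower bound $q$, and your explicit check that $f_q \in \Fcal_\delta$ and the extra push in step one are harmless refinements of what the paper leaves implicit.
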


\begin{proof}
	Let $\lambda \ge (2^{|\p(T,X)|})^+$ be a regular cardinal and let 
	$N \prec H(\lambda)$ be countable such that $T$, $X$, and $\p(T,X)$ are in $N$. 
	Let $\delta = N \cap \omega_1$. 
	Consider $p \in N \cap \p(T,X)$. 
	We claim that there exists $q \le p$ such that $q$ is a total master condition 
	for $N$ and $\p(T,X)$. 
	Specifically, we build by induction a descending sequence of conditions 
	$\langle p_n : n < \omega \rangle$ in $N \cap \p(T,X)$ such that for every 
	dense set $D \in N$, there exists some $n$ with $p_n \in D$, and then 
	define a lower bound $q$ of the sequence.
	
	Fix the following objects:
	\begin{enumerate}
		\item an enumeration $\langle P_n : n < \omega \rangle$ of all members of $\Pcal$ 
		which lie in $N$ such that every element appears infinitely many times;
		\item an enumeration $\langle c_n : n < \omega \rangle$ of $T \res \delta$ 
		in which each member appears infinitely many times;
		\item a sequence $\langle D_n : n < \omega \rangle$ which lists all dense subsets 
		of $\p(T,X)$ which lie in $N$.
	\end{enumerate}
	We define by induction a descending sequence $\langle p_n : n < \omega \rangle$ 
	of conditions in $N \cap \p(T,X)$ together with a sequence 
	$\langle \vec x_n : n < \omega \rangle$ of injective tuples in $T_\delta^{<\omega}$ 
	as follows.
	
	For the base case, let $p_0 = p$ and let $\vec x_0$ be the empty tuple. 
	Now assume that $p_n$ and $\vec x_n$ are defined such that $\vec x_n$ has 
	unique drop-downs to $\alpha_{p_n}$. 
	We define $p_{n+1}$ and $\vec x_{n+1}$ in several steps. 
	In the first step, apply Lemma \ref{extending into dense sets 2} to find 
	$p_{n+1} \le p_n$ in $N \cap D_n$ such that 
	$f_{p_{n+1}}$ is consistent with $\vec x_n \res \alpha_{p_{n+1}}$. 
	
	In the second step, 
	we ask whether $c_n$ has height less than or equal to $\alpha_{p_{n+1}}$. 
	If not, then we are not yet ready to handle $c_n$, and we move on to step three. 
	Suppose that it does. 
	If $f_{p_{n+1}}(c_n) = 0$, then $c_n$ has been handled, so we move on to step three. 
	Suppose that $f_{n+1}(c_n) = 1$. 
	Fix $d \in T_{\alpha_{p_{n+1}}}$ such that $c \le_T d$ and $f_{p_{n+1}}(d) = 1$. 
	If $d$ is below some member of $\vec x_n$, 
	then $c_n$ has already handled and we move on to step three. 
	Otherwise, fix some $e \in T_{\delta}$ above $d$, and let 
	$\vec x_n'$ injectively enumerate the elements of $\vec x_n$ together with $e$. 
	In any of the previous cases mentioned other than the last case, 
	let $\vec x_n' = \vec x_n$.

	Now we describe step three. 
	Consider $P_n$. 
	If $P_n$ is not in $\Gamma_{p_{n+1}}$, then let $\vec x_{n+1} = \vec x_n'$. 
	Suppose that $P_n$ is in $\Gamma_{p_{n+1}}$. 
	Then $f_{p_{n+1}}$ fulfills $P_n$, so we can fix some 
	$\vec y \in P_n$ of height $\alpha_{p_{n+1}}$ 
	which is consistent with $f_{p_{n+1}}$ and disjoint from $\vec x_n' \res \alpha_{p_{n+1}}$. 
	Fix $\vec y^+ \in P_n$ above $\vec y$ of height $\delta$. 
	Since $f_{p_{n+1}}$ is consistent with both $\vec y$ and $\vec x_n' \res \alpha_{p_{n+1}}$, 
	$\vec y$ and $\vec x_n' \res \alpha_{p_{n+1}}$ are compatible. 
	By Lemma \ref{compatible drop 2}, $\vec y^+$ and $\vec x_n'$ are compatible. 
	Let $\vec x_{n+1}$ be an injective tuple which lists the elements of 
	$\vec y^+$ and $\vec x_n'$. 

	This completes the induction. 
	Now define $q$ as follows. 
	Let $f_{q} \res (T \res \delta) = \bigcup_n f_{p_n}$, and 
	for all $z \in T_\delta$, $f_q(z) = 1$ iff for some $n < \omega$, 
	$z$ appears in $\vec x_n$. 
	Let $\Gamma_q = \bigcup_n \Gamma_{p_n}$. 
	Then $q = (f_q,\Gamma_q)$ is in $\p(T,X)$ and is a lower bound of 
	$\langle p_n : n < \omega \rangle$.
\end{proof}

\section{Further Results and Open Problems}

Abraham and Shelah \cite{AS2} introduced a variation of the promise method 
which allows for the preservation of Suslin trees. 
Using this alternative version of promises, 
we can prove:

\begin{thm} \label{suslin versions}
	Each of the following statements is consistent with \textsf{CH} 
	and the existence of a Suslin tree:
	\begin{enumerate}
		\item any two Countryman lines are compatible;
		\item for any coherent Aronszajn tree 
		$T \subseteq {}^{\underaccent{\breve}{\omega}_1} \omega$ 
		which is non-Suslin, $\mathcal U(T)$ is an ultrafilter.
	\end{enumerate}
\end{thm}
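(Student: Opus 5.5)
The plan is to repeat the two consistency proofs, now starting from a ground model of \textsf{CH} and $2^{\omega_1}=\omega_2$ that also contains a Suslin tree $R$. Such a model is obtained from $\Diamond$ together with $2^{\omega_1}=\omega_2$; for instance, force $\Diamond$ over a model of $\textsf{GCH}$ and build $R$ from it. We then arrange that the whole countable support iteration preserves $R$. The framework will be the Abraham--Shelah iteration theorem from \cite{AS2}: a countable support iteration of forcings that are $(<\!\omega_1)$-proper, Dee-complete, $\omega_2$-p.i.c., and \emph{$R$-preserving} (every condition has a total master condition that is also generic for $R$) is totally proper and preserves $R$. So the task reduces to redesigning $\p(T,S)$, $\p(T,X)$, and the specializing forcings so that each is $R$-preserving. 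The specializing forcing cannot specialize $R$ itself or any tree containing a Suslin subtree. Therefore in (1) we only specialize Aronszajn trees $T$ with $T\otimes R$ non-Suslin (equivalently, $T$ does not ``absorb'' $R$), and in (2) we restrict to non-Suslin coherent $T$, which is why the hypothesis appears there.

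The first step is to modify the promises, following \cite{AS2}. A promise becomes a pair consisting of a subtree $P$ of $[(T\res E)\otimes(S\res E)]^{\otimes n}$ (respectively $[T]^{\otimes n}$) \emph{and} an element $r\in R$, or equivalently a downwards closed subtree of $P\otimes R_r$ with the uncountable-extension property. ``Fulfills'' is interpreted relative to nodes of $R$: a condition fulfills a promise if for densely many $r'$ above $r$ at level $\alpha_f$ there are infinitely many disjoint witnesses. Proposition~\ref{promise existence start 1} and Proposition~\ref{promise existence start 2} must then be reproved for these product promises. Their Dushnik--Miller step goes through unchanged once we know that in the relevant product tree uncountable sets contain the needed uncountable subsets; for this we use that $T$, $S$ are special (resp.\ non-Suslin, via Lemma~\ref{coherent trees lemma}) while $R$ is Suslin, so that $P\otimes R$ behaves well. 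Case (b) again yields an uncountable anti-isomorphism (resp.\ an antichain with meets outside $X$), and this contradicts nearness of $L^*$ and $J$ (resp.\ $X\in\Ucal(T)^+$). The Extension and Adding Promises lemmas then carry over verbatim, since they only manipulate witnesses level by level.

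The main obstacle is the analogue of Lemmas~\ref{extending into dense sets 1} and \ref{extending into dense sets 2}, where we must handle an $R$-name and a node $r\in R_\delta$ with $\delta=N\cap\omega_1$. Here the set $P_0$ of ``bad'' tuples is defined in the product with $R$: it consists of pairs $(\vec y,r')$ such that no $q\le p$ in $D$ below the relevant antichain of $R$ below $r'$ is consistent with $\vec y$. The argument that $P_0$ is uncountable uses that $(\vec x\res\gamma, r\res\gamma)\in P_0$ for all $\gamma<\delta$. We then must also use that $r$ is $N$-generic for $R$: since $R$ is Suslin, every maximal antichain of $R$ in $N$ is contained in $N$. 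This genericity is what lets us pull the witness $q$ into $N$ and still meet $R$-dense sets. With this lemma in hand, the proof of total properness proceeds as in Theorems~\ref{totally proper 1} and \ref{totally proper 2}, interleaving the enumeration of $R$-dense sets in $N$. The resulting lower bound is then $(N,R)$-generic along $r$, which gives $R$-preservation.

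Finally, the bookkeeping proceeds as before. In case (1), Lemma~\ref{augmented tree is Countryman} and the reduction lemma of Section~\ref{Compatibility of Countryman Lines} required all Aronszajn trees to be special, which now fails. We therefore re-examine the reduction: trees of Countryman lines are special in any case (a Countryman line's tree $T_0$ satisfies that $T_0$ cannot contain a Suslin subtree, since $L^2$ being a countable union of chains forces $T_0\otimes T_0$ to be special up to a countable partition). So we only specialize those trees, which never absorb $R$, and the argument of that lemma goes through.
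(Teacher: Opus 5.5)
You follow the route the paper indicates: Abraham--Shelah promises carrying a coordinate in a fixed Suslin tree $R$, a countable support iteration preserving $R$, and the observation that trees with Countryman lexicographic order are harmless. But the step you call routine is exactly where that modification has content, and as you state it, it fails. By your own definition of fulfillment, a condition must have, for each relevant node $\tau$ of $R$ at level $\alpha_f$, infinitely many pairwise disjoint witnesses attached to that same $\tau$. So the analogues of Propositions \ref{promise existence start 1} and \ref{promise existence start 2} must produce infinite pairwise compatible families whose $R$-coordinates lie on one branch. The Dushnik--Miller argument therefore needs an $\omega_1$-sequence of extensions whose $R$-coordinates are pairwise comparable.

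In $V$ no such sequence exists. $R$ has neither uncountable chains nor uncountable antichains, so ``$P\otimes R$ behaves well'' is precisely what is false. Specialness of $T$ and $S$ does not help, since a single node of $R$ is attached to only countably many tuples. The argument has to be run in $V^R$ along the generic branch, and the infinite homogeneous set pulled back to $V$ using that $R$ adds no new countable sets. But then alternative (b) produces the anti-isomorphism, respectively the antichain with meets outside $X$, only in $V^R$. It therefore contradicts only $\Vdash_R$ ``$L^*$ and $J$ are not near'', respectively $\Vdash_R$ ``$X\in\Ucal(T)^+$''. Nothing in your plan derives these from their ground-model versions, and forcing with $R$ adds new subsets of $\omega_1$. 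For the same reason the Extension and Adding Promises lemmas do not carry over verbatim. The promise must realize every extension of its $R$-coordinate, this must survive the pruning of $P_0$, and at limit levels every node of $R_\delta$ has to be served.

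Consequently the bookkeeping dichotomy has to be stated in terms of what $R$ forces. You must also handle the case in which different nodes of $R$ force different answers, which your proposal never considers. For (2) this case is fatal unless excluded. Suppose in some intermediate model $r\Vdash_R \omega_1\setminus X\in\Ucal(T)$ and $r'\Vdash_R X\in\Ucal(T)$. Then in any later model $W$ in which $R$ is still Suslin and $T$ is special, neither $X$ nor $\omega_1\setminus X$ lies in $\Ucal(T)^W$. Otherwise, forcing with $R$ over $W$ below $r$ (respectively $r'$) would put two disjoint sets into the filter $\Ucal(T)$, which remains a filter since $T$ stays coherent and special. Some extra idea is needed to rule this out. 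One option is to choose $R$ so that products of its cones are c.c.c.\ and preserve this through the iteration; then $R_r\times R_{r'}$ would add both antichains at once over a model where $T$ is special. For (1) you likewise have to say what to do when one node forces $L^*$ near $J$ and another forces $L$ near $J$.

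Two smaller points. Trees of Countryman lines are not ``special in any case''. What holds is non-Suslinity (the paper's footnote), and what you actually need, that $R$ adds no branch to such a tree, follows directly: a Countryman line stays Countryman, hence Aronszajn, after forcing with $R$. Also, $T\otimes R$ being non-Suslin is sufficient for, not equivalent to, $R$ adding no branch to $T$.
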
 

Statement (1) does not need any adjustment in the presence of a Suslin tree, since 
lexicographically ordered Aronszajn trees which are Countryman lines are already 
non-Suslin.\footnote{Since we are unable to find a proof of this fact in the literature, 
we give a brief sketch. By passing to a subtree, we may assume that our 
Aronszajn tree $T$ with lexicographical ordering $L$ 
satisfies that the set of successors of any element at any higher level 
is isomorphic to the rationals (use Lemma \ref{club lemma} and the argument 
of \cite[Ch.\ 4, Prop.\ 2]{baumgartnerdiss}). 
Given an uncountable set $A \subseteq T$, find a sequence 
$\langle (a_i,b_i) : i < \omega_1 \rangle$ such that for all $i$, 
$a_i \in A$, $a_i$ and $b_i$ 
have the same height, $a_i <_L b_i$, and $a_i$ has height less than 
$\Delta(a_j,b_j)$ for all $j > i$. 
Then for all $i < j$, if $a_i <_T a_j$, then $a_i <_T b_j$, so 
$b_j <_L b_i$ by Lemma \ref{easy lex lemma}. 
Since $L$ is Countryman, fix an uncountable set 
$X \subseteq \omega_1$ such that 
$\{ (a_i,b_i) : i \in X \}$ is a chain in $L^2$. 
Then for all $i < j$, $a_i$ and $a_j$ are incomparable in $T$, 
for otherwise $a_i <_L a_j$ but $b_j <_L b_i$.} 
The proof of Theorem \ref{suslin versions} is similar to \cite[Th.\ 8.1]{jk45} 
and will appear in the second author's PhD disseration. 

Recall Problem 1 which is stated in the introduction: 
Is it consistent with \textsf{CH} that for any Aronszajn lines $L$ and $J$, 
there exists a Countryman line $C$ such that each of $L$ and $J$ contain 
either a copy of $C$ or its converse $C^*$? 
This question can be approached in a series of steps, which we 
outline below. 
The most difficult to prove affirmatively is almost certainly Problem 2.

\begin{probnumber} \label{problem2}
	Is it consistent with \textsf{CH} that there exists an Aronszajn tree $T$ 
	which satisfies \textsf{CAT}?
\end{probnumber}

In light of \cite{linear_basis}, a natural candidate for such a tree would be a 
normal coherent Aronszajn subtree of ${}^{\underaccent{\breve}{\omega}_1} 2$. 
Recall that assuming mild forcing axioms, the existence of an Aronszajn tree satisfying 
\textsf{CAT} is equivalent to \textsf{CAT} holding for all Aronszajn trees. 
Using arguments as in \cite{jk45}, 
it seems likely that the same could be arranged in the 
\textsf{CH} context.

\begin{probnumber}
	Is it consistent with \textsf{CH} that \textsf{CAT} holds?
\end{probnumber}

\begin{probnumber}
	Is it consistent with \textsf{CH} that every Aronszajn line contains 
	a Countryman line?
\end{probnumber}

Each of these problems has a variation involving the existence of a Suslin tree, for example, 
by restricting to non-Suslin Aronszajn trees. 
In particular:

\begin{probnumber}
	Is it consistent with \textsf{CH} and the existence of a Suslin tree that 
	any two Aronszajn lines, neither of which contains a Suslin line, are compatible?
\end{probnumber}


\end{document}